  \newenvironment{proof of the claim}{\emph{ \quad  Proof of the Claim.}}{\hspace{\stretch{1}}\rule{1ex}{1ex}}
           \theoremstyle{plain}
            \newtheorem{theorem}{Theorem}[section]
            \newtheorem{lemma}[theorem]{Lemma}
            \newtheorem{corollary}[theorem]{Corollary}   
            \newtheorem{question}{Question}
            \newtheorem{proposition}[theorem]{Proposition}                                      
            \theoremstyle{definition}
            \newtheorem{assumption}[theorem]{Assumption}
            \newtheorem{procedure}[theorem]{Procedure}
            \newtheorem{construction}[theorem]{Construction}
            \newtheorem{definition}[theorem]{Definition}
            \newtheorem{remark}[theorem]{Remark}
            \newtheorem{example}[theorem]{Example}
\renewcommand{\ll }{\langle\hspace{-.7mm}\langle }
\newcommand{\rr }{\rangle\hspace{-.7mm}\rangle }
\title{Cohomology of group theoretic Dehn fillings \textrm{I}: Cohen-Lyndon type theorems}
\author{Bin Sun}
\date{}
\begin{document}
\newcommand{\Addresses}{{
  \bigskip
  \footnotesize

  Bin Sun, \textsc{Mathematic Institute, University of Oxford, Oxford, UK, OX2 6GG}\par\nopagebreak
  \textit{E-mail address}, \texttt{bin.sun@maths.ox.ac.uk}
  }}
\maketitle


\begin{abstract}
This is the first paper of two papers in a row aiming to study cohomology of group theoretic Dehn fillings. In the present paper, we prove a particular free product structure, which is termed the Cohen-Lyndon property, of Dehn filling kernels. As an application, we describe the structure of relative relation modules of Dehn fillings.

\end{abstract}


\section{Introduction}

\paragraph{1.1 Dehn surgery of $3$-manifolds.} In $3$-dimensional topology, Dehn surgery is an operation of modifying a $3$-manifold by cutting off a solid torus and then gluing it back in a different way. The Lickorish-Wallace theorem, which states that every closed connected orientable $3$-manifold can be obtained from the $3$-dimensional sphere by performing finitely many Dehn surgeries, serves as a motivation of the study of Dehn surgeries.

The second step of the surgery, called \textit{Dehn filling}, can be formalized as follows. Let $M$ be a $3$-manifold with toral boundary. Topologically distinct ways of gluing a solid torus to $M$ are parametrized by free homotopy classes of essential simple closed curves on $\partial M$, called \textit{slopes}. For a slope $s$, the Dehn filling $M(s)$ is obtained by attaching a solid torus $S^1\times D^2$ to $\partial M$ such that $\partial D^2$ is mapped to a curve of the slope $s$. The following is a special case of Thurston's hyperbolic Dehn filling theorem.
\begin{theorem}[{\cite[Theorem {[TH1]}]{thurston1983three}}]\label{thm. thurston}
Let $M$ be a compact orientable $3$-manifold with toral boundary such that $M\setminus\partial M$ admits a complete finite-volume hyperbolic structure. Then $M(s)$ is hyperbolic for all but finitely many slopes $s$.
\end{theorem}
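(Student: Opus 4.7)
The plan is to reduce the existence of hyperbolic structures on $M(s)$ to a deformation problem in the character variety of $\pi_1(M\setminus\partial M)$. For notational simplicity I assume $\partial M$ has a single torus component; the multi-cusp case is analogous and produces a multi-slope version. First I would use the given complete finite-volume hyperbolic structure on $M\setminus\partial M$ to extract a discrete faithful holonomy representation $\rho_0\colon\pi_1(M\setminus\partial M)\to PSL_2(\mathbb{C})$ whose restriction to the peripheral subgroup $\pi_1(\partial M)\cong\mathbb{Z}^2$ has image a rank-two parabolic subgroup. Fix generators $\mu,\lambda$ of $\pi_1(\partial M)$, so that slopes on $\partial M$ correspond to primitive elements $p\mu+q\lambda$.

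The main input is Thurston's local description of the character variety near $[\rho_0]$: because there is exactly one cusp, a neighborhood of $[\rho_0]$ is a smooth complex analytic curve and can be parametrized by the complex translation length $u$ of $\rho(\mu)$. The corresponding complex translation length $v$ of $\rho(\lambda)$ is then a holomorphic function of $u$ with $v(0)=0$ and derivative at $0$ equal to the cusp shape $\tau\in\mathbb{C}\setminus\mathbb{R}$. For a slope $s=p\mu+q\lambda$, the deformed representation $\rho_u$ descends to a hyperbolic structure in a neighborhood of the cusp in $M(s)$ precisely when the holomorphic equation $pu+qv(u)=2\pi\sqrt{-1}$ admits a small solution $u_{p,q}$. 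A standard implicit-function argument then shows that this equation is solvable for every $(p,q)\in\mathbb{Z}^2$ outside a finite exceptional set, since the complement of a small disk in the parameter space is hit by only finitely many integer linear combinations $pu+qv(u)$.

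The main obstacle, and the technical core of Thurston's argument, is to upgrade the incomplete hyperbolic structure given by $\rho_{p,q}$ to an honest complete finite-volume hyperbolic metric on the Dehn filled manifold $M(s)$. I would handle this by the cusp-completion construction: the incomplete end corresponding to the filled cusp completes uniquely by adjoining a short closed geodesic homotopic to the core of the attached solid torus, and the resulting Riemannian manifold is isometric to $M(s)$ equipped with a complete finite-volume hyperbolic metric. Geometric convergence $\rho_{p,q}\to\rho_0$ as $(p,q)\to\infty$ in $\mathbb{Z}^2$ then ensures that all but finitely many slopes yield hyperbolic fillings, which is the required conclusion.
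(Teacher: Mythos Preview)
The paper does not prove this theorem at all: it is merely quoted (with the citation to Thurston's notes) as the classical geometric result that motivates the group-theoretic Dehn filling machinery developed later. There is therefore no ``paper's own proof'' to compare your proposal against.

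That said, your outline is a faithful sketch of Thurston's original argument via the deformation space of hyperbolic structures and the holonomy map into the $PSL_2(\mathbb{C})$ character variety. One comment on the exposition: the sentence ``the complement of a small disk in the parameter space is hit by only finitely many integer linear combinations $pu+qv(u)$'' is not quite the right formulation of the finiteness. The actual mechanism is that, since $v'(0)=\tau\notin\mathbb{R}$, the map $(p,q)\mapsto u_{p,q}$ solving $pu+qv(u)=2\pi i$ satisfies $u_{p,q}\to 0$ as $|p|+|q|\to\infty$; hence for all sufficiently large $|p|+|q|$ the solution lies in the neighborhood where the local parametrization and the completion construction are valid, and only finitely many slopes can be exceptional. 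You should also mention that the rigidity/openness step (that representations near $\rho_0$ with the correct peripheral behavior are again discrete, faithful on the filled group, and geometrically finite) is nontrivial and is part of what Thurston established.
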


\paragraph{1.2 Group theoretic Dehn fillings.} In group theoretic settings, Dehn filling can be generalized as follows. Let $G$ be a group, let $H$ be a subgroup of $G$, and let $N$ be a normal subgroup of $H$. The \textit{group theoretic Dehn filling} associated with the data $(G,H,N)$ is the process of forming the quotient group $G/\ll N \rr$, where $\ll N \rr$ is the normal closure of $N$ in $G$. 

Under the assumptions of Theorem \ref{thm. thurston}, let $G=\pi_1(M)$. The natural map $\pi_1(\partial M)\rightarrow \pi_1(M)$ is injective. We think of $\pi_1(\partial M)$ as a subgroup of $\pi_1(M)$ and let $H=\pi_1(\partial M)$. Let $N\lhd H$ be the subgroup generated by the slope $s$. Then $G/\ll N \rr=\pi_1(M(s))$ by the Seifert-van Kampen theorem. 

Dehn filling is a fundamental tool in group theory. The solution of the virtually Haken conjecture uses Dehn fillings of hyperbolic groups \cite{agol2013virtual}. For a large number of relatively hyperbolic groups, Dehn fillings are used to prove the Farrell-Jones conjecture \cite{antolin2017farrell} and solve the isomorphism problem \cite{dahmani2018recognizing}. By considering Dehn fillings of hyperbolically embedded subgroups, \cite{dahmani2017hyperbolically} constructs purely pseudo-Anosov normal subgroups of mapping class groups. Other applications of Dehn fillings can be found in \cite{agol2016alternate,groves2016boundaries}.

In group theoretic settings, Thurston's theorem was first generalized by Osin \cite{osin2007peripheral}, and independently by Groves-Manning \cite{groves2008dehn} to Dehn fillings of peripheral subgroups of relatively hyperbolic groups. More recently, Dahmani-Guirardel-Osin \cite{dahmani2017hyperbolically} proved an analog of Thurston's theorem in the more general settings of groups with hyperbolically embedded subgroups (see Theorem \ref{thm. first part of simple DGO} below and the discussion afterwards). We discuss here some examples and refer to Section \ref{subsec.wh} for the definition. We use $H\hookrightarrow_h G$ to indicate that $H$ is a hyperbolically embedded subgroup of $G$.

\begin{example}\label{eg. 1}
If $H$ is a peripheral subgroup of a relatively hyperbolic group $G$, then $H\hookrightarrow_h G$ \cite[Proposition 2.4]{dahmani2017hyperbolically}. For example,
\begin{enumerate}
\item[(a)] if a group $G$ decomposes as a free product $G=A\ast B$, then we have $A\hookrightarrow_h G$ and $B\hookrightarrow_h G$ \cite{osin2006relatively};
\item[(b)] under the assumptions of Theorem \ref{thm. thurston}, we have $\pi_1(\partial M)\hookrightarrow_h\pi_1(M)$ \cite{bowditch2012relatively,farb1998relatively}.
\end{enumerate}
\end{example}

\begin{example}\label{eg. 2} 
Let $G$ be a group acting acylindrically on a Gromov hyperbolic space and let $g$ be a loxodromic element of $G$. Then there exists a maximal virtually cyclic subgroup $E(g)\leqslant G$ containing $g$ such that $E(g)\hookrightarrow_h G$ \cite[Corollary 2.9]{dahmani2017hyperbolically}. In particular, if $G$ is a hyperbolic group (resp. the mapping class group of a punctured closed orientable surface \cite[Theorem 2.19]{dahmani2017hyperbolically}, outer automorphism group of a finite rank non-abelian free group \cite[Theorem 2.20]{dahmani2017hyperbolically}) and $g$ is a loxodromic (resp. pseudo-Anosov, fully irreducible) element, then $E(g)\hookrightarrow_h G$.
\end{example}
Other examples of hyperbolically embedded subgroups can be found in \cite{dahmani2017hyperbolically}.

\begin{theorem}[{\cite[Theorem 2.27]{dahmani2017hyperbolically}}]\label{thm. first part of simple DGO}
Let $G$ be a group with a subgroup $H\hookrightarrow_h G$. Then there exists a finite set $\mathcal{F}\subset H\setminus\{1\}$ such that if $N\lhd H$ and $N\cap \mathcal{F}=\emptyset$, then the natural homomorphism $H/N\rightarrow G/\ll N \rr $ maps $H/N$ injectively onto a hyperbolically embedded subgroup of $G/\ll N \rr$.
\end{theorem}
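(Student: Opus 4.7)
The plan is to leverage the geometric structure afforded by the hyperbolic embedding to carry out small cancellation over the relative Cayley graph $\Gamma=\Gamma(G,X\sqcup H)$, where $X$ is a relative generating set with respect to which $\Gamma$ is $\delta$-hyperbolic and the induced relative metric $\hat d$ on $H$ is proper. First I would unpack the hypothesis $H\hookrightarrow_h G$ into this geometric form, fix such an $X$ and $\delta$, and record that closed $\hat d$-balls in $H$ are finite. This is the input that lets us replace an algebraic smallness condition on $N$ by a metric one.

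Next I would construct $\mathcal{F}$ as a ball. Using properness of $\hat d$ and standard constants coming from thin triangles in $\Gamma$, choose a radius $R=R(\delta)$ large enough so that any element $h\in H$ with $\hat d(1,h)>R$, together with all its $G$-conjugates, enjoys the separation estimates required below; then set $\mathcal{F}=\{h\in H\setminus\{1\}:\hat d(1,h)\le R\}$, which is finite by properness and independent of $N$. The hypothesis $N\cap\mathcal{F}=\emptyset$ then says every nontrivial element of $N$, and hence of every conjugate $gNg^{-1}$ measured in its own peripheral coset, has large relative displacement.

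The key step is to promote the collection $\{gNg^{-1}\}_{g\in G}$, acting on $\Gamma$ (or on a cone-off obtained by adding an apex over each coset of $H$), to a very rotating family in the sense of Dahmani--Guirardel--Osin. The content is a Greendlinger-type lemma: any loop or bigon labeled by a nontrivial word in $\ll N\rr$ must travel deep into a single peripheral coset, and distinct conjugate cosets cannot share long fellow-traveling segments --- both statements follow from hyperbolicity of $\Gamma$ and the large $\hat d$-length guaranteed by the previous step. Once the very rotating condition is verified, the general rotating-families machinery gives that the quotient space is hyperbolic, that the natural map sends each $gNg^{-1}$ isomorphically onto the stabilizer in $G/\ll N\rr$ of the image apex, and that $H/N$ embeds in $G/\ll N\rr$ with a proper relative metric, which is exactly the assertion that its image is hyperbolically embedded.

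The main obstacle I expect is quantitative: calibrating a single $R$ (and hence a single $\mathcal{F}$) so that the very rotating condition holds simultaneously for every normal subgroup $N\lhd H$ disjoint from $\mathcal{F}$, rather than choosing the threshold after seeing $N$. This requires combining the thin-triangle constants with the bounded-overlap estimates for distinct cosets of $H$ in $\Gamma$ that are built into the definition of hyperbolic embedding, and checking that these estimates are uniform in $g\in G$. Once that uniformity is secured, the deduction of injectivity of $H/N\to G/\ll N\rr$ and the hyperbolic embedding of the image from the rotating-family theorem is essentially formal.
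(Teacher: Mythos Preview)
The paper does not prove this theorem at all: it is quoted from \cite[Theorem~2.27]{dahmani2017hyperbolically} as background and used as a black box. So there is no ``paper's own proof'' to compare against. The closest the paper comes is Theorem~\ref{Dehn filling}, the weakly hyperbolically embedded version, which is again only cited, together with Remark~\ref{constant} recording that the relevant threshold can be taken to be $4D$ with $D$ the isolated-component constant from Lemma~\ref{length}.

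That said, your outline is essentially the strategy of Dahmani--Guirardel--Osin themselves: pass to a coned-off space, use properness of $\hat d$ to choose a finite bad set $\mathcal{F}$, verify that the conjugates of $N$ form a very rotating family at the apices when $N$ avoids $\mathcal{F}$, and then read off injectivity of $H/N\to G/\ll N\rr$ and hyperbolic embedding of the image from the rotating-families theorem. The present paper explicitly contrasts its own methods with this: for its main result (Theorem~\ref{simmain}) it deliberately avoids windmills and very rotating families in favor of van Kampen diagram surgery and isolated-component estimates in geodesic polygons (see the paragraph after Theorem~\ref{simmain} and the acknowledgement, which mentions an alternative rotating-families proof suggested by the referee). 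So your approach matches the original source rather than anything in this paper, and as a high-level plan it is sound; the ``main obstacle'' you flag---uniformity of the threshold in $g$---is handled in DGO precisely via the isolated-component lemma (the analogue of Lemma~\ref{length} here), which gives a constant depending only on the hyperbolicity data, not on $N$ or $g$.
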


In fact, Theorem \ref{thm. first part of simple DGO} generalizes Theorem \ref{thm. thurston}: let $M$ be as in Theorem \ref{thm. thurston}. By Example \ref{eg. 1}, for all but finitely many slopes $s$ on $\partial M$, Theorem \ref{thm. first part of simple DGO} implies that $\pi_1(M(s))$ is word-hyperbolic and \cite[Corollary 1.11]{groves2018dehn} implies that $\pi_1(M(s))$ is one-ended. The Geometrization Conjecture, proved by Perelman, then implies that $M(s)$ admits a hyperbolic structure.

\paragraph{1.3 Motivation: a question on group cohomology.}
Note that in the settings of Thurston's theorem, i.e., if $G=\pi_1(M),H=\pi_1(\partial M),$ and $M(s)$ admits a hyperbolic structure, we have 
$$H^{\ast}(G/\ll N \rr;\cdot)\cong H^{\ast}(\pi_1(M(s));\cdot),$$
which can be computed via $M(s)$. Indeed, as $M(s)$ admits a hyperbolic structure, the universal cover of $M(s)$ is $\mathbb{H}^3$, which is contractible, and thus $M(s)$ is a model of $K(G/\ll N \rr,1)$. 

However, there are no analogous methods for Dehn fillings of hyperbolically embedded subgroups. The main question motivating our research is the following.
\begin{question}\label{question. main question}
For a group $G$ with a subgroup $H\hookrightarrow_h G$ and a normal subgroup $N\lhd H$, what can be said about $H^{\ast}(G/\ll N \rr;\cdot)$?
\end{question}


In this series of two papers, we answer this question and discuss applications. The first task is to understand the structure of $\ll N \rr$, which is solved by the present paper. In the second paper \cite{sun2019cohomologyii}, we will combine structural results obtained in this paper and the Lyndon-Hochschild-Serre spectral sequence to compute $H^{\ast}(G/\ll N \rr; \cdot)$, and then we will study cohomological properties of $G/\ll N \rr$ and discuss some applications to simplicial volume and acylindrically hyperbolic groups.

\section{Main results}
\paragraph{2.1 Cohen-Lyndon type theorems for $\ll N \rr$.}
In general, $\ll N \rr$ does not need to have any particular structure. Nevertheless, it turns out that if $N$ avoids a finite set of bad elements, then $\ll N \rr$ enjoys a nice free product structure. In order to state our main results, we introduce the following terminology.

\begin{definition}
Let $G$ be a group with a subgroup $H\hookrightarrow_h G$. We say that a property $P$ holds \textit{for all sufficiently deep} normal subgroups $N\lhd H$ if there exists a finite set $\mathcal{F}\subset H\setminus\{1\}$ such that $P$ holds for all normal subgroups $N\lhd H$ with $N\cap\mathcal{F}=\emptyset$.
\end{definition}

\begin{definition}\label{simple CLp}
Let $G$ be a group with a subgroup $H$ and let $N\lhd H$. We say that the triple $(G,H,N)$ has the \textit{Cohen-Lyndon property} if there exists a left transversal $T$ of $H\ll N \rr$ in $G$ such that $\ll N \rr$ is the free product of its subgroups $N^t=tNt^{-1}$ for $t\in T$, denoted as 
$$\ll N \rr=\prod^{\ast}_{t\in T}N^t.$$
\end{definition}

The latter definition is motivated by the following result \cite[Theorem 4.1]{cohen1963free}, which was later generalized by \cite[Theorem 1.1]{edjvet1987cohen} to free products of locally indicable groups.

\begin{theorem}[Cohen-Lyndon]\label{classical CL}
Let $F$ be a free group and let $C$ be a maximal cyclic subgroup of $F$. Then for all $f\in C\setminus\{1\}$, the triple $(F,C,\langle f \rangle)$ has the Cohen-Lyndon property.
\end{theorem}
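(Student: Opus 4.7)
The plan is to realize $F$ as $\pi_1(Y)$ for $Y$ a wedge of circles labeled by a free basis $A$, and to analyze the covering $p : Y_L \to Y$ corresponding to $L = \ll f \rr$. Then $Y_L$ is a graph with $\pi_1(Y_L) = L$ and a free deck action by $F/L$. The essential technical input is the equality $C \cap L = \langle f \rangle$: because $C$ is maximal cyclic, $c$ is not a proper power, and by the Karrass--Magnus--Solitar torsion theorem for the one-relator group $F/L = \langle A \mid c^n \rangle$, the image of $c$ has order exactly $n$; hence $c^k \in L$ iff $n \mid k$.

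Using this, the lift of the loop $c^n$ to $Y_L$ starting at any vertex $Lg$ closes into an embedded cycle $\beta_g$ of combinatorial length $n \cdot |c|$. Since $c$ and $f$ commute, $\beta_g$ depends only on the double coset $LgC \in L\backslash F/C$, and a short coset-counting argument identifies $L\backslash F/C$ with $F/CL$. Choosing $T$ to be a left transversal of $CL$ in $F$, the distinct cycles are $\{\beta_t : t \in T\}$, and a basepoint computation (using a lift of a word representing $t$ as the path from the basepoint of $Y_L$ to the vertex $Lt$) identifies $\beta_t$ with the element $tft^{-1}$ of $L = \pi_1(Y_L)$.

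The main geometric step is to prove that collapsing each $\beta_t$ to a point yields a tree $\mathcal{T}$. I lift the picture to the Cayley tree $X$ of $F$: the preimage of $\beta_t$ in $X$ is an $L$-orbit of axes of conjugates of $c$. The obstacle is that distinct $F$-translates of the axis of $c$ can overlap on bounded segments of $X$; however, the maximality of $C$ (which makes $C$ self-normalizing in $F$, since in a free group $c$ is never conjugate to $c^{-1}$ and the centralizer of $c$ is cyclic) uniformly bounds these overlaps, and one can $F$-equivariantly quotient $X$ by collapsing each axis to a vertex (modifying the bounded overlap segments as necessary) to obtain a tree on which $F$ still acts; passing to the $L$-quotient gives $\mathcal{T}$. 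Given this tree structure, Bass--Serre theory---equivalently, Seifert--van Kampen applied to the graph-of-spaces decomposition $Y_L \to \mathcal{T}$ with vertex spaces the circles $\beta_t$ and contractible edge spaces---yields
\[
L = \pi_1(Y_L) = \prod^{\ast}_{t \in T} \pi_1(\beta_t) = \prod^{\ast}_{t \in T} \langle tft^{-1} \rangle,
\]
which is the desired Cohen--Lyndon decomposition. The hardest step is a rigorous execution of the $F$-equivariant axis collapse in $X$; this is exactly where maximality of $C$ is used in an essential way.
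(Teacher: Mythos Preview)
The paper does not give its own proof of this classical result; it cites \cite{cohen1963free}, and its main theorem (proved via isolated components and van Kampen diagram surgery in Section~\ref{proof}) recovers the statement only for sufficiently deep $\langle f\rangle\lhd C$, i.e.\ for $f=c^n$ with $n$ large. So there is no in-paper proof to compare against; I evaluate your argument on its own terms.

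Your argument has a genuine gap at precisely the step you flag as hardest. Distinct $F$-translates of the axis of $c$ can share \emph{edges}, not merely vertices: take $c=a^2b$ in $F(a,b)$; the axes of $c$ and $aca^{-1}$ both contain the edge from $a$ to $a^2$. Downstairs in $Y_L$ the cycles $\beta_t$ therefore share edges as well: with $c=a^2b$, $f=c$, one has $F/L\cong\mathbb Z$ and $\beta_1,\beta_a$ share the edge $a\!-\!a^2$; with $c=[a,b]$, $f=c$, the cover $Y_L$ is the standard $\mathbb Z^2$ grid and adjacent unit squares $\beta_1,\beta_a$ share an edge. Once two cycles share an edge there is no well-defined way to ``collapse each to a vertex'' and keep them distinct; a naive collapse identifies them, and since the family of axes is connected through such overlaps, everything collapses to a single point rather than a tree. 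Your phrase ``modifying the bounded overlap segments as necessary'' is exactly where the content of the theorem is hiding, and you have not supplied the modification. Bounding the overlap length (which does follow from primitivity of $c$) is a necessary ingredient but not sufficient to manufacture an $F$-tree with the correct vertex stabilizers; the actual mechanism---Cohen--Lyndon's original cancellation argument, a Magnus-hierarchy induction, or small cancellation when $n$ is large---is missing. Your use of the Karrass--Magnus--Solitar torsion theorem for $C\cap L=\langle f\rangle$ and the identification of the index set with a left transversal of $CL$ are both correct and not circular; it is the free-product decomposition itself that remains unproved.
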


Note that $F$ is hyperbolic and $f$ has infinite order. By Example \ref{eg. 2}, we have $C=E(f)\hookrightarrow_h F$ and thus the above theorem fits in the general framework of group theoretic Dehn fillings. For general hyperbolically embedded subgroups, a weak version of the Cohen-Lyndon property is given in \cite[Theorem 2.27]{dahmani2017hyperbolically}.

\begin{theorem}[{Dahmani-Guirardel-Osin}]\label{simple DGO}
Let $G$ be a group with a subgroup $H\hookrightarrow_h G$. Then for all sufficiently deep $N\lhd H$, we have
$$\ll N \rr=\prod^{\ast}_{t\in T}N^t$$
for some subset $T\subset G$.
\end{theorem}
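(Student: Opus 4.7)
The plan is to deduce the theorem from the theory of very rotating families developed by Dahmani-Guirardel-Osin. Since $H\hookrightarrow_h G$, one obtains a relative generating set $X$ of $G$ modulo $H$ such that the relative Cayley graph $\Gamma(G,X\sqcup H)$ is Gromov hyperbolic and the collection of left cosets $\{gH\}_{g\in G/H}$ has the bounded intersection property in the relative metric. Coning off each coset $gH$ to a single apex $v_{gH}$ produces a hyperbolic space $\mathcal{X}$ on which $G$ acts, and where the point-stabilizer of $v_H$ is precisely $H$.

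Next, I would isolate a finite set $\mathcal{F}\subset H\setminus\{1\}$, extracted from the hyperbolicity constant of $\mathcal{X}$ and the bounded-intersection parameters, consisting of those non-trivial elements of $H$ whose rotation around $v_H$ is too small to satisfy the rotating-family axioms. For any $N\lhd H$ with $N\cap\mathcal{F}=\emptyset$, the family $\{gNg^{-1}\}_{g\in G/H}$, with $gNg^{-1}$ fixing the apex $gv_H$, then satisfies the \emph{very rotating} condition: every non-trivial $n\in gNg^{-1}$ moves any point outside a small neighbourhood of $gv_H$ by a distance large compared to the hyperbolicity constant. This is the step where the deepness hypothesis is used in an essential way.

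The conclusion then follows from the structural theorem that the group generated by a very rotating family is the free product of the rotation subgroups indexed by a set of representatives of apex orbits. Concretely, $\ll N\rr$ decomposes as $\prod^{\ast}_{t\in T}tNt^{-1}$, where $T\subset G$ is obtained by choosing one representative in each $\ll N\rr$-orbit on the apex set $\{gv_H:g\in G\}$. The proof of this structural theorem is an iterated ping-pong / windmill argument on $\mathcal{X}$: any non-trivial word $n_1^{g_1}n_2^{g_2}\cdots n_k^{g_k}$ with consecutive apices distinct and $n_i\in N$ drags the basepoint along a quasi-geodesic through the distinct apices $g_1v_H,\ldots,g_kv_H$, so cannot represent the identity.

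The main obstacle I expect is the quantitative verification of the very rotating axioms, i.e.\ producing $\mathcal{F}$ from data intrinsic to the hyperbolic embedding. This requires converting the algebraic bounded-intersection property of $H$ into a geometric displacement lower bound on the cone-off space $\mathcal{X}$ for elements of $H\setminus\mathcal{F}$, and verifying that elements lying in distinct conjugates $gNg^{-1}$ do not accidentally combine into short relations. Once this quantitative control is established, the free-product conclusion is a formal consequence of the rotating-family ping-pong and does not require further input.
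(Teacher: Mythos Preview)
Your sketch is correct in spirit and is precisely the very-rotating-families / windmill approach that Dahmani--Guirardel--Osin use in the reference from which the paper quotes this theorem. Note, however, that the paper does not supply its own proof of this statement: Theorem~\ref{simple DGO} is cited from \cite{dahmani2017hyperbolically} as background, and the paper's work goes into the stronger Theorem~\ref{simmain} (equivalently Theorem~\ref{main}), which upgrades the index set $T$ to an honest left transversal of $H\ll N\rr$.

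It is worth pointing out that your outline, taken at face value, already yields that stronger conclusion: once $\ll N\rr$ is the free product over $\ll N\rr$-orbit representatives of apices, and the apices are indexed by $G/H$, normality of $\ll N\rr$ gives $\ll N\rr gH=gH\ll N\rr$, so your $T$ \emph{is} a left transversal of $H\ll N\rr$. The paper acknowledges this, remarking that ``it is possible to prove Theorem~\ref{simmain} with these notions'' and thanking the referee for an alternative proof along exactly these lines. The paper nevertheless chooses a genuinely different route: rather than building a cone-off space and running ping-pong, it works directly in $\Gamma(G,X\sqcup H)$, constructs explicit transversals satisfying the prefix-closed conditions (P1)--(P3), and argues by induction via surgery on van Kampen diagrams over the Dehn-filling presentation (Lemmas~\ref{cut}, \ref{bbjs}, Proposition~\ref{generate}) together with an isolated-component count in geodesic polygons (Proposition~\ref{free}). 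What your approach buys is conceptual economy once the rotating-family machinery is in place; what the paper's approach buys is a self-contained argument that avoids the cone-off construction and makes the constants (the $24D$ threshold) explicit in terms of the isolated-component lemma.
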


The main difference between Theorems \ref{simple DGO} and \ref{classical CL} is that in Theorem \ref{simple DGO}, $T$ is just some subset of $G$, instead of being a left transversal of $H \ll N \rr$ in $G$. Our result improves Theorem \ref{simple DGO}.
\begin{theorem}\label{simmain}
Suppose that $G$ is a group with a subgroup $H\hookrightarrow_h G$. Then $(G,H,N)$ has the Cohen-Lyndon property for all sufficiently deep $N\lhd H$.
\end{theorem}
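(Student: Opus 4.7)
The plan is to refine Theorem \ref{simple DGO} by identifying the indexing set of its free product factors, working inside the rotating family machinery of Dahmani-Guirardel-Osin that powers Theorem \ref{simple DGO}. In that framework, the hyperbolically embedded pair $H \hookrightarrow_h G$ gives rise to a hyperbolic $G$-space $X$ with a distinguished $G$-invariant set of apices in canonical bijection with $G/H$; for all sufficiently deep $N\lhd H$, the conjugates $(N^g)_{gH\in G/H}$ (well-defined since $N\lhd H$) form a very rotating family on $X$, with the rotation at the apex $gH$ equal to $N^g$.

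I would first invoke the structure theorem for very rotating families from \cite{dahmani2017hyperbolically}: the group generated by all rotations—in our case $\ll N \rr$—decomposes as a free product of the rotation subgroups indexed by orbits of $\ll N \rr$ acting on the apex set (the action being induced by left multiplication on $G/H$, which is well-defined as $\ll N \rr \lhd G$). Choosing one representative $g_v$ per orbit, this yields $\ll N\rr = \prod^{*}_{[gH]} N^{g_v}$. The second step is the identification of this orbit set with $G/H\ll N \rr$: two cosets $g_1H, g_2H$ lie in a common $\ll N \rr$-orbit iff $g_2\in \ll N \rr g_1H = g_1H\ll N \rr$ (using normality of $\ll N \rr$ in $G$), iff $g_1H\ll N \rr = g_2H\ll N \rr$. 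Thus a complete set of orbit representatives is precisely a left transversal $T$ of $H\ll N \rr$ in $G$, and the decomposition reads $\ll N\rr = \prod^{*}_{t\in T} N^{t}$, which is the Cohen-Lyndon property.

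The principal obstacle will be invoking the rotating family structure theorem with exactly this indexing, since Theorem \ref{simple DGO} as quoted does not specify what $T$ is. A naive indexing by \emph{all} apices is incompatible with the free product normal form, because distinct apices in a common $\ll N \rr$-orbit yield rotation subgroups that are conjugate in $\ll N \rr$, and in a free product distinct nontrivial factors are never conjugate (a standard Bass-Serre argument: two conjugate vertex stabilizers must coincide as vertices since the edge stabilizers are trivial). This strongly suggests that [DGO]'s proof already produces the orbit indexing implicitly, and the work of Theorem \ref{simmain} is largely to make this explicit. A supplementary argument using Theorem \ref{thm. first part of simple DGO} (which yields $H\cap\ll N \rr = N$ for sufficiently deep $N$) may be needed to confirm that the subgroup at each apex truly collapses to $N^g$ rather than to the a priori larger intersection $\ll N \rr \cap H^g$.
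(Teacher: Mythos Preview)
Your proposal is correct and takes a genuinely different route from the paper's proof. The paper deduces Theorem~\ref{simmain} from the more general Theorem~\ref{main} (for weakly hyperbolically embedded families), whose proof is carried out entirely via van Kampen diagram surgery and geodesic polygon estimates in the relative Cayley graph: an explicit transversal $\{T_\lambda\}$ satisfying properties (P1)--(P3) is built by Zorn's lemma, Proposition~\ref{generate} shows these conjugates generate $\ll\mathcal{N}\rr$ by an induction on diagram type, and Proposition~\ref{free} rules out nontrivial relations by a careful isolated-component count in a $3k$-gon. Your approach instead stays inside the rotating-family framework of \cite{dahmani2017hyperbolically}: you invoke the windmill structure theorem for very rotating families, and the new content is the clean identification of the $\ll N\rr$-orbits on the apex set $G/H$ with the cosets $G/H\ll N\rr$, which immediately pins down the index set as a left transversal. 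The paper in fact explicitly notes (see the discussion after Theorem~\ref{simmain} and the Acknowledgement) that such a proof via very rotating families exists and was pointed out by the referee.

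What each buys: your argument is shorter and more conceptual once the DGO machinery is taken as a black box, and your Bass--Serre observation that conjugate free factors must coincide is exactly the right reason the orbit indexing is forced. The paper's diagram-surgery proof is more self-contained, gives an explicit constant ($24D$, sharpenable to $4D$), and---more importantly---establishes the result in the broader weakly hyperbolically embedded setting (Theorem~\ref{main}), which yields the graph-of-groups corollaries in Section~\ref{sec.rm}. Your concern about $\ll N\rr\cap H^g$ versus $N^g$ is well-placed but not an obstacle: in the DGO cone-off construction the rotation group at the apex $gH$ is \emph{defined} to be $N^g$, so the structure theorem outputs exactly the groups you want; Theorem~\ref{thm. first part of simple DGO} then confirms a posteriori that $N^g=\ll N\rr\cap H^g$.
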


In the special case where $G$ and $H$ are finitely generated and $G$ is hyperbolic relative to $H$, Theorem \ref{simmain} is proved in \cite[Theorem 4.8]{groves2016boundaries}. The proofs of \cite[Theorem 7.15]{dahmani2017hyperbolically} and \cite[Theorem 4.8]{groves2016boundaries} use windmills, very rotating families, and spiderwebs. It is possible to prove Theorem \ref{simmain} with these notions. However, the present paper provides a proof of a different flavor, using surgery on van Kampen diagrams and geodesic polygons of Cayley graphs.

\begin{remark}\label{rem. general version of CL}
In fact, we prove Theorem \ref{simmain} in more general settings of a group with a family of \textit{weakly hyperbolically embedded subgroups} (see Definition \ref{hyperbolically embedded}). As an application, we obtain Cohen-Lyndon type theorems for graphs of groups, e.g., amalgamated free products and HNN-extensions (see Corollaries \ref{graph of groups}, \ref{amalproduct}, and \ref{HNN}).
\end{remark}

Combining Theorem \ref{simmain} and Example \ref{eg. 2}, we obtain:

\begin{corollary}
Let $G$ be a group acting acylindrically on a Gromov hyperbolic space, and let $g\in G$ be a loxodromic element. Then $(G,E(g),N)$ has the Cohen-Lyndon property for all sufficiently deep $N\lhd E(g)$.
\end{corollary}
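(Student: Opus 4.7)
This corollary is essentially an immediate combination of Theorem \ref{simmain} and Example \ref{eg. 2}, so the ``proof'' is really just a matter of verifying that the hypotheses of Theorem \ref{simmain} are met in this setting. The plan is as follows.

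First, I would invoke Example \ref{eg. 2} (which cites \cite[Corollary 2.9]{dahmani2017hyperbolically}): since $G$ acts acylindrically on a Gromov hyperbolic space and $g\in G$ is loxodromic, there exists a maximal virtually cyclic subgroup $E(g)\leqslant G$ containing $g$ with the property that $E(g)\hookrightarrow_h G$. This is precisely the hypothesis needed to apply the main theorem. Second, I would apply Theorem \ref{simmain} with $H = E(g)$: it yields a finite set $\mathcal{F}\subset E(g)\setminus\{1\}$ such that for every normal subgroup $N\lhd E(g)$ with $N\cap\mathcal{F}=\emptyset$, the triple $(G,E(g),N)$ has the Cohen-Lyndon property. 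Since ``for all sufficiently deep $N\lhd E(g)$'' is exactly the statement that such an $\mathcal{F}$ exists, this finishes the proof.

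There is no real obstacle here; the work has all been done in proving Theorem \ref{simmain} and in the cited result of Dahmani-Guirardel-Osin that produces the hyperbolically embedded subgroup $E(g)$. The only conceivable subtlety would be checking that the notion of ``sufficiently deep'' used in the statement of the corollary matches the one used in Theorem \ref{simmain}, but since both are defined in terms of avoiding a finite set $\mathcal{F}\subset H\setminus\{1\}$ with $H=E(g)$, they coincide verbatim. Thus the corollary requires no additional argument beyond citation.
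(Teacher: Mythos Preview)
Your proposal is correct and matches the paper's approach exactly: the paper simply states this corollary as the result of ``Combining Theorem \ref{simmain} and Example \ref{eg. 2}'' with no further argument. There is nothing to add.
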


In the case where $G=F$ and $H=C$, we recover Theorem \ref{classical CL} for sufficiently deep (but not all) $\langle f \rangle\lhd C$. In the case where $G$ is a free product of locally indicable groups, by considering the action of $G$ on the corresponding Bass-Serre tree, we recover \cite[Theorem 1.1]{edjvet1987cohen} for sufficiently deep normal subgroups.

\paragraph{2.2 Structure of relative relation modules.} Let $Rel(G,\ll N \rr)$ and $Rel(H,N)$ be the relative relation modules of the exact sequences
$$1\rightarrow \ll N \rr \rightarrow G\rightarrow Q\rightarrow 1$$
and
$$1\rightarrow N\rightarrow H\rightarrow R\rightarrow 1,$$
respectively, i.e. $Rel(G,\ll N \rr)$ (resp. $Rel(H,N)$) is the $\mathbb{Z}Q$-module (resp. $\mathbb{Z}R$-module) whose base set is the abelianization of $\ll N \rr$ (resp. $N$) and the $Q$-action (resp. $R$-action) is induced by conjugation. If $G$ is free, then $Rel(G,\ll N \rr)$ is called a \textit{relation module}. For sufficiently deep $N$, it follows immediately from Theorem \ref{thm. first part of simple DGO} that the natural map identifies $R$ with a subgroup of $Q$. We can then further identify $\mathbb{Z}R$ with a subring of $\mathbb{Z}Q$. Thus, given any $\mathbb{Z}R$-module $A$, it makes sense to talk about the \textit{induced module} of $A$ from $\mathbb{Z}R$ to $\mathbb{Z}Q$, which is denoted by $Ind^Q_R A=\mathbb{Z}Q\bigotimes_{\mathbb{Z}R}A$.

If $G=F$ and $H=C$, Theorem \ref{classical CL} directly implies 
$$Rel(F,\ll f \rr)\cong\mathbb{Z}[F/C\ll f \rr]\cong Ind^Q_R \mathbb{Z}\cong Ind^Q_R Rel(C,\langle f \rangle)$$
as $\mathbb{Z}Q$-modules. In general, we have the following corollary to Theorem \ref{simmain}.

\begin{corollary}\label{module}
Let $G$ be a group with a subgroup $H\hookrightarrow_h G$. Then for all sufficiently deep $N\lhd H$, there is an isomorphism of $\mathbb{Z}Q$-modules
\begin{equation}\label{iso}
Rel(G,\ll N \rr)\cong Ind^Q_RRel(H,N).
\end{equation}
\end{corollary}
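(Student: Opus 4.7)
The plan is to deduce the corollary directly from Theorem \ref{simmain} by abelianizing and then matching the resulting direct sum decompositions on both sides.

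First, I apply Theorem \ref{simmain} to choose a left transversal $T$ of $H\ll N \rr$ in $G$ such that $\ll N \rr = \prod^{\ast}_{t \in T} N^t$. Since the abelianization functor sends free products to direct sums, this gives an isomorphism of abelian groups
\begin{equation*}
Rel(G,\ll N \rr) \;=\; \ll N \rr^{\mathrm{ab}} \;\cong\; \bigoplus_{t\in T} (N^t)^{\mathrm{ab}},
\end{equation*}
and for each $t$ the conjugation map $n \mapsto tnt^{-1}$ induces an isomorphism $N^{\mathrm{ab}} \xrightarrow{\sim} (N^t)^{\mathrm{ab}}$. On the other side, Theorem \ref{thm. first part of simple DGO} (for $N$ sufficiently deep) identifies $R = H/N$ with $H\ll N\rr/\ll N\rr \leqslant Q$, and passing $T$ through the quotient $G \twoheadrightarrow Q$ produces a left transversal of $R$ in $Q$. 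Consequently $\mathbb{Z}Q = \bigoplus_{t\in T} t\,\mathbb{Z}R$ as a right $\mathbb{Z}R$-module, and
\begin{equation*}
Ind^{Q}_{R} Rel(H,N) \;=\; \mathbb{Z}Q\otimes_{\mathbb{Z}R} N^{\mathrm{ab}} \;\cong\; \bigoplus_{t\in T} t\otimes N^{\mathrm{ab}}.
\end{equation*}

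Next I construct the comparison map $\phi \colon Ind^{Q}_{R} Rel(H,N) \to Rel(G,\ll N \rr)$ by specifying it on the summand indexed by $t$ as the composite $N^{\mathrm{ab}} \xrightarrow{\sim} (N^t)^{\mathrm{ab}} \hookrightarrow \ll N \rr^{\mathrm{ab}}$. Equivalently, for any $q \in Q$ and any lift $g \in G$, I set $\phi(q\otimes \bar n) = \overline{gng^{-1}}$. The point is to check that $\phi$ is a genuine $\mathbb{Z}Q$-module map. Well-definedness in the choice of lift: if $g' = g\nu$ with $\nu \in \ll N \rr$, then $g'ng'^{-1} = (g\nu g^{-1})(gng^{-1})(g\nu g^{-1})^{-1}$, and since $g\nu g^{-1}\in \ll N\rr$, this is conjugate to $gng^{-1}$ by an element of $\ll N\rr$, hence equal in $\ll N\rr^{\mathrm{ab}}$. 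The $\mathbb{Z}R$-balancing condition $\phi(qr\otimes \bar n) = \phi(q\otimes r\bar n)$ is a one-line computation using that $r$ lifts to some $h\in H$ and $r\bar n = \overline{hnh^{-1}}$. Finally, $Q$-equivariance is immediate from the definition.

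With $\phi$ in hand, bijectivity is automatic: the decompositions above are compatible summand by summand, and on each summand $t\otimes N^{\mathrm{ab}} \to (N^t)^{\mathrm{ab}}$ the map is the conjugation isomorphism. I do not expect any serious obstacle here; the real work is contained in Theorem \ref{simmain}. The only mild subtlety to be careful about is that $T$ plays two roles simultaneously — as a subset of $G$ (so that the free product decomposition of $\ll N\rr$ makes sense) and as a left transversal for $R$ in $Q$ (so that the induced module decomposes correctly) — and one must confirm that the Cohen-Lyndon property delivers a single $T$ working for both purposes, which is exactly what Definition \ref{simple CLp} provides.
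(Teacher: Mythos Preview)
Your proof is correct and follows essentially the same approach as the paper: both decompose $\ll N\rr^{\mathrm{ab}}$ as $\bigoplus_{t\in T}(N^t)^{\mathrm{ab}}$ via Theorem~\ref{simmain} and match it against the standard decomposition of the induced module indexed by the same transversal. The only cosmetic differences are that the paper phrases the matching step via the abstract characterization of induced modules (verifying that $Q$ transitively permutes the summands $\overline{N^t}$ with isotropy $R$, then invoking \cite[III.5.3]{brown1982cohomology}) rather than writing the map $q\otimes\bar n\mapsto\overline{gng^{-1}}$ by hand, and it derives the injectivity $R\hookrightarrow Q$ directly from the Cohen--Lyndon property (Proposition~\ref{genmodule}(a)) instead of citing Theorem~\ref{thm. first part of simple DGO}.
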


\begin{remark}
Merely knowing that $\ll N \rr = \prod^{\ast}_{t\in T}N^t$ for some subset $T\subset G$ is not enough to guarantee \eqref{iso}. For example, let $G$ be any abelian group and let $H$ be a proper subgroup of $G$. Then for any subgroup $N$ of $H$, $\ll N \rr = N = \prod^{\ast}_{t\in\{1\}}N^t$. But $Rel(G,\ll N \rr)$ (resp. $Rel(H,N)$) is a $\mathbb{Z}Q$-module (resp. $\mathbb{Z}R$-module) with the trivial $Q$-action (resp. $R$-action) and thus $Rel(G,\ll N \rr)\not\cong Ind^Q_RRel(H,N)$.
\end{remark}

\paragraph{2.3 Organization of the paper.}
The strategy of this paper is to deal with the more general setting where $G$ has a family of weakly hyperbolically embedded subgroups and prove the Cohen-Lyndon property in this case. Theorem \ref{simmain} and Corollary \ref{module} are simple consequences of the general results. Necessary preliminaries are provided in Section \ref{prelim}, which is divided into four subsections. Section \ref{subsec.vk} recalls the definition and basic properties of van Kampen diagrams. Section \ref{subsec.wh} surveys the notion of (weakly) hyperbolically embedded subgroups. The proof of the Cohen-Lyndon property relies heavily on surgeries on van Kampen diagrams and geodesic polygons of Cayley graphs. Section \ref{subsec.ic} is devoted to a concept called isolated components, which is vital to surgeries, and Section \ref{sec.surgery} collects some results about surgeries on van Kampen diagrams. In Section \ref{sec.transversal}, we construct particular transversals (with the aid of Zorn's lemma). Section \ref{proof} states and proves the main theorem of this paper in the general case by using the transversals constructed in Section \ref{sec.transversal}. Finally, Section \ref{sec.rm} analyzes the structure of the relative relation modules of Dehn fillings.

\paragraph{Acknowledgement.} This work was done when I was a graduate student at Vanderbilt University. I would like to thank my supervisor, Professor Denis Osin, for the priceless discussions. This paper would not have been written without his help. I would also like to thank the anonymous referee for the useful comments on an early version of this paper and for showing me an alternative proof of Theorem \ref{simmain} using very rotating families.

\section{Preliminaries}\label{prelim}
This section contains a brief discussion of concepts and tools used to prove the main theorem. Our main reference is \cite{dahmani2017hyperbolically}.

\subsection{Van Kampen diagrams}\label{subsec.vk}
Let $G$ be a group given by the presentation
\begin{equation}\label{pre0}
G=\langle\mathcal{A}\mid\mathcal{R}\rangle,
\end{equation}
where $\mathcal{A}$ is a symmetric set of letters and $\mathcal{R}$ is a symmetric set of words in $\mathcal{A}$ (i.e., for every $w\in\mathcal{R}$, every cyclic shift of $w$ or $w^{-1}$ belongs to $\mathcal{R}$).

A \textit{van Kampen diagram} $\Delta$ over \eqref{pre0} is a finite oriented connected planar $2$-complex with labels on its oriented edges such that
\begin{enumerate}
\item[(a)] each oriented edge of $\Delta$ is labeled by a letter in $\mathcal{A}\cup\{1\}$;
\item[(b)] if an oriented edge $e$ of $\Delta$ has label $a\in\mathcal{A}\cup\{1\}$, then $e^{-1}$ has label $a^{-1}$, where $e^{-1}$ (resp. $a^{-1}$) is the inverse of $e$ (resp. $a$).
\end{enumerate}

Here, $1$ is identified with the empty word over $\mathcal{A}$ and thus $1=1^{-1}$. By convention, the empty word of $\mathcal{A}$ represents the identity of $G$.

Let $p=e_1\cdot\cdot\cdot e_k$ be a path in a van Kampen diagram over \eqref{pre0} or in the Cayley graph $\Gamma(G, \mathcal{A})$. The initial vertex (resp. terminal vertex) of $p$ is denoted as $p^-$ (resp. $p^+$). The \textit{label} of $p$, denoted as $Lab(p)$, is obtained by first concatenating the labels of the edges $e_1,...,e_k$ and then removing all $1$'s, as $1$ is identified with the empty word. Therefore, the label of a path in a van Kampen diagram is a word over $\mathcal{A}$. If $w$ is a word over $\mathcal{A}$, then the notation $Lab(p)\equiv w$ will indicate a letter-by-letter equality between $Lab(p)$ and $w$.

\begin{remark}\label{decompose paths on diagrams}
Suppose that $p$ is a path in a van Kampen diagram over \eqref{pre0} with $Lab(p)\equiv w_1\cdot\cdot\cdot w_k$. Then we can decompose $p$ in the following way: Let $p_{w_1}$ be the maximal subpath of $p$ such that $p^-_{w_1}=p^-$ and $Lab(p_{w_1})\equiv w_1$. For $i=2,...,k$, let $p_{w_i}$ be the maximal subpath of $p$ such that $p^-_{w_i}=p^+_{w_{i-1}}$ and $Lab(p_{w_i})\equiv w_i$.
\end{remark}

Edges labeled by letters from $\mathcal{A}$ are called \textit{essential edges}, while edges labeled by the letter $1$ are called \textit{non-essential edges}. A \textit{face} of $\Delta$ is a $2$-cell of $\Delta$. Let $\Pi$ be a face of $\Delta$, the boundary of $\Pi$ is denoted as $\partial\Pi$. Likewise, the boundary of $\Delta$ is denoted by $\partial\Delta$. Note that if we choose a base point for $\partial\Pi$ (resp. $\partial\Delta$), then $\partial\Pi$ (resp. $\partial\Delta$) becomes a path in $\Delta$. For a word $w$ over $\mathcal{A}$, we use the notation $Lab(\partial\Pi)\equiv w$ (resp. $Lab(\partial \Delta)\equiv w$) to indicate that one can pick a base point to turn $\partial\Pi$ (resp. $\partial\Delta$) into a path $p$ so that $Lab(p)\equiv w$.

\begin{remark}\label{decompose loops on diagrams}
Suppose that $\Delta$ is a diagram with $Lab(\partial\Delta)\equiv w_1\cdot\cdot\cdot w_k$. Then we can decompose $\partial\Delta$ in the following way: Let $p_b$ be vertex of $\partial\Delta$ such that when we use $p_b$ as the base point of $\partial\Delta$, we can turn $\partial\Delta$ into a path $p$ with $Lab(p)\equiv w_1\cdot\cdot\cdot w_k$. And then we use Remark \ref{decompose paths on diagrams} to decompose $p$ and thus decompose $\partial\Delta$.
\end{remark}

Consider the following additional assumption on van Kampen diagrams:
\begin{enumerate}
\item[(c)] For every face $\Pi$ of a van Kampen diagram $\Delta$ over the presentation $\eqref{pre0}$, at least one of the following conditions (c1) and (c2) holds:
\item[(c1)] $Lab(\partial\Pi)$ is equal to an element of $\mathcal{R}$.
\item[(c2)] $\partial\Pi$ either consists entirely of non-essential edges or consists of exactly two essential edges with mutually inverse labels (in addition to non-essential edges).
\end{enumerate}

A face satisfying (c$_2$) is called a \textit{non-essential face}. All other faces are called \textit{essential faces}. The process of adding non-essential faces to a van Kampen diagram is called a \textit{refinement}. Figure \ref{0refine} illustrates a refinement on a van Kampen diagram, where the unlabeled edges are labeled by $1$. The interested readers are referred to \cite{ol2012geometry} for a formal discussion. By using refinements, we can ensure that
\begin{enumerate}
\item[(d)] Every face is homeomorphic to a disc, i.e., its boundary has no self-intersection.
\end{enumerate}

\begin{figure}
{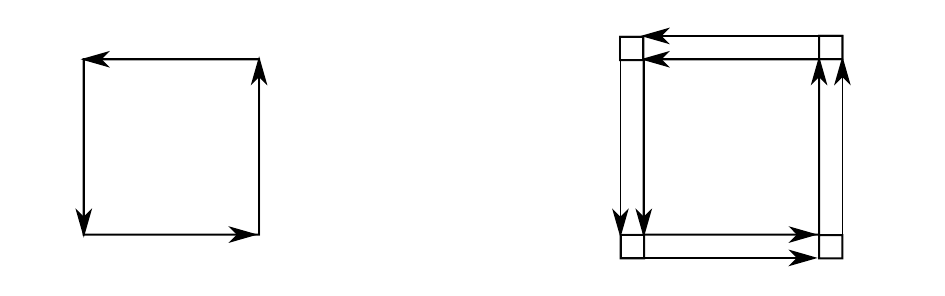}
\caption{A refinement of a van Kampen diagram over the presentation $G=\langle a,b\mid aba^{-1}b^{-1}=1\rangle$}\label{0refine}
\end{figure}

\begin{assumption}
In the sequel, the above assumptions (c) and (d) will be imposed on van Kampen diagrams.
\end{assumption}

The well-known van Kampen lemma states that a word $w$ over $\mathcal{A}$ represents $1$ in $G$ if and only if there is a van Kampen diagram $\Delta$ over \eqref{pre0} such that $\Delta$ is homeomorphic to a disc (such diagrams are called \textit{disk diagrams}), and that $Lab(\partial\Delta)\equiv w$.

\begin{remark}\label{mapofvk}
If a van Kampen diagram $\Delta$ is homeomorphic to a disc, and $O$ is a vertex of $\Delta$, then there exists a unique continuous map $\mu$ from the $1$-skeleton of $\Delta$ to $\Gamma(G,\mathcal{A})$ sending $O$ to the identity vertex, preserving the labels of the essential edges and collapsing non-essential edges to points.
\end{remark}

\subsection{Hyperbolically embedded subgroups and group theoretic Dehn fillings}\label{subsec.wh}
Let $G$ be a group, let $\{H_\lambda\}_{\lambda\in\Lambda}$ be a family of subgroups of $G$, let $X$ be a subset of $G$ such that $G$ is generated by $X$ together with the union of all $H_{\lambda},\lambda\in\Lambda$, and let $\mathcal{H}=\bigsqcup_{\lambda\in\Lambda}H_{\lambda}$. Consider the Cayley graph $\Gamma(G,X\sqcup \mathcal{H})$.

\begin{remark}
It is possible that $X$ and $H_{\lambda},\lambda\in\Lambda$, as subsets of $G$, have non-empty intersections with each other. As a consequence, several letters of $X\sqcup \mathcal{H}$ might represent the same element of $G$. If this is the case, the Cayley graph $\Gamma(G,X\sqcup \mathcal{H})$ will have multiple edges corresponding to those letters.
\end{remark}

\noindent
\textbf{Notation.}
Let $w$ be a word over the alphabet $X\sqcup \mathcal{H}$. The inverse of $w$ is denoted by $w^{-1}$. The \textit{length} of $w$, denoted as $\|w\|$, is the number of letters in $w$. We identify $w$ with the element of $G$ represented by $w$. Thus, if $S$ is a subset of $G$, then we write $w\in S$ to indicate that $w$ represents an element of $S$.

There are two types of equalities for words over $X\sqcup \mathcal{H}$. Given two words $u$ and $v$ over $X\sqcup \mathcal{H}$, the notation $u\equiv v$ indicates the letter-by-letter equality between $u$ and $v$ and the notation $u=_G v$ indicates that $u$ and $v$ represent the same element of $G$.

If $g$ is an element of $G$, then $|g|$ denotes the length of the shortest word in $X\sqcup \mathcal{H}$ representing $g$.



Note that, for each $\lambda\in\Lambda$, the Cayley graph $\Gamma(H_{\lambda},H_{\lambda})$ can be identified as the complete subgraph of $\Gamma(G,X\sqcup \mathcal{H})$ whose vertex set is $H_{\lambda}$, and edges are the ones labeled by letters from $H_{\lambda}$.

\begin{definition}
Fix $\lambda\in\Lambda$. A (combinatorial) path $p$ in $\Gamma(G,X\sqcup \mathcal{H})$ between vertices of $\Gamma(H_{\lambda},H_{\lambda})$ is called $H_{\lambda}$\textit{-admissible} if it does not contain any edge of $\Gamma(H_{\lambda},H_{\lambda})$. Note that a $H_{\lambda}$-admissible path $p$ is allowed to pass through vertices of $\Gamma(H_{\lambda},H_{\lambda})$. For every pair of elements $h,k\in H_{\lambda}$, let $\widehat{d}_{\lambda}(h,k)\in[0,+\infty]$ be the length of a shortest $H_{\lambda}$-admissible path connecting $h,k$. If no such path exists, set $\widehat{d}_{\lambda}(h,k)=+\infty$. The laws of summation on $[0,+\infty)$ extend naturally to $[0,+\infty]$ and it is easy to verify that $\widehat{d}_{\lambda}:H_{\lambda}\times H_{\lambda}\rightarrow [0,+\infty]$ defines a metric on $\Gamma(H_{\lambda},H_{\lambda})$ called the \textit{relative metric on} $\Gamma(H_{\lambda},H_{\lambda})$ \textit{with respect to} $X$.
\end{definition}

\begin{remark}
Let $p$ be a path in $\Gamma(G,X\sqcup \mathcal{H})$ with $Lab(p)\equiv h\in H_{\lambda}$, for some $\lambda\in\Lambda$. For simplicity, we denote $\widehat{d}_{\lambda}(1,h)$ by $\widehat{\ell}_{\lambda}(p)$.
\end{remark}

\begin{definition}\label{hyperbolically embedded}
Let $G$ be a group, let $\{H_\lambda\}_{\lambda\in\Lambda}$ be a family of subgroups of $G$, let $X$ be a subset of $G$, and let $\mathcal{H}=\bigsqcup_{\lambda\in\Lambda}H_{\lambda}$. We say that $\{H_\lambda\}_{\lambda\in\Lambda}$ \textit{weakly hyperbolically embeds into} $(G,X)$ (denoted as $\{H_\lambda\}_{\lambda\in\Lambda}\hookrightarrow_{wh}(G,X)$) if $G$ is generated by the set $X$ together with the union of all $H_{\lambda},\lambda\in\Lambda$, and the Cayley graph $\Gamma(G,X\sqcup \mathcal{H})$ is a Gromov hyperbolic space.

If the collection $\{H_\lambda\}_{\lambda\in\Lambda}\hookrightarrow_{wh}(G,X)$ and for each $\lambda\in\Lambda$, the metric space $(H_{\lambda},\widehat{d}_{\lambda})$ is proper, i.e., every ball of finite radius contains only finitely many elements, then we say that $\{H_\lambda\}_{\lambda\in\Lambda}$ \textit{hyperbolically embeds into} $(G,X)$ (denoted as $\{H_\lambda\}_{\lambda\in\Lambda}\hookrightarrow_h (G,X)$).

Further, the collection $\{H_\lambda\}_{\lambda\in\Lambda}$ \textit{hyperbolically embeds into} $G$, denoted as $\{H_\lambda\}_{\lambda\in\Lambda}\hookrightarrow_h G$, if there exists some subset $X\subset G$ such that $\{H_\lambda\}_{\lambda\in\Lambda}\hookrightarrow_h(G,X)$.
\end{definition}

\begin{remark}
Note that if the family $\{H_\lambda\}_{\lambda\in\Lambda}\hookrightarrow_{wh}(G,X)$ for some subset $X\subset G$ and $Y=X\cup X^{-1}$, then we also have $\{H_\lambda\}_{\lambda\in\Lambda}\hookrightarrow_{wh}(G,Y)$. In the sequel, we will always assume that the relative generating set $X$ is symmetric, i.e., $X=X^{-1}$.
\end{remark}

\begin{definition}
Suppose $G$ is a group with a family of subgroups $\{H_{\lambda}\}_{\lambda\in\Lambda}\hookrightarrow_{wh}(G,X)$ for some subset $X\subset G$. For $\lambda\in\Lambda$, let $\widehat{d}_{\lambda}$ be the relative metric on $\Gamma(H_{\lambda},H_{\lambda})$ with respect to $X$. We say that a property $P$ \textit{holds for all sufficiently deep Dehn fillings of} $\{H_{\lambda}\}_{\lambda\in\Lambda}$ (or \textit{for all sufficiently deep} $N_{\lambda}\lhd H_{\lambda},\lambda\in\Lambda,$) if there exists a number $C>0$ such that if $N_{\lambda}\lhd H_{\lambda}$ and $\widehat{d}_{\lambda}(1,n)>C$ for all $n\in N_{\lambda}\setminus\{1\},\lambda\in\Lambda$, then $P$ holds.
\end{definition}

One remarkable property of weakly hyperbolically embedded subgroups is the following group theoretic Dehn filling theorem.

\begin{theorem}[{\cite[Theorem 7.15]{dahmani2017hyperbolically}}]\label{Dehn filling}
Let $G$ be a group with a family of subgroups $\{H_\lambda\}_{\lambda\in\Lambda}\hookrightarrow_{wh}(G,X)$ for some subset $X\subset G$. Then for all sufficiently deep $N_{\lambda}\lhd H_{\lambda},\lambda\in\Lambda$, we have:
\begin{enumerate}
\item[(a)] For each $\lambda\in\Lambda$, the natural homomorphism $i_{\lambda}:H_{\lambda}/N_{\lambda}\rightarrow G/\ll \mathcal{N} \rr$ is injective (i.e., $H_{\lambda}\cap \ll \mathcal{N} \rr=N_{\lambda}\left.\right)$, where $\mathcal{N}=\bigcup_{\lambda\in\Lambda}N_{\lambda}$.
\item[(b)] $\{i_{\lambda}(H_{\lambda}/N_{\lambda})\}_{\lambda\in\Lambda}\hookrightarrow_{wh}G/\ll \mathcal{N} \rr$.
\item[(c)] There exist subsets $T_{\lambda}\subset G,\lambda\in\Lambda$, such that $\ll \mathcal{N} \rr = \prod^{\ast}_{\lambda\in\Lambda,t\in T_{\lambda}} N^t_{\lambda}$.
\end{enumerate}
\end{theorem}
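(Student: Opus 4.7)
The plan is to regard $\Gamma=\Gamma(G,X\sqcup\mathcal{H})$ as a Gromov hyperbolic space on which $G$ acts by left multiplication and to treat $\ll\mathcal{N}\rr$ as a small-cancellation rotation family. For each $\lambda\in\Lambda$ and each $g\in G$, the conjugate $gN_\lambda g^{-1}$ fixes the vertex $g$ of $\Gamma$ and permutes the incident edges. The depth hypothesis---that every non-trivial $n\in N_\lambda$ has $\widehat{d}_\lambda(1,n)>C$---translates into the statement that these pivot stabilizers are \emph{very rotating}: any non-trivial element of $gN_\lambda g^{-1}$ displaces a point at distance $r$ from $g$ by a quantity comparable to $\widehat{d}_\lambda(1,n)-O(r)$, since a short motion of $x$ by $n$ would, after conjugating by a geodesic from $g$ to $x$, produce an $H_\lambda$-admissible path from $1$ to $n$ of small $\widehat{d}_\lambda$-length.

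The main technical construction is a nested sequence of \emph{windmills} $W_0\subset W_1\subset\cdots$ exhausting $\Gamma$. Start with a geodesic ball $W_0$ around the identity; at each stage, enlarge $W_n$ by attaching, for every newly exposed pivot $v\in gH_\lambda$, all translates of the current windmill under the stabilizer $gN_\lambda g^{-1}$, and then saturate under the group so far generated. By induction one shows each $W_n$ is quasi-convex with constants depending only on the hyperbolicity of $\Gamma$, and that two distinct blades meeting at a common pivot are separated by a definite angular gap controlled by $C$. This is the step I expect to be the main obstacle: the depth constant $C$ must be chosen in advance from the hyperbolicity constant and universal quasi-convexity thresholds, and one has to synchronize the hyperbolic geometry of $\Gamma$ with the algebraic rotation action uniformly across infinitely many pivots lying in different $H_\lambda$-orbits.

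Once the windmill is in place the three conclusions follow fairly directly. For (c), a ping-pong argument, in which a non-trivial rotation at a pivot $v$ maps the complement of its blade into the blade, shows every element of $\ll\mathcal{N}\rr$ admits a unique reduced expression as an alternating product of pivot rotations; choosing one representative $t$ from each orbit of pivots under the appropriate action yields transversals $T_\lambda$ with $\ll\mathcal{N}\rr=\prod^{\ast}_{\lambda\in\Lambda,\,t\in T_\lambda}N_\lambda^t$. For (a), any $h\in H_\lambda\cap\ll\mathcal{N}\rr$ fixes the identity vertex, so its reduced expression involves only rotations pivoting at $1$, which all belong to $N_\lambda$, forcing $h\in N_\lambda$. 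For (b), the quotient $\Gamma/\ll\mathcal{N}\rr$ is the Cayley graph $\Gamma(G/\ll\mathcal{N}\rr,\overline{X}\sqcup\overline{\mathcal{H}})$, where $\overline{\,\cdot\,}$ denotes images, and the windmill gives a fundamental domain whose gluings are all large-angle rotations; a Cartan--Hadamard-type theorem for rotating families then transfers hyperbolicity downstairs, witnessing the weak hyperbolic embedding $\{i_\lambda(H_\lambda/N_\lambda)\}_{\lambda\in\Lambda}\hookrightarrow_{wh}G/\ll\mathcal{N}\rr$.
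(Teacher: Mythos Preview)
This theorem is not proved in the paper at all: it is quoted verbatim from Dahmani--Guirardel--Osin \cite[Theorem~7.15]{dahmani2017hyperbolically} as a preliminary result, and the paper gives no argument for it beyond the citation (and the parenthetical Remark~\ref{constant} that one may take the depth constant to be $4D$). So there is no ``paper's own proof'' to compare against.

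That said, your outline is essentially the original DGO argument, which the present paper explicitly identifies (see the discussion after Theorem~\ref{simmain}) as proceeding via windmills and very rotating families. One technical correction: in DGO the very rotating family does not act directly on $\Gamma(G,X\sqcup\mathcal{H})$ with pivots at individual vertices $g$. Rather, one passes to a coned-off space in which each coset $gH_\lambda$ --- already a diameter-$1$ complete subgraph in $\Gamma$ --- is collapsed to (or coned to) a single apex, and it is at these apices that the conjugates $gN_\lambda g^{-1}$ act as very rotating subgroups. The depth condition on $\widehat{d}_\lambda$ is what guarantees the apices are sufficiently separated and the rotations sufficiently large. Your description of $gN_\lambda g^{-1}$ ``fixing the vertex $g$'' would need this adjustment to go through, since in $\Gamma$ itself that subgroup fixes an entire coset of diameter one, not an isolated point.

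For context, the paper's own contribution is to strengthen part (c) to the Cohen--Lyndon property (Theorem~\ref{main}), and for that it deliberately avoids the windmill machinery in favor of van Kampen diagram surgery and isolated-component estimates in geodesic polygons.
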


As mentioned in the introduction, we are going to improve part (c) of the above theorem. To simplify statements, we introduce the following terminologies.

\begin{definition}
Let $G$ be a group with a subgroup $H$. The \textit{collection of left transversals of} $H$ \textit{in} $G$ is denoted by $LT(H,G)$, i.e., every element of $LT(H,G)$ is a left transversal of $H$ in $G$. 
\end{definition}

\begin{definition}\label{CLp}
Let $G$ be a group, let $\{H_{\lambda}\}_{\lambda\in\Lambda}$ be a family of subgroups of $G$, and let $N_{\lambda}$ be a normal subgroup of $H_{\lambda}$ for every $\lambda\in\Lambda$. We say that the triple $(G,\{H_{\lambda}\}_{\lambda\in\Lambda},\{N_{\lambda}\}_{\lambda\in\Lambda})$ has the \textit{Cohen-Lyndon property} if there exists a left transversal $T_{\lambda}\in LT(H_{\lambda}\ll \mathcal{N} \rr,G)$ for every $\lambda\in\Lambda$ such that $\ll \mathcal{N} \rr = \prod^{\ast}_{\lambda\in\Lambda,t\in T_{\lambda}} N^t_{\lambda}$, where $\mathcal{N}=\bigcup_{\lambda\in\Lambda}N_{\lambda}$.
\end{definition}

\subsection{Isolated components}\label{subsec.ic}
Let us assume, until the end of Section \ref{proof}, that $G$ is a group with a family of subgroups $\{H_\lambda\}_{\lambda\in\Lambda}\hookrightarrow_{wh}(G,X)$ for some symmetric subset $X\subset G$. For each $\lambda\in\Lambda$, let $\widehat{d}_{\lambda}$ be the relative metric on $\Gamma(H_{\lambda},H_{\lambda})$ with respect to $X$, and let $\mathcal{H}=\bigsqcup_{\lambda\in\Lambda}H_{\lambda}$. The following terminology goes back to \cite{osin2006relatively}.

\begin{definition}\label{connect}
Let $p$ be a path in $\Gamma(G,X\sqcup \mathcal{H})$. Fix $\lambda\in\Lambda$. An $H_{\lambda}$\textit{-subpath} $q$ of $p$ is a nontrivial subpath of $p$ labeled by a word over the alphabet $H_{\lambda}$ (if $p$ is a cycle, we allow $q$ to be a subpath of some cyclic shift of $p$). An $H_{\lambda}$-subpath $q$ of $p$ is called an $H_{\lambda}$-\textit{component} if $q$ is not properly contained in any other $H_{\lambda}$-subpath. Two $H_{\lambda}$-components $q_1,q_2$ of $p$ are called \textit{connected} if there exists a path $c$ in $\Gamma(G,X\sqcup \mathcal{H})$ such that $c$ connects a vertex of $q_1$ to a vertex of $q_2$, and that $Lab(c)$ is a letter from $H_{\lambda}$. An $H_{\lambda}$-component $q$ of $p$ is called \textit{isolated} if it is not connected to any other $H_{\lambda}$-component of $p$.
\end{definition}

The key property of isolated components is that, in a \textit{geodesic polygon} (i.e., a polygon in $\Gamma(G,X\sqcup \mathcal{H})$ with geodesic sides) $p$, the total $\widehat{\ell}$-length of isolated components is uniformly bounded above by a linear function of the number of sides. The following result is proved in \cite[Proposition 4.14]{dahmani2017hyperbolically}, which is a straightforward generalization of \cite[Proposition 3.2]{osin2007peripheral}.

\begin{lemma}[Dahmani-Guirardel-Osin]\label{length}
There exists a positive number $D$ satisfying the following property: Let $p$ be an $n$-gon in $\Gamma(G,X\sqcup \mathcal{H})$ with $(2,0)$-quasi-geodesic sides $p_1,...,p_n$ and let $I$ be a subset of the set of sides of $p$ such that every side $p_i\in I$ is an isolated $H_{\lambda_i}$-component of $p$ for some $\lambda_i\in\Lambda$. Then
$$\sum_{p_i\in I}\widehat{\ell}_{\lambda_i}(p_i)\leqslant Dn.$$
\end{lemma}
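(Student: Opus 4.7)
Fix a Gromov hyperbolicity constant $\delta$ for $\Gamma(G,X\sqcup\mathcal{H})$; the constant $D$ produced by the argument will depend only on $\delta$. The plan is, for each isolated $H_{\lambda_i}$-component $p_i\in I$, to construct an $H_{\lambda_i}$-admissible path between its endpoints whose length is controlled by $\delta$ and the number of sides of $p$, and then to combine these estimates by an induction on $n$.

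\emph{Step 1: Isolation yields admissibility.} Set $u=p_i^-$ and $h_i=Lab(p_i)\in H_{\lambda_i}$, so that $p_i$ is the single edge from $u$ to $uh_i$, and let $q_i:=p_{i+1}\cdots p_np_1\cdots p_{i-1}$ denote the complementary path from $uh_i$ to $u$. After left-translating by $u^{-1}$, computing $\widehat{d}_{\lambda_i}(1,h_i)$ amounts to finding a path from $1$ to $h_i$ in $\Gamma(G,X\sqcup\mathcal{H})$ containing no edge of $\Gamma(H_{\lambda_i},H_{\lambda_i})$---equivalently, no $H_{\lambda_i}$-labeled edge whose endpoints both lie in $H_{\lambda_i}$. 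I claim that the translated $q_i^{-1}$ already satisfies this. Indeed, any such forbidden edge would belong to an $H_{\lambda_i}$-component of $q_i$ all of whose vertices sit in the coset $uH_{\lambda_i}$, and that component would then be connected to $p_i$, contradicting isolation. Therefore $\widehat{\ell}_{\lambda_i}(p_i)\leqslant |q_i|$.

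\emph{Step 2: Hyperbolicity yields a short detour.} The trivial bound from Step~1, namely $|q_i|\leqslant$ perimeter of $p$, is much too weak for a linear estimate. To sharpen it I would exploit $\delta$-hyperbolicity of $\Gamma(G,X\sqcup\mathcal{H})$ via slimness of geodesic polygons: since $p_i$ is a single edge, its endpoints $u$ and $uh_i$ lie within distance $O(\delta)$ of some vertex on another side of $p$, and iterating the standard Rips--Bowditch "shortcut" construction produces a detour from $uh_i$ back to $u$ of length $O(n\delta)$, assembled from short bridges hopping between consecutive sides. The same isolation argument as in Step~1 guarantees that such bridges can be chosen so that no edge of $\Gamma(H_{\lambda_i},H_{\lambda_i})$ appears.

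\emph{Step 3: Induction on $n$.} Applying Steps~1 and~2 to each $p_i\in I$ separately and summing would give only an $O(n^2)$ bound. To obtain $O(n)$, I would induct on $n$: use the short detour for some distinguished $p_i$ as a diagonal $d$ cutting $p$ into two geodesic sub-polygons with strictly fewer sides, and apply the inductive hypothesis to each piece, together with the observation that every other $H_{\lambda_j}$-component of $p$ remains isolated in whichever sub-polygon contains it. The main obstacle is precisely this splitting step: the diagonal must be chosen so that no previously unconnected $H_{\lambda_j}$-component becomes connected after the insertion of $d$, which is ensured by taking $d$ to be a geodesic of $\Gamma(G,X\sqcup\mathcal{H})$ and using hyperbolicity to bound how many isolated components can lie in the neighborhood of $d$. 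Summing the inductive bounds from the two sub-polygons yields $\sum_{p_i\in I}\widehat{\ell}_{\lambda_i}(p_i)\leqslant Dn$ with $D=D(\delta)$. This is the outline followed in \cite[Proposition~3.2]{osin2007peripheral} and its generalization \cite[Proposition~4.14]{dahmani2017hyperbolically}.
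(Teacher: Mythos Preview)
The paper does not give its own proof of this lemma; it is simply quoted from \cite[Proposition~4.14]{dahmani2017hyperbolically} (itself a generalization of \cite[Proposition~3.2]{osin2007peripheral}). So there is nothing in the paper to compare your argument against beyond the bare citation, which you yourself invoke at the end of your outline.

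That said, your sketch has a genuine soft spot. Step~1 is fine: isolation of $p_i$ does force the complementary arc $q_i$, after translation, to be $H_{\lambda_i}$-admissible, because any $H_{\lambda_i}$-edge of $q_i$ with both endpoints in $uH_{\lambda_i}$ would lie in an $H_{\lambda_i}$-component of $p$ connected to $p_i$. The difficulty is Step~2. The ``bridges'' you produce from $\delta$-thinness are \emph{new} geodesic segments, not subpaths of $p$, and the isolation hypothesis says nothing about them. A short bridge between two sides may perfectly well be a single $H_{\lambda_i}$-edge landing in the coset $uH_{\lambda_i}$, and then your detour is no longer admissible. Saying ``the same isolation argument as in Step~1'' applies here is exactly the point that fails. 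The actual proofs in \cite{osin2007peripheral,dahmani2017hyperbolically} do not try to build admissible detours side by side; they instead control the total $\widehat{d}$-length of all isolated components simultaneously via an area/isoperimetric argument over a suitable relative presentation (in Osin's case) or a careful inductive cutting of the polygon where one tracks which components stay isolated in each piece (in the DGO version). If you want to push your approach through, you would need a separate lemma guaranteeing that short geodesics between points on the sides of $p$ can be perturbed to avoid edges of $\Gamma(H_{\lambda_i},H_{\lambda_i})$ without unbounded length increase, and that is essentially as hard as the statement you are trying to prove.
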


\begin{remark}\label{constant}
Theorem \ref{Dehn filling} asserts the existence of a constant $C$ such that if $\widehat{d}_{\lambda}(1,n)\geqslant C$ for every $n\in N_{\lambda}\setminus\{1\}$ and $\lambda\in\Lambda$, then $H_{\lambda}\cap \ll\mathcal{N}\rr=N_{\lambda}$ for all $\lambda\in\Lambda$. In fact, one can let $C=4D$, where $D$ is the constant provided by Lemma \ref{length} (see \cite{dahmani2017hyperbolically}).

We would like to clarify here that in the published version of this paper, Lemma \ref{length} considers $n$-gon with geodesic sides instead of $(2,0)$-quasi-geodesic sides. However, our proof uses \cite[Theorem 7.15]{dahmani2017hyperbolically}, which requires the constant $D$ to satisfy the current version of Lemma \ref{length}.
\end{remark}

\subsection{Diagram surgery}\label{sec.surgery}
The diagram surgery surveyed in this section was first introduced by Osin in \cite{osin2007peripheral}, where he proved a group theoretic Dehn filling theorem for relatively hyperbolic groups. Later, Dahmani et al. generalized this technique to deal with weakly hyperbolically embedded subgroups \cite{dahmani2017hyperbolically}.

Consider a symmetric set $\mathcal{R}$ of words over the alphabet $X\sqcup \mathcal{H}$ such that $G$ has the presentation
\begin{equation}\label{pre1}
G=\langle X\sqcup \mathcal{H}\mid\mathcal{R}\rangle,
\end{equation}
and that for all $\lambda\in\Lambda$, $\mathcal{R}$ contains all words over the alphabet $H_{\lambda}$ which represent the identity.

Suppose that $N_{\lambda}$ is a normal subgroup of $H_{\lambda}$ for each $\lambda\in\Lambda$. Denote the union of $N_{\lambda},\lambda\in\Lambda$, by $\mathcal{N}$. Killing $\ll \mathcal{N} \rr$ in $G$ is equivalent to adding, to $\mathcal{R}$, all words over $H_{\lambda}$ which represent elements of $N_{\lambda}$, for all $\lambda\in\Lambda$, to form a new presentation
\begin{equation}\label{pre}
Q=G/\ll\mathcal{N}\rr=\langle X\sqcup \mathcal{H},\mathcal{R}\cup \mathcal{S}\rangle,
\end{equation}
where $\mathcal{S}=\bigcup_{\lambda\in\lambda}\mathcal{S}_{\lambda}$ and $\mathcal{S}_{\lambda}$ consists of all words over $H_{\lambda}$ representing elements of $N_{\lambda}$ in $G$.

In the sequel, let $\mathcal{D}$ be the set of all van Kampen diagrams $\Delta$ over \eqref{pre} satisfying the following.

\begin{enumerate}
\item[(D1)] Topologically $\Delta$ is a disc with $k\geqslant 0$ holes. The boundary of $\Delta$ can be decomposed as $\partial \Delta=\partial_{ext}\Delta\cup\partial_{int}\Delta$, where $\partial_{ext}\Delta$ is the boundary of the disc, and $\partial_{int}\Delta$ consists of disjoint cycles (connected components) $c_1,...,c_k$ that bound the holes.
\item[(D2)] For $i=1,...,k$, $c_i$ is labeled by a word from $\mathcal{S}$.
\item[(D3)] Each diagram $\Delta$ is equipped with a \textit{cut system} that is a collection $T=\{t_1,...,t_k\}$ of disjoint paths (\textit{cuts}) $t_1,...,t_k$ in $\Delta$ without self-intersections such that, for $i=1,...,k$, the two endpoints of $t_i$ belong to $\partial\Delta$, and that after cutting $\Delta$ along $t_i$ for all $i = 1,...,k$, one gets a disc van Kampen diagram $\widetilde{\Delta}$ over \eqref{pre1}.
\end{enumerate}

See Figure \ref{findcut} for an illustration of a diagram in $\mathcal{D}$.



\begin{lemma}
A word $w$ over $X\sqcup \mathcal{H}$ represents $1$ in $Q$ if and only if there is a diagram $\Delta\in \mathcal{D}$ such that $Lab(\partial_{ext}\Delta)\equiv w$.
\end{lemma}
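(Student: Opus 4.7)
This is a van Kampen type lemma adapted to the class $\mathcal{D}$ of diagrams with holes, and I would prove the two implications separately.

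For the ``if'' direction: Given $\Delta \in \mathcal{D}$ with $Lab(\partial_{ext}\Delta) \equiv w$, I would apply (D3) to cut $\Delta$ along the cut system $T=\{t_1,\ldots,t_k\}$, obtaining a disc van Kampen diagram $\widetilde{\Delta}$ over the presentation \eqref{pre1} of $G$. The classical van Kampen lemma for \eqref{pre1} implies that $Lab(\partial\widetilde{\Delta})$ represents $1$ in $G$. Reading around $\partial\widetilde{\Delta}$ we see, up to cyclic permutation, the word $w$ followed by a product of conjugates $v_i c_i^{\pm 1} v_i^{-1}$, where $v_i$ is the label of the cut $t_i$ (read once in each direction on the two sides after cutting) and $c_i$ is the label of the $i$-th hole cycle. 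Since each $c_i \in \mathcal{S}$ represents an element of $\mathcal{N}$, the word $w$ equals in $G$ a product of conjugates of elements of $\mathcal{N}$, so $w \in \ll \mathcal{N} \rr$ and hence $w=1$ in $Q$.

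For the ``only if'' direction: Suppose $w=1$ in $Q$. Applying the classical van Kampen lemma to the presentation \eqref{pre} produces a disc diagram $\Sigma$ over \eqref{pre} with $Lab(\partial\Sigma) \equiv w$. Let $\Pi_1,\ldots,\Pi_k$ be the faces of $\Sigma$ whose boundary labels lie in $\mathcal{S}$; all other faces carry labels from $\mathcal{R}$. I would then inductively construct disjoint simple paths $t_1,\ldots,t_k$ in the $1$-skeleton of $\Sigma$ so that each $t_i$ joins a vertex of $\partial\Pi_i$ to a vertex of $\partial\Sigma \cup t_1 \cup \cdots \cup t_{i-1}$ while avoiding the interiors of all $\Pi_j$. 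Such paths exist because at each stage the complement of the already-used open cells and cuts is a connected planar region. Finally, remove the open $2$-cells $\Pi_1,\ldots,\Pi_k$ to create $k$ holes whose boundary cycles inherit labels from $\mathcal{S}$, and declare $\{t_1,\ldots,t_k\}$ the cut system. Conditions (D1) and (D2) are clear. For (D3), cutting along the $t_i$ in order successively reduces the number of holes by one while preserving disc topology; the resulting complex $\widetilde{\Delta}$ is simply connected, its $2$-cells are all labeled from $\mathcal{R}$, and so it is a disc diagram over \eqref{pre1}.

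The main technical point is the inductive construction of the cuts $t_i$ as disjoint simple paths in the $1$-skeleton of $\Sigma$ with the prescribed endpoint property, and the verification that excising the $\Pi_i$ followed by cutting along the $t_i$ yields a simply connected complex. This is purely planar/combinatorial surgery and makes no use of the hyperbolic geometry of $(G,X)$; it is the diagrammatic incarnation of the algebraic fact that every element of $\ll \mathcal{N} \rr$ is a product of conjugates of elements of $\mathcal{N}$.
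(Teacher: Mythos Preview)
Your ``if'' direction is correct, though the paper takes a shorter route: instead of cutting along the system and reading the boundary of $\widetilde\Delta$ algebraically, it simply glues a $2$-cell into each hole (the boundary label $c_i\in\mathcal S$ makes this a legitimate face over \eqref{pre}), producing a disc diagram over the presentation of $Q$ with boundary label $w$; the ordinary van Kampen lemma then gives $w=1$ in $Q$ directly.

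In the ``only if'' direction there is a gap. Condition (D3) requires that \emph{both} endpoints of every cut $t_i$ lie on $\partial\Delta$ (that is, on $\partial_{ext}\Delta$ or on one of the hole cycles $c_j$). Your inductive scheme permits $t_i$ to terminate on a previous cut $t_j$, which lies in the interior of $\Delta$; what you build is a tree of arcs rather than a system of disjoint boundary-to-boundary cuts, so the resulting diagram is not an element of $\mathcal D$ as defined. A related issue is the claim that the $t_i$ can be chosen as disjoint simple paths in the $1$-skeleton: connectedness of the planar complement guarantees a \emph{topological} arc, but combinatorially you may be forced through vertices or edges already occupied by earlier cuts, and you give no mechanism to avoid this. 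The paper handles both problems simultaneously: it fixes a single vertex $O\in\partial_{ext}\Delta'$, draws (possibly intersecting, but non-crossing) paths from $O$ to each hole boundary, and then passes to a \emph{refinement} that separates these paths into pairwise disjoint ones. After refinement every cut runs from $\partial_{ext}\Delta$ to a hole cycle, so (D3) is satisfied on the nose. Your construction can be repaired in exactly this way, but the refinement step is not optional.
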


\begin{proof}
Let $w$ be a word over $X\sqcup \mathcal{H}$. If there is a diagram $\Delta\in \mathcal{D}$ such that $\partial_{ext}\Delta \equiv w$, by filling the holes of $\Delta$ with faces whose boundaries are labeled by words from $\mathcal{S}$, one creates a disc van Kampen diagram over \eqref{pre1}, whose boundary is labeled by $w$. Conversely, if $w$ represents $1$ in $Q$, then there exists a disc van Kampen diagram $\overline{\Delta}$ over \eqref{pre1} with $Lab(\partial\overline{\Delta})\equiv w$. By removing all faces of $\overline{\Delta}$ labeled by words from $\mathcal{S}$, we obtain a diagram $\Delta'$ satisfying (D1) and (D2). To produce a cut system, choose a vertex $O$ in $\partial_{ext}\Delta'$. Connect $O$ with each component of $\partial_{int}\Delta'$ by a path so that these paths do not cross each other (although they do intersect each other). By passing to a refinement of $\Delta'$, one can separate these paths so that they no longer intersect each other and thus creates a diagram $\Delta$ satisfying (D1), (D2), and (D3) with $Lab(\partial_{ext}\Delta)\equiv w$.
\end{proof}

Figure \ref{findcut} illustrates the last step of the above proof. The left half shows the diagram $\Delta'$ where a dash path and a dot-dash path connect $O$ with two components of $\partial_{int}\Delta'$. By thickening these paths with a refinement, we obtain the right half. The regions shaded by horizontal and vertical lines consist of non-essential faces, while the paths from $O_1$ to $U_1$ and from $O_3$ to $V_2$ form a cut system.

\begin{figure}
{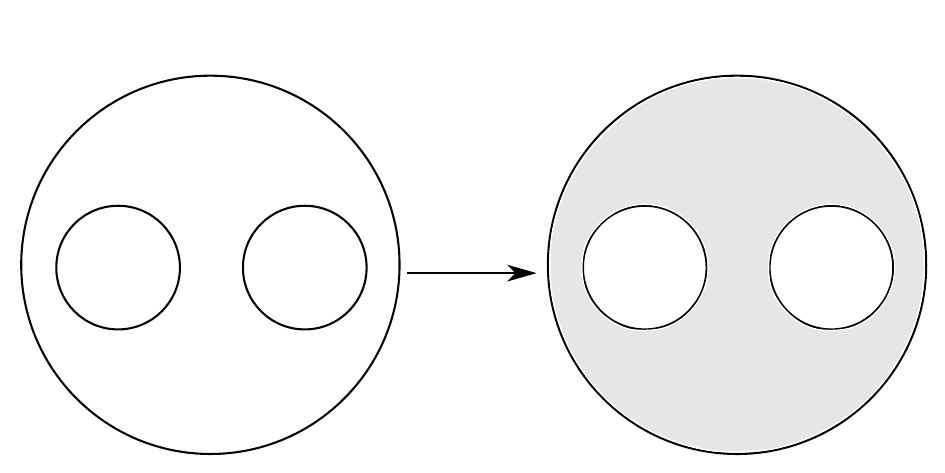}
\caption{How to produce a cut system}\label{findcut}
\end{figure}

Let $\Delta$ be a diagram in $\mathcal{D}$ and let $\widetilde{\Delta}$ be the disc van Kampen diagram resulting from cutting $\Delta$ along its set of cuts. Define $\kappa:\widetilde{\Delta}\rightarrow\Delta$ to be the map that ``sews'' the cuts. Fix an arbitrary vertex $O$ in $\widetilde{\Delta}$ and let $\mu$ be a map sending the $1$-skeleton of $\Delta$ to $\Gamma(G,X\sqcup \mathcal{H})$, as described by Remark \ref{mapofvk}.

\begin{definition}
Let $\Delta_1$ and $\Delta_2$ be two diagrams of $\mathcal{D}$ and let $\Gamma_1$ (resp. $\Gamma_2\left.\right)$ be the subgraph of the $1$-skeleton of $\Delta_1$ (resp. $\Delta_2\left.\right)$ consisting of $\partial\Delta_1$ (resp. $\partial\Delta_2\left.\right)$ and all cuts of $\Delta_1$ (resp. $\Delta_2\left.\right)$. We say that $\Delta_1$ and $\Delta_2$ are \textit{equivalent} if there exists a graph isomorphism $\Gamma_1\rightarrow\Gamma_2$ which preserves labels and orientations of edges, and maps the cuts and boundary of $\Delta_1$ to the cuts and boundary of $\Delta_2$, respectively.
\end{definition}

The following Lemmas \ref{adding path} and \ref{cut} are results from \cite{dahmani2017hyperbolically}, which are straightforward generalizations of results of \cite{osin2007peripheral}. Note that the authors of \cite{dahmani2017hyperbolically} assume that the presentation \eqref{pre1} has a linear relative isoperimetric function, but this assumption is not used in the proofs of those lemmas.

\begin{lemma}[{\cite[Lemma 7.11]{dahmani2017hyperbolically}}]\label{adding path}
Let $a,b$ be two vertices on $\partial\Delta$ and let $\widetilde{a},\widetilde{b}$ be two vertices on $\partial\widetilde{\Delta}$ such that $\kappa(\widetilde{a})=a,\kappa(\widetilde{b})=b$. Then for any path $p$ in $\Gamma(G,X\sqcup \mathcal{H})$ connecting $\mu(\widetilde{a})$ to $\mu(\widetilde{b})$, there is a diagram $\Delta_1\in\mathcal{D}$ with the following properties:

\begin{enumerate}
\item[(a)] $\Delta$ and $\Delta_1$ are equivalent.
\item[(b)] There is a path $q$ in $\Delta_1$ without self-intersections such that (1) $q$ connects $a$ and $b$, (2) $q$ has no common vertices with the cuts of $\Delta_1$ except possibly for $a,b$, and (3) $Lab(q)\equiv Lab(p)$.
\end{enumerate}
\end{lemma}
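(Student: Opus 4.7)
The strategy is to construct $\Delta_1$ by glueing two copies of an auxiliary disc van Kampen diagram $\Sigma$ into $\widetilde{\Delta}$ along a path from $\widetilde{a}$ to $\widetilde{b}$, producing a new interior arc carrying the label $Lab(p)$ while preserving the outer boundary label. Re-sewing the original cut system then yields $\Delta_1 \in \mathcal{D}$, equivalent to $\Delta$ and containing the desired path $q$.

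First, pick a combinatorial path $\widetilde{s}$ from $\widetilde{a}$ to $\widetilde{b}$ in $\widetilde{\Delta}$ whose edges are disjoint from the cuts of $\widetilde{\Delta}$; this is an arc of $\partial \widetilde{\Delta}$ when possible, or is routed through the interior after an auxiliary refinement of $\widetilde{\Delta}$ when both boundary arcs pass through cuts. Since $\mu(\widetilde{s})$ and $p$ share their endpoints in $\Gamma(G, X \sqcup \mathcal{H})$, the labels $Lab(\widetilde{s})$ and $Lab(p)$ represent the same element of $G$, so $Lab(p) \cdot Lab(\widetilde{s})^{-1}$ represents the identity. Applying the van Kampen lemma to the presentation \eqref{pre1}, we obtain a disc van Kampen diagram $\Sigma$ with $\partial \Sigma = p_\Sigma \cdot \widetilde{s}_\Sigma^{-1}$, $Lab(p_\Sigma) \equiv Lab(p)$, and $Lab(\widetilde{s}_\Sigma) \equiv Lab(\widetilde{s})$. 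Take two copies $\Sigma^{(1)}, \Sigma^{(2)}$ of $\Sigma$; glue $\Sigma^{(1)}$ to $\widetilde{\Delta}$ along $\widetilde{s}$ (externally along $\widetilde{s}$ if $\widetilde{s} \subset \partial \widetilde{\Delta}$, or by slit-insertion otherwise), and then glue $\Sigma^{(2)}$ onto the exposed $p$-arc of $\Sigma^{(1)}$ by identifying $p_{\Sigma^{(2)}}$ with $p_{\Sigma^{(1)}}$. The resulting disc $\widetilde{\Delta}_1$ has outer boundary cyclically labeled like $\partial \widetilde{\Delta}$ (the $\widetilde{s}$-arc gets replaced by $\widetilde{s}_{\Sigma^{(2)}}$, which carries the same label), and the shared edge $p_{\Sigma^{(1)}} = p_{\Sigma^{(2)}}$ is an interior arc $\widetilde{q}$ from $\widetilde{a}$ to $\widetilde{b}$ with $Lab(\widetilde{q}) \equiv Lab(p)$.

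Finally, sew the cuts of $\widetilde{\Delta}_1$, which coincide with those of $\widetilde{\Delta}$ and are undisturbed by the gluings because $\widetilde{s}$ avoids the cuts, to obtain $\Delta_1 \in \mathcal{D}$. A label- and orientation-preserving graph isomorphism matches the boundary and cut graphs of $\Delta_1$ and $\Delta$, so (a) holds. Taking $q$ to be the image of $\widetilde{q}$ under the sewing map $\widetilde{\Delta}_1 \to \Delta_1$ yields a path from $a$ to $b$ with $Lab(q) \equiv Lab(p)$ lying in $\Sigma^{(1)} \cup \Sigma^{(2)}$ and hence disjoint from the cuts except possibly at $a$ and $b$, which gives (b). The main technical obstacle I anticipate is producing the path $\widetilde{s}$ disjoint from cut interiors; handling this via the refinement/slit-insertion alternative above is the only case analysis beyond the core van Kampen surgery.
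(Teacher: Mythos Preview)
The paper does not prove this lemma; it is quoted verbatim from \cite[Lemma 7.11]{dahmani2017hyperbolically} and used as a black box, so there is no in-paper argument to compare against. That said, your proposal is more elaborate than necessary and the extra machinery is where the soft spots lie.

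The definition of equivalence only asks that the labelled graph $\partial\Delta\cup\{\text{cuts}\}$ be preserved; nothing about the $2$-cells of $\widetilde{\Delta}$ needs to survive. Hence there is no reason to keep $\widetilde{\Delta}$ at all or to thread a path $\widetilde{s}$ through it. The standard construction simply discards the interior of $\widetilde{\Delta}$: write $\partial\widetilde{\Delta}=\widetilde{s}_1\widetilde{s}_2$ with $\widetilde{s}_1$ running from $\widetilde{a}$ to $\widetilde{b}$; since $Lab(\widetilde{s}_1)=_G Lab(p)$ and $Lab(p)Lab(\widetilde{s}_2)=_G 1$, take disc diagrams $\Sigma_1,\Sigma_2$ over \eqref{pre1} with $Lab(\partial\Sigma_1)\equiv Lab(\widetilde{s}_1)Lab(p)^{-1}$ and $Lab(\partial\Sigma_2)\equiv Lab(p)Lab(\widetilde{s}_2)$, glue them along the $Lab(p)$-arcs to form a new disc $\widetilde{\Delta}_1$ with $\partial\widetilde{\Delta}_1$ carrying the same labelled cycle as $\partial\widetilde{\Delta}$, and then re-sew the cuts (which sit on $\partial\widetilde{\Delta}_1$). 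The common $p$-arc is interior except at its endpoints, so its image $q$ is automatically disjoint from the cuts, with no case analysis needed.

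Your version can be made to work, but the step you flag as the ``main technical obstacle'' is genuinely awkward: when both boundary arcs from $\widetilde{a}$ to $\widetilde{b}$ meet cut-arcs, producing an interior $\widetilde{s}$ via refinement and then performing a ``slit-insertion'' of $\Sigma^{(1)}\cup\Sigma^{(2)}$ requires you to cut $\widetilde{\Delta}$ into two discs and reassemble, which you do not spell out; and if you instead take $\widetilde{s}$ along the boundary without avoiding cuts, the subsequent external gluing pushes one copy of each affected cut-arc into the interior while its partner stays on the boundary, so the re-sewing no longer produces a planar diagram in $\mathcal{D}$. None of these difficulties arise once you realise that $\widetilde{\Delta}$ itself is expendable.
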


\begin{definition}
Fix $\lambda\in\Lambda$. An $H_{\lambda}$\textit{-subpath} in $\partial\Delta$ (resp. $\partial\widetilde{\Delta}$) for some $\Delta\in \mathcal{D}$ is a path labeled by a nontrivial word over $H_{\lambda}$. An $H_{\lambda}$-subpath $p$ of $\partial\Delta$ (resp. $\partial\widetilde{\Delta}$) is called an $H_{\lambda}$\textit{-component} if $p$ is not properly contained in any other $H_{\lambda}$-subpath. Two $H_{\lambda}$-components $p,q$ of $\partial\Delta$ are \textit{connected} if there exist $H_{\lambda}$-components $a,b$ in $\partial\widetilde{\Delta}$ such that $\kappa(a)$ (resp. $\kappa(b)$) is a subpath of $p$ (resp. $q$), and that $\mu(a),\mu(b)$ are connected in $\Gamma(G,X\sqcup \mathcal{H})$ (in the sense of Definition \ref{connect}).
\end{definition}

\begin{remark}
The definitions of $H_{\lambda}$-subpaths, $H_{\lambda}$-components, and connected $H_{\lambda}$-components in $\partial\Delta$ for a van Kampen diagram $\Delta\in\mathcal{D}$ or $\partial\widetilde{\Delta}$ do not depend on the pre-chosen vertex $O$.
\end{remark}

\begin{definition}\label{type of diagrams}
The \textit{type} of $\Delta$ is defined by the formula
$$\tau(\Delta)=(k,\sum_{i=1}^k\|Lab(t_i)\|),$$
where $k$ is the number of holes in $\Delta$ and $t_1,...,t_k$ are the cuts. We order the types of diagrams in $\mathcal{D}$ lexicographically: $(k_1,\ell_1)<(k_2,\ell_2)$ if and only if either $k_1<k_2$ or $k_1=k_2$ and $\ell_1<\ell_2$.
\end{definition}

\begin{definition}
A word $w$ over $X\sqcup\mathcal{H}$ is called \textit{geodesic} if it labels a geodesic in $\Gamma(G,X\sqcup\mathcal{H})$.
\end{definition}

\begin{definition}
For any word $w$ over $X\sqcup\mathcal{H}$, let $\mathcal{D}(w)$ be the set of diagrams $\Delta\in\mathcal{D}$ such that $Lab(\partial_{ext}\Delta)\equiv w$.
\end{definition}

\begin{lemma}[{\cite[Lemma 7.17]{dahmani2017hyperbolically} (see also \cite[Lemma 5.2]{osin2007peripheral})}]\label{cut}
Suppose that for every $\lambda\in\Lambda$ and $n\in N_{\lambda}\setminus\{1\}$, we have $\widehat{d}_{\lambda}(1,n)>4D$, where $D$ is the constant given by Lemma \ref{length}. Let $w$ be a geodesic word over $X\sqcup \mathcal{H}$ representing $1$ in $Q$, and let $\Delta$ be a diagram in $\mathcal{D}(w)$ of minimal type. Then there exists $\lambda\in\Lambda$ and a connected component $c$ of $\partial_{int}\Delta$ such that $c$ is connected to an $H_{\lambda}$-component of $\partial_{ext}\Delta$.
\end{lemma}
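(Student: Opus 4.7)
\emph{Proof plan.} The proof proceeds by contradiction: assume that no connected component $c_i$ of $\partial_{int}\Delta$ is connected to an $H_\lambda$-component of $\partial_{ext}\Delta$ for any $\lambda\in\Lambda$. The plan is to map $\partial\widetilde\Delta$ into $\Gamma(G,X\sqcup\mathcal{H})$ via $\mu$, to extract from it a geodesic polygon whose $c_i$-sides are forced to be isolated $H_{\lambda_i}$-components, and then to derive a length contradiction from Lemma \ref{length}.

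First, I would exploit the minimality of $\tau(\Delta)=(k,\ell)$ (and, when needed, pass to an equivalent diagram via Lemma \ref{adding path}) to establish four reductions. (a) Each cut $t_i$ can be assumed labeled by a geodesic word of $\Gamma(G,X\sqcup\mathcal{H})$: otherwise Lemma \ref{adding path} produces a replacement path of strictly smaller length, decreasing $\ell$. (b) The cut system can be arranged so that each cut has one endpoint on $\partial_{ext}\Delta$ and the other on a distinct $c_i$, so that each $c_i$ is touched by exactly one cut endpoint; in particular $\partial\widetilde\Delta$ decomposes into $k$ arcs of $\partial_{ext}\Delta$, $k$ opened $c_i$-arcs, and the $2k$ copies of cuts. (c) If two distinct components $c_i,c_j$ with $\lambda_i=\lambda_j=\lambda$ were connected, then the connecting letter $h\in H_\lambda$ would, via Lemma \ref{adding path}, give a path in an equivalent diagram joining $c_i$ to $c_j$; one could then topologically merge the two holes into a single hole whose label is $n_i\cdot h\cdot n_j^{\pm 1}\cdot h^{-1}$, and this lies in $N_\lambda$ by the normality $N_\lambda\lhd H_\lambda$, so the merged diagram lies in $\mathcal{D}(w)$ with $k$ decreased by one, contradicting minimality. (d) Analogously, if some $c_i$ were connected to an $H_{\lambda_i}$-component appearing in some cut side, Lemma \ref{adding path} would allow rerouting the cut through the single connecting edge, strictly decreasing $\ell$, again contradicting minimality.

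Next, I would construct the geodesic polygon $P$ in $\Gamma(G,X\sqcup\mathcal{H})$. Take the closed path $\mu(\partial\widetilde\Delta)$ and, for each $i$, substitute the image of the opened $c_i$ by the single $H_{\lambda_i}$-edge from $\mu(c_i^-)$ to $\mu(c_i^+)=\mu(c_i^-)\cdot n_i$ (where $n_i\in N_{\lambda_i}\setminus\{1\}$ is the element represented by $Lab(c_i)$). By reductions (a) and (b), the resulting loop is a geodesic polygon with exactly $n=4k$ sides: $k$ arcs of $\partial_{ext}\Delta$ (geodesic because $w$ itself is geodesic), $k$ single $H_{\lambda_i}$-edges (one per hole), and $2k$ geodesic cut sides. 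By the contradiction hypothesis together with (c) and (d), each of the $k$ single edges replacing a $c_i$ is an isolated $H_{\lambda_i}$-component of $P$.

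Finally, I would invoke Lemma \ref{length} with $I$ equal to the set of these $c_i$-edges, giving
\[
\sum_{i=1}^{k}\widehat{\ell}_{\lambda_i}(c_i\text{-edge})\;\leq\; D\cdot n \;=\;4Dk.
\]
On the other hand, since $n_i\in N_{\lambda_i}\setminus\{1\}$ and each such edge has $\widehat{\ell}_{\lambda_i}=\widehat{d}_{\lambda_i}(1,n_i)>4D$ by hypothesis, the left-hand side is strictly greater than $4Dk$, a contradiction. The hard part of this plan is carrying out the surgery reductions (c) and (d) rigorously: one must make the topological rearrangements of cuts and holes precise, verify that the modified diagrams still satisfy (D1)--(D3), and use normality of $N_\lambda$ in $H_\lambda$ at the critical step where new boundary labels of merged holes must remain in $\mathcal{S}_{\lambda}$.
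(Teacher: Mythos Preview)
The paper does not supply its own proof of this lemma; it is quoted from \cite[Lemma~7.17]{dahmani2017hyperbolically} (going back to \cite[Lemma~5.2]{osin2007peripheral}), and your proposal is exactly the argument given in those references: minimality of the type forces the cuts to be geodesic and rules out connections among the $c_i$ or between some $c_i$ and a cut, after which the $\mu$-images of the holes become isolated components of a geodesic $4k$-gon and Lemma~\ref{length} gives the contradiction $4Dk<\sum_i\widehat d_{\lambda_i}(1,n_i)\le 4Dk$.

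One small caveat worth flagging: your reduction~(b) --- that each cut runs from $\partial_{ext}\Delta$ to a distinct $c_i$, so that every hole is opened by exactly one cut --- is built into Osin's original definition of a cut system, but it is not literally forced by the paper's axiom~(D3), which only requires cut endpoints to lie on $\partial\Delta$ (so a cut could in principle join two interior components, giving some $c_i$ two incident cut endpoints and breaking your $4k$-gon count and the isolation argument). You should therefore either minimise the type over cut systems of this restricted form from the outset, which is harmless since such systems always exist by the construction preceding Lemma~\ref{adding path}, or add a line explaining why a minimal-type diagram can be taken with a cut system of this shape. With that adjustment the rest of your plan goes through as written.
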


\section{Construction of the transversals}\label{sec.transversal}
The proof of Theorem \ref{main} relies on constructing a particular left transversal $T_{\lambda}\in LT(H_{\lambda}\ll\mathcal{N}\rr,G)$ for each $\lambda\in\Lambda$. It is convenient to construct a collection $\{T_{\lambda}\}_{\lambda\in\Lambda}$ of sets of words over $X\sqcup \mathcal{H}$ satisfying the following properties (P1) through (P3), and think of $T_{\lambda}$ as a transversal in $LT(H_{\lambda}\ll\mathcal{N}\rr,G)$ (identifying words over $X\sqcup\mathcal{H}$ and the elements of $G$ represented by those words) for $\lambda\in\Lambda$. Recall that $\|w\|$ is the length of $w$ for a word $w$ over $X\sqcup H$, and that $|g|$ denotes the length of a geodesic word over $X\sqcup\mathcal{H}$ representing an element $g\in G$.

\begin{enumerate}
\item[(P1)] [$\{T_{\lambda}\}_{\lambda\in\Lambda}$ is transversal] For each $\lambda\in\Lambda$, $T_{\lambda}\in LT(H_{\lambda}\ll\mathcal{N}\rr,G)$.
\item[(P2)] [$\{T_{\lambda}\}_{\lambda\in\Lambda}$ is geodesic] If $w\in T_{\lambda}$ for some $\lambda\in\Lambda$, and $gH_{\lambda}\ll\mathcal{N}\rr=wH_{\lambda}\ll\mathcal{N}\rr$ for some $g\in G$, then $\|w\|\leqslant |g|$. This implies that, for all $\lambda\in\Lambda$, every $w\in T_{\lambda}$ is a geodesic word over $X\sqcup\mathcal{H}$.
\item[(P3)] [$\{T_{\lambda}\}_{\lambda\in\Lambda}$ is prefix closed] Let $\lambda,\mu\in\Lambda$. If a word $w\in T_{\lambda}$ can be decomposed as $w\equiv uhv$ with $h\in H_{\mu}\setminus\{1\}$ ($u,v$ are allowed to be empty words), then $u\in T_{\mu}$ and $\widehat{d}_{\mu}(1,h)\leqslant \widehat{d}_{\mu}(1,h')$ for all $h'\in hN_{\mu}$.
\end{enumerate}

\begin{lemma}\label{transversal}
There exists a collection $\{T_{\lambda}\}_{\lambda\in\Lambda}$ satisfying (P1), (P2), and (P3).
\end{lemma}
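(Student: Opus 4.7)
The plan is to apply Zorn's lemma. Consider the poset $\mathcal{P}$ whose elements are collections $\{S_{\lambda}\}_{\lambda\in\Lambda}$ in which each $S_{\lambda}$ is a set of geodesic words over $X\sqcup \mathcal{H}$ whose elements represent pairwise distinct cosets of $H_{\lambda}\ll \mathcal{N} \rr$ in $G$, and such that (P2) and (P3) already hold for $\{S_{\lambda}\}_{\lambda\in\Lambda}$; order $\mathcal{P}$ by componentwise inclusion. The empty collection lies in $\mathcal{P}$, and unions of chains remain in $\mathcal{P}$ because (P2) and (P3) are conditions local to individual elements together with their prefixes. Zorn's lemma then yields a maximal element $\{T_{\lambda}\}_{\lambda\in\Lambda}$, and the content of the lemma reduces to showing that such a maximal collection automatically satisfies (P1).

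Suppose otherwise, so that some coset $gH_{\mu}\ll \mathcal{N} \rr$ is not represented in $T_{\mu}$. I would choose a pair $(\mu,g)$ with $|g|$ minimal among all such missing-coset pairs; then $m:=|g|$ is also the minimum length of any element of $gH_{\mu}\ll \mathcal{N} \rr$. Starting from an arbitrary geodesic word $a_1\cdots a_m$ over $X\sqcup\mathcal{H}$ representing $g$, the goal is to perform a left-to-right surgery that produces a geodesic word $w^{\ast}$ of length $m$ which represents some element of $gH_{\mu}\ll \mathcal{N} \rr$ and simultaneously satisfies the (P3) constraints. Adjoining $w^{\ast}$ to $T_{\mu}$ then contradicts maximality.

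The surgery at position $i$ has two parts, triggered when the $i$-th letter of the current word lies in some $H_{\nu_i}\setminus\{1\}$. Let $p$ denote the current length-$(i-1)$ prefix as an element of $G$. If $p\notin T_{\nu_i}$, then since $|p|\leqslant i-1<m$, minimality of $m$ forces the coset $pH_{\nu_i}\ll \mathcal{N} \rr$ to be already represented by some $t\in T_{\nu_i}$. Writing $t^{-1}p=hn$ with $h\in H_{\nu_i}$ and $n\in \ll \mathcal{N} \rr$, and invoking the normality of $\ll \mathcal{N} \rr$ to push the factor $n$ through to the right, one obtains
\[
p\cdot a_i\cdots a_m \;=\; t\cdot(ha_i)\cdot a_{i+1}\cdots a_m\cdot n'
\]
for some $n'\in\ll \mathcal{N} \rr$; dropping $n'$ gives a new word of length $\|t\|+1+(m-i)$ in the same $H_{\mu}\ll \mathcal{N} \rr$-coset as $g$. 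Minimality of $m$ forces $\|t\|=i-1$ and also forces $ha_i\ne 1$. The second part of the surgery replaces the letter $ha_i$ by any element of its $N_{\nu_i}$-coset minimising $\widehat{d}_{\nu_i}(1,\cdot)$ (such a minimiser exists because $\widehat{d}_{\nu_i}$ takes values in $\mathbb{Z}_{\geqslant 0}\cup\{+\infty\}$); this changes the represented group element by an element of $\ll \mathcal{N} \rr$ and therefore preserves both the coset and the word length.

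The main obstacle is ensuring that a later surgery at position $j>i$ does not destroy the (P3) clauses already discharged at positions $\leqslant i$. This is exactly where the prefix-closedness of $\{T_{\lambda}\}$ rescues the argument: the prefix replacement at position $j$ substitutes the first $j-1$ letters by some $t'\in T_{\nu_j}$, and because $t'$ itself satisfies (P3), every decomposition of $t'$ already respects $\{T_{\lambda}\}$, so the earlier (P3) clauses continue to hold for the updated word. Iterating the surgery through $i=1,\dots,m$ therefore produces the desired $w^{\ast}$, contradicting the maximality of $\{T_{\lambda}\}$ and establishing (P1).
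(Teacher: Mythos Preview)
Your proposal is correct and follows essentially the same strategy as the paper: apply Zorn's lemma to partial collections satisfying (P2) and (P3), take a maximal element, and derive a contradiction by constructing a new word to adjoin if (P1) fails. The only difference is cosmetic: the paper performs a single surgery at the \emph{last} $\mathcal{H}$-letter of a geodesic representative (decomposing $w\equiv uhv$ with $v$ containing no $\mathcal{H}$-letters, replacing $u$ by the appropriate $u'\in T_{\lambda}$, and minimising the letter), whereas you describe a left-to-right pass through all $\mathcal{H}$-letters. Since each of your surgeries at position $j$ overwrites the entire length-$(j-1)$ prefix with an element of $T_{\nu_j}$, the intermediate steps are redundant and your final word coincides with the paper's one-step construction; your own observation that ``$t'$ itself satisfies (P3)'' is exactly what makes the single final replacement sufficient.
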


\begin{proof}
Let $\mathcal{W}$ be the poset of collections $\{W_{\lambda}\}_{\lambda\in\Lambda}$ of words satisfying (P2) and (P3), while instead of (P1), we only demand that the words of $W_{\lambda}$ represent a subset of a transversal in $LT(H_{\lambda}\ll\mathcal{N}\rr,G)$ for every $\lambda\in\Lambda$. We order $\mathcal{W}$ by index-wise inclusion, i.e., $\{U_{\lambda}\}_{\lambda\in\Lambda}$ is less than $\{V_{\lambda}\}_{\lambda\in\Lambda}$ if and only if $U_{\lambda}\subset V_{\lambda}$ for every $\lambda\in\Lambda$. $\mathcal{W}$ is non-empty because the collection $\{W_{\lambda}\}_{\lambda\in\Lambda}$ with each $W_{\lambda}$ consisting of only the empty word is a member of $\mathcal{W}$. Moreover, the union of any chain of $\mathcal{W}$ is again a member of $\mathcal{W}$. Therefore, Zorn's lemma implies that $\mathcal{W}$ has a maximal member $\{T_{\lambda}\}_{\lambda\in\Lambda}$. Suppose that $\{T_{\lambda}\}_{\lambda\in\Lambda}$ does not satisfy (P1), i.e., there exist $\lambda_0\in\Lambda$ and $g\in G$ such that no element of the coset $gH_{\lambda_0}M$ is represented by a word in $T_{\lambda_0}$. Without loss of generality, let us assume that if $g'$ is an element of $G$ such that $|g'|<|g|$, then for each $\lambda\in\Lambda$, $g'H_{\lambda}\ll \mathcal{N} \rr\cap T_{\lambda}\neq \emptyset$.

Let $w$ be a geodesic word over $X\sqcup \mathcal{H}$ representing $g$. Consider the collection $\{U_{\lambda}\}_{\lambda\in\Lambda}$ constructed as follows. For every $\lambda\in\Lambda\setminus\{\lambda_0\}$, let $U_{\lambda}=T_{\lambda}$, and construct $U_{\lambda_0}$ by the following manner: If $w$ contains no letter from $\mathcal{H}$, let $U_{\lambda_0}=T_{\lambda_0}\cup\{w\}$. If $w$ contains at least one letter from $\mathcal{H}$, then $w$ can be decomposed as $w\equiv uhv$ such that $h\in H_{\lambda}\setminus\{1\}$ for some $\lambda\in\Lambda$ and $v$ contains no letter from $\mathcal{H}$ ($u,v$ are allowed to be empty words). As $\|u\|<\|w\|=|g|$, there exists a word $u'\in T_{\lambda}$ such that $u'\in uH_{\lambda}\ll \mathcal{N} \rr$. Let $h'$ be an element of $H_{\lambda}$ such that $u\ll \mathcal{N} \rr=u'h'\ll \mathcal{N} \rr$ and let $h''$ be an element of $H_{\lambda}$ such that (a) $h''N_{\lambda}=h'hN_{\lambda}$ and (b) if $k\in h''N_{\lambda}$, then $\widehat{d}_{\lambda}(1,h'')\leqslant\widehat{d}_{\lambda}(1,k)$. Set $U_{\lambda_0}=T_{\lambda_0}\cup\{u'h''v\}$.

It is straight-forward to verify that $\{U_{\lambda}\}_{\lambda\in\Lambda}$ is an element of $\mathcal{W}$. There is a word in $U_{\lambda_0}$ representing an element in $gH_{\lambda_0}\ll \mathcal{N} \rr$, while $T_{\lambda_0}$ has no such words. It follows that $\{U_{\lambda}\}_{\lambda\in\Lambda}$ is strictly greater than $\{T_{\lambda}\}_{\lambda\in\Lambda}$, contradicting the choice of $\{T_{\lambda}\}_{\lambda\in\Lambda}$.
\end{proof}

\section{Main theorem and its proof}\label{proof}
The following Theorem \ref{main} is the main theorem of this paper. We first use it to prove Theorem \ref{simmain}, and then we prove Theorem \ref{main}.

\begin{theorem}\label{main}
Let $G$ be a group with a family of subgroups $\{H_{\lambda}\}_{\lambda\in\Lambda}\hookrightarrow_{wh}(G,X)$ for some $X\subset G$. Then the Cohen-Lyndon property holds for all sufficiently deep Dehn fillings of $\{H_{\lambda}\}_{\lambda\in\Lambda}$.
\end{theorem}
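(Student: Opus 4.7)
The plan is to build, from the transversals of Lemma~\ref{transversal}, the natural map from an abstract free product into $\ll\mathcal{N}\rr$ and to prove that it is an isomorphism. Fix the depth constant $C=4D$ (with $D$ as in Lemma~\ref{length}), so that $\widehat{d}_\lambda(1,n)>4D$ for every $n\in N_\lambda\setminus\{1\}$ and $\lambda\in\Lambda$; see Remark~\ref{constant}. Apply Lemma~\ref{transversal} to obtain $\{T_\lambda\}_{\lambda\in\Lambda}$ satisfying (P1)--(P3), arranging that $1\in T_\lambda$ for each $\lambda$. Let $K=\ast_{\lambda\in\Lambda,\,t\in T_\lambda} N_\lambda^{(t)}$ be the formal free product indexed by pairs $(\lambda,t)$ and define $\phi\colon K\to G$ on each factor $N_\lambda^{(t)}$ by $\phi(n)=tnt^{-1}$. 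Surjectivity of $\phi$ onto $\ll\mathcal{N}\rr$ is a routine rewriting exercise: a generator $gng^{-1}$ of $\ll\mathcal{N}\rr$ (with $n\in N_\lambda$) is decomposed via (P1) as $g=thz$ with $t\in T_\lambda$, $h\in H_\lambda$, $z\in\ll\mathcal{N}\rr$, and then using normality of $N_\lambda$ in $H_\lambda$ and the identity $znz^{-1}=n\cdot[n^{-1},z]$ places $gng^{-1}$ in the subgroup generated by $\im\phi$ after an induction on $|g|$.

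The main task is injectivity. Suppose for contradiction that a nontrivial reduced word $W=t_1n_1t_1^{-1}\cdots t_kn_kt_k^{-1}\in K$, with $n_i\in N_{\lambda_i}\setminus\{1\}$, $t_i\in T_{\lambda_i}$ and consecutive pairs $(\lambda_i,t_i)\neq(\lambda_{i+1},t_{i+1})$, satisfies $\phi(W)=1$ in $G$; choose $k$ minimal. Regard $W$ as a closed loop in $\Gamma(G,X\sqcup\mathcal{H})$: by (P2) each $t_i^{\pm 1}$ is geodesic and each $n_i$ is a single edge, yielding a geodesic $3k$-gon $P$ with sides $t_1,n_1,t_1^{-1},t_2,\ldots,t_k^{-1}$. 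Combining (P1) and (P3) one sees that $t_i$ cannot end with a letter from $H_{\lambda_i}$---otherwise the prefix obtained by dropping this letter would be an element of $T_{\lambda_i}$ sharing the coset $t_iH_{\lambda_i}\ll\mathcal{N}\rr$ with $t_i$---so each $n_i$ constitutes a maximal $H_{\lambda_i}$-subpath of $P$. If every $n_i$ were an isolated $H_{\lambda_i}$-component of $P$, Lemma~\ref{length} would give $\sum_{i=1}^{k}\widehat{\ell}_{\lambda_i}(n_i)\leqslant 3Dk$, contradicting the depth bound $\widehat{\ell}_{\lambda_i}(n_i)>4D$ for each $i$.

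Hence some $n_i$ is connected to another $H_{\lambda_i}$-component of $P$, which must be either (a) another $n_j$ with $\lambda_j=\lambda_i$, or (b) a letter of $H_{\lambda_i}$ inside some $t_j^{\pm 1}$. In case (a) with $j=i+1$, the connecting edge forces $t_{i+1}\in t_iH_{\lambda_i}$, so (P1) gives $t_i=t_{i+1}$ and $\lambda_i=\lambda_{i+1}$, contradicting the reduced form of $W$. In case (b), decomposing $t_j\equiv uhv$ with $h\in H_{\lambda_i}$ and invoking (P3) places $u\in T_{\lambda_i}$ in the same coset of $H_{\lambda_i}\ll\mathcal{N}\rr$ as $t_i$, again contradicting (P1). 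The remaining non-adjacent subcase of (a) is reduced to the adjacent case by surgery: build a diagram $\Delta\in\mathcal{D}(W)$ with $k$ holes and cuts $t_i$, use Lemma~\ref{adding path} to insert the connecting edge inside $\Delta$, and cut along it to collapse two consecutive blocks of $W$, producing a reduced word $W'\in K$ with $\phi(W')=1$ of length strictly less than $k$, contradicting minimality.

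The heart of the difficulty is turning a geometric non-isolation of some $n_i$ into an algebraic reduction of the normal form $W$. The isolated-components bound via Lemma~\ref{length} disposes of the generic case at once, but the case analysis when a connection does occur is delicate: it depends crucially on the prefix-closed property (P3), which rules out stray $H_{\lambda_i}$-letters hidden inside other transversal words $t_j$, together with the surgery machinery of Section~\ref{sec.surgery}, which is needed to reduce multi-letter and non-adjacent connections back to the clean adjacent collision between two consecutive blocks of $W$. Once injectivity is established, the Cohen-Lyndon identity $\ll\mathcal{N}\rr=\prod^{\ast}_{\lambda\in\Lambda,\,t\in T_\lambda}N_\lambda^t$ follows at once.
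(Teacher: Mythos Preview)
Your injectivity argument has a genuine gap in case~(b). When $n_i$ is connected to an $H_{\lambda_i}$-letter inside some $t_j^{\pm1}$, you decompose $t_j\equiv uhv$, use (P3) to get $u\in T_{\lambda_i}$, and observe that $u$ lies in the same $H_{\lambda_i}\ll\mathcal{N}\rr$-coset as $t_i$. But this does \emph{not} contradict (P1); it is a \emph{consequence} of (P1), yielding only $u\equiv t_i$. For $j=i$ this is indeed impossible (a proper prefix of $t_i$ cannot equal $t_i$), and for $|j-i|\geqslant 2$ one can, as in the paper's Lemma~\ref{eliminate}, extract a shorter normal-form word equal to $1$ and contradict minimality. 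However, for the adjacent cases $j=i+1$ (component in $p_{t_{i+1}}$) or $j=i-1$ (component in $p_{t^{-1}_{i-1}}$) the conclusion $u\equiv t_i$ merely says that $t_i$ is a proper prefix of $t_{i\pm1}$, which is perfectly consistent with (P1)--(P3) and with $W$ being reduced. These adjacent connections cannot be ruled out, and your case analysis collapses at exactly this point. The paper devotes the bulk of its injectivity proof (Lemmas~\ref{prefix 1}--\ref{disconnect}, Procedure~\ref{pro.}, Construction~\ref{con.}) to handling them: one must merge each $n_i$ with the adjacent $H_{\lambda_i}$-letters to which it connects, build a new geodesic polygon $p$ in which the merged edges are isolated components of controlled $\widehat\ell$-length (this is where (P3)'s minimality clause on $\widehat d_\mu(1,h)$ is actually used), bound the number of ``doubly connected'' $n_i$'s, and only then apply Lemma~\ref{length} to $p$. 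None of this is present in your sketch, and the depth constant $4D$ you chose leaves no room for the required estimates (the paper uses $24D$, remarking that $4D$ suffices only after a sharper version of this missing argument).

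Your surjectivity sketch is also incomplete. Writing $g=thz$ with $z\in\ll\mathcal{N}\rr$ and invoking ``induction on $|g|$'' does not terminate: the element $z$ need not satisfy $|z|<|g|$, and expressing $z$ itself as a product of conjugates $g_jn_jg_j^{-1}$ gives no control on $|g_j|$. The paper's proof of $\ll\mathcal{N}\rr=K$ (Proposition~\ref{generate}) instead inducts on the lexicographic pair $(\|w\|,k(w))$ where $k(w)$ is the minimal number of holes in a diagram for $w$, and uses Lemma~\ref{cut} together with the extended transversals $T^{ex}_\lambda$ and Lemma~\ref{bbjs} to reduce the type at each step. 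A naive rewriting argument does not suffice here.
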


\begin{proof}[Proof of Theorem \ref{simmain}]
By assumption, $H\hookrightarrow_h (G,X)$ for some subset $X\subset G$. Let $\widehat{d}$ be the relative metric on $\Gamma(H,H)$ with respect to $X$. Theorem \ref{main} provides a constant $C$ such that if $N\lhd H$ and $\widehat{d}(n)>C$ for all $n\in N\setminus\{1\}$, then $(G,H,N)$ possesses the Cohen-Lyndon property. As $H\hookrightarrow_h (G,X)$, $\widehat{d}$ is locally finite. In particular, 
$$\mathcal{F}=\{h\in H\setminus\{1\}\mid \widehat{d}(h)\leqslant C\}$$
is a finite set. The desired result follows by noting that if $N\lhd H$ and $N\cap \mathcal{F}=\emptyset$, then $(G,H,N)$ possesses the Cohen-Lyndon property.
\end{proof}

Let us prove Theorem \ref{main}. Recall that Lemma \ref{length} provides a number $D>0$ to estimate the total length of isolated components in a (2,0)-quasi-geodesic polygon, and that Theorem \ref{Dehn filling} and Remark \ref{constant} implies that if $\widehat{d}_{\lambda}(1,n)\geqslant 4D$ for every $n\in N_{\lambda}\setminus\{1\}$ and $\lambda\in\Lambda$, then $H_{\lambda}\cap \ll\mathcal{N}\rr=N_{\lambda}$ for all $\lambda\in\Lambda$. We assume the following condition.

\begin{enumerate}
\item[(24D)] $\widehat{d}_{\lambda}(1,n)>24D$ for all $n\in N_{\lambda}\setminus\{1\}$ and $\lambda\in\Lambda$.
\end{enumerate}

We prove that (24D) implies the Cohen-Lyndon property of $(G,\{H_{\lambda}\}_{\lambda\in\Lambda},\{N_{\lambda}\}_{\lambda\in\Lambda})$. Let $\{T_{\lambda}\}_{\lambda\in\Lambda}$ be a collection of words over $X\sqcup\mathcal{H}$ satisfying (P1), (P2), and (P3) (by Lemma \ref{transversal}, such a collection exists) and think of each $T_{\lambda}$ as a left transversal in $LT(H_{\lambda}\ll\mathcal{N}\rr,G)$. For every $\lambda\in\Lambda$, we extend $T_{\lambda}$ to a set $T^{ex}_{\lambda}$. Roughly speaking, $T^{ex}_{\lambda}$ is the set of words obtained from $T_{\lambda}$ by replacing letters from $H_{\lambda}$ with other letters from the same coset of $N_{\lambda}$ in $H_{\lambda}$.

\begin{definition}\label{extended T}
For every $\lambda\in\Lambda$, let $T^{ex}_{\lambda}$ be the set of words with the following property: Every word $w\in T^{ex}_{\lambda}$ admits a decomposition $w\equiv w_1h_1\cdot\cdot\cdot w_kh_kw_{k+1}$ ($w_1,...,w_{k+1}$ are allowed to be empty words) such that for every $i\in \{1,...,k\}$, there exists $\lambda_i\in\Lambda$ with the following properties.
\begin{enumerate}
\item[(a)] For $i=1,...,k$, $h_i$ is an element of $H_{\lambda_i}$ ($h_i$ is allowed to equal $1$).
\item[(b)] There exists an element $h'_i\in H_{\lambda_i}\setminus\{1\}$ such that $h'_iN_{\lambda_i}=h_iN_{\lambda_i}$ for $i=1,...,k$, and that the concatenation $w_1h'_1\cdot\cdot\cdot w_kh'_kw_{k+1}$ is a word in $T_{\lambda}$.
\end{enumerate}
\end{definition}

\begin{remark}
If $k=0$ in the above definition, conditions (a) and (b) will be satisfied trivially. Thus, $T_{\lambda}$ is a subset of $T^{ex}_{\lambda}$ for every $\lambda\in\Lambda$.
\end{remark}

\begin{definition}
Let $w$ be a word over $X\sqcup\mathcal{H}$ and let $\lambda\in\Lambda$. If $w\in T^{ex}_{\lambda}$, let $rank_{\lambda}(w)$ be the minimal number $k$ obtained from the decompositions $w\equiv w_1h_1\cdot\cdot\cdot w_kh_kw_{k+1}$ satisfying Definition \ref{extended T}. If $w\not\in T_{\lambda}$, let $rank_{\lambda}(w)=\infty$.

For every word $w$ over $X\sqcup\mathcal{H}$, the \textit{rank} of $w$, denoted as $rank(w)$, is the number $\min_{\lambda\in\Lambda}\{rank_{\lambda}(w)\}$.
\end{definition}

\begin{lemma}\label{u4}
Let $w$ be a word in $T^{ex}_{\lambda}$ for some $\lambda\in\Lambda$. Suppose that $w$ can be decomposed as $w\equiv uhv$ with $h\in H_{\mu}\setminus\{1\}$ for some $\mu\in\Lambda$. Let $h''$ be an element of $H_{\mu}$ such that $h''N_{\mu}=hN_{\mu}$. Then $uh''v\in T^{ex}_{\lambda}$.
\end{lemma}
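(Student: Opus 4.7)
Since $w\in T^{ex}_{\lambda}$, we may fix a decomposition $w\equiv w_1h_1\cdots w_kh_kw_{k+1}$ together with elements $h'_i\in H_{\lambda_i}\setminus\{1\}$ satisfying (a) and (b) of Definition \ref{extended T}. Because the alphabet $X\sqcup\mathcal{H}$ is a disjoint union, the single letter $h\in H_{\mu}\setminus\{1\}$ occupies a uniquely determined position in the word $w$; relative to the above decomposition, that position either coincides with one of the distinguished letters $h_i$ or lies strictly inside one of the subwords $w_j$. The plan is to treat these two cases separately, producing in each case a (possibly refined) decomposition of $uh''v$ that witnesses membership in $T^{ex}_{\lambda}$ while reusing the same witness word $w_1h'_1\cdots w_kh'_kw_{k+1}\in T_{\lambda}$.

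\noindent\textbf{Case 1: $h=h_i$ for some $i$.} Here necessarily $\mu=\lambda_i$, and we have $u\equiv w_1h_1\cdots w_{i-1}h_{i-1}w_i$, $v\equiv w_{i+1}h_{i+1}\cdots w_kh_kw_{k+1}$. Replace $h_i$ by $h''$ in the decomposition, keeping all other $w_j,h_j,\lambda_j$ unchanged; the resulting word is exactly $uh''v$. To verify (b) for this new decomposition, observe that the previously chosen $h'_i$ still satisfies $h'_i\in H_{\lambda_i}\setminus\{1\}$ and $h'_iN_{\lambda_i}=h_iN_{\lambda_i}=hN_{\mu}=h''N_{\mu}$, while the witness word $w_1h'_1\cdots w_kh'_kw_{k+1}$ is unchanged and still lies in $T_{\lambda}$.

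\noindent\textbf{Case 2: $h$ is a letter inside some $w_j$.} Write $w_j\equiv u'hv'$ with $u',v'$ possibly empty. We refine the decomposition of $w$ by inserting $h$ as a new distinguished letter: $w\equiv w_1h_1\cdots w_{j-1}h_{j-1}(u')\,h\,(v')h_j\cdots w_kh_kw_{k+1}$, with $\lambda_{\text{new}}=\mu$ for this new position. For (b) at the new position, we take the corresponding witness element to be $h$ itself, which belongs to $H_{\mu}\setminus\{1\}$ by hypothesis; the associated witness word is $w_1h'_1\cdots w_{j-1}h'_{j-1}u'\,h\,v'h'_j\cdots w_kh'_kw_{k+1}$, which (since $u'hv'\equiv w_j$) equals the original witness $w_1h'_1\cdots w_kh'_kw_{k+1}\in T_{\lambda}$. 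Now substitute $h''$ for the new distinguished letter $h$: this produces $uh''v$, and the witness element $h$ still satisfies $h\in H_{\mu}\setminus\{1\}$ and $hN_{\mu}=h''N_{\mu}$, while the witness word is unchanged.

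In both cases we obtain a decomposition of $uh''v$ satisfying Definition \ref{extended T}, so $uh''v\in T^{ex}_{\lambda}$. The argument is essentially bookkeeping; the only point that requires attention is that in Case 2 the substitution is allowed precisely because $h$ is assumed nontrivial, so it can serve as its own witness in $H_{\mu}\setminus\{1\}$.
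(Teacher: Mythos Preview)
Your proof is correct and follows essentially the same approach as the paper's. The paper dispatches your Case~2 with a one-line ``without loss of generality, we may assume that $h=h_i$ for some $i$,'' which is precisely the refinement of the decomposition you carry out explicitly; your treatment makes this step clearer and is otherwise identical to the paper's argument.
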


\begin{proof}
Let $w\equiv w_1h_1\cdot\cdot\cdot w_kh_kw_{k+1}$ be a decomposition satisfying Definition \ref{extended T} and let $h'_1,...,h'_k$ be as in (b) of Definition \ref{extended T}.





Without loss of generality, we may assume that $h=h_i$ for some number $i\in \{1,...,k\}$. Then $uh''v$ can be decomposed as
$$uh''v\equiv w_1h_1\cdot\cdot\cdot w_{i-1}h_{i-1}w_ih''w_{i+1}h_{i+1}w_{i+2}h_{i+2}\cdot\cdot\cdot w_kh_kw_{k+1}.$$

By replacing $h_j$ with $h'_j$ for $j\neq i$ and $h''$ with $h'_i$, we obtain a word in $T_{\lambda}$ and thus $uh''v\in T^{ex}_{\lambda}$.
\end{proof}

\begin{lemma}\label{trunc}
Let $w$ be a word in $T^{ex}_{\lambda}$ for some $\lambda\in\Lambda$ with a decomposition $w\equiv w_1h_1\cdot\cdot\cdot w_kh_kw_{k+1}$ satisfying Definition \ref{extended T}. Then $w_1\in T_{\lambda_1}$.
\end{lemma}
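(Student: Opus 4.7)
The plan is to reduce the claim directly to property (P3) of the transversal collection $\{T_\lambda\}_{\lambda\in\Lambda}$ constructed in Lemma \ref{transversal}.

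First, I would unpack Definition \ref{extended T}. Since $w \in T^{ex}_\lambda$ and $w \equiv w_1 h_1 \cdots w_k h_k w_{k+1}$ is a decomposition satisfying that definition, there exist elements $h'_i \in H_{\lambda_i} \setminus \{1\}$ with $h'_i N_{\lambda_i} = h_i N_{\lambda_i}$ for each $i \in \{1,\dots,k\}$, such that the word
\[
w' \;\equiv\; w_1 h'_1 w_2 h'_2 \cdots w_k h'_k w_{k+1}
\]
lies in $T_\lambda$. The key observation is that $w'$ itself admits the decomposition $w' \equiv u h' v$ with $u \equiv w_1$, $h' \equiv h'_1 \in H_{\lambda_1} \setminus \{1\}$, and $v \equiv w_2 h'_2 \cdots w_k h'_k w_{k+1}$.

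Now I would apply property (P3) to this decomposition of $w' \in T_\lambda$: taking $\mu = \lambda_1$, the hypothesis $h'_1 \in H_{\lambda_1} \setminus \{1\}$ is exactly what (P3) demands, so we conclude at once that $u = w_1 \in T_{\lambda_1}$, as required.

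I do not expect any obstacle here: the content of the lemma is essentially just the observation that the prefix-closure property (P3), which was set up for the transversal $T_\lambda$, survives when we pass to the extended set $T^{ex}_\lambda$ — because membership in $T^{ex}_\lambda$ is defined precisely by reducing to membership in $T_\lambda$ after replacing each $h_i$ by a representative $h'_i$ of the same $N_{\lambda_i}$-coset. No surgery, no geometry, and no use of condition (24D) is needed; the lemma is purely a bookkeeping consequence of the definitions, and it is likely being recorded here only because it will be invoked repeatedly in the surgery arguments that follow.
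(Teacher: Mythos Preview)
Your proof is correct and is essentially identical to the paper's own argument: both take the word $w' \equiv w_1 h'_1 \cdots w_k h'_k w_{k+1} \in T_\lambda$ guaranteed by Definition \ref{extended T}, decompose it as $w_1 \cdot h'_1 \cdot (w_2 h'_2 \cdots w_k h'_k w_{k+1})$, and apply (P3) with $\mu = \lambda_1$ to conclude $w_1 \in T_{\lambda_1}$.
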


\begin{proof}
Let $h'_1,...,h'_k$ be as in (b) of Definition \ref{extended T}. Note that the word $w_1h'_1\cdot\cdot\cdot w_kh'_kw_{k+1}$ can be decomposed as $$w_1h'_1...w_kh'_kw_{k+1}\equiv w_1h'_1(w_2h'_2\cdot\cdot\cdot w_kh'_kw_{k+1}).$$
By (P3), $w_1\in T_{\lambda_1}$.
\end{proof}



It will be shown that $\ll \mathcal{N} \rr = \prod^{\ast}_{\lambda\in\Lambda,t\in T_{\lambda}}N^t_{\lambda}$. For the moment, let
$$K=\langle N^t_{\lambda},t\in T_{\lambda},\lambda\in\Lambda\rangle\leqslant G.$$

\begin{lemma}\label{inK}
Let $w$ be a word in $\bigcup_{\lambda\in\Lambda}T^{ex}_{\lambda}$, and let $n$ be an element of $N_{\lambda_0}$ for some $\lambda_0\in\Lambda$. Then $wnw^{-1}\in K$.
\end{lemma}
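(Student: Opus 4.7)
The argument proceeds by induction on $\mathrm{rank}(w)$. The inductive step reduces any $w \in T^{ex}_\lambda$ of positive rank to an element of $T_\lambda$ itself (rank $0$) by left-multiplying by an explicit element of $K$, so the problem collapses to the base case $w \in T_\lambda$.

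For the inductive step, fix $w \in T^{ex}_\lambda$ with $\mathrm{rank}(w) = k \geq 1$ and take a minimal decomposition $w \equiv w_1 h_1 \cdots w_k h_k w_{k+1}$, together with the data $\lambda_i \in \Lambda$ and $h'_i \in H_{\lambda_i} \setminus \{1\}$ from Definition \ref{extended T}(b), so that the ``replaced'' word $w'' := w_1 h'_1 \cdots w_k h'_k w_{k+1}$ lies in $T_\lambda$. Writing $h_i = h'_i \widetilde n_i$ with $\widetilde n_i \in N_{\lambda_i}$ and setting $u_i := w_1 h'_1 w_2 h'_2 \cdots w_i h'_i$, a short telescoping computation (inserting the corrections $\widetilde n_i$ one at a time after the corresponding $h'_i$ in $w''$) gives
\[
w \;=\; \Big(\prod_{i=1}^{k} u_i \widetilde n_i u_i^{-1}\Big) \cdot w''.
\]
Applying property (P3) to $w''$ at the position of each $h'_i$, the prefix $\widetilde u_i := w_1 h'_1 \cdots w_{i-1} h'_{i-1} w_i$ lies in $T_{\lambda_i}$. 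Since $u_i = \widetilde u_i h'_i$ and $N_{\lambda_i} \lhd H_{\lambda_i}$, we get $u_i \widetilde n_i u_i^{-1} = \widetilde u_i (h'_i \widetilde n_i (h'_i)^{-1}) \widetilde u_i^{-1} \in N^{\widetilde u_i}_{\lambda_i}$, a defining generator of $K$. Hence the prefactor $M := \prod_{i} u_i \widetilde n_i u_i^{-1}$ lies in $K$, and
\[
wnw^{-1} \;=\; M \cdot (w'' n w''^{-1}) \cdot M^{-1}.
\]
Because $K$ is a subgroup, it suffices to prove $w'' n w''^{-1} \in K$; since $\mathrm{rank}(w'') = 0$, this is exactly the base case.

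For the base case $w \in T_\lambda$: if $\lambda = \lambda_0$, then $wnw^{-1} \in N^w_{\lambda_0}$ is by definition a generator of $K$, and we are done. The remaining subcase $\lambda \neq \lambda_0$ is the main obstacle, and I expect it to consume most of the technical effort. My plan there is a secondary induction on $\|w\|$. If $w$ ends in an $\mathcal{H}$-letter, write $w \equiv u h$ with $h \in H_\mu \setminus \{1\}$; by (P3) we have $u \in T_\mu$, and when $\mu = \lambda_0$ we conclude immediately since $h n h^{-1} \in N_{\lambda_0}$ and $\|u\| < \|w\|$. The genuinely hard cases — $w$ ending in an $X$-letter, or ending in an $H_\mu$-letter with $\mu \neq \lambda_0$ — are where I anticipate invoking the diagram surgery of Section \ref{sec.surgery}: take a minimal-type van Kampen diagram in $\mathcal{D}(wnw^{-1})$, apply Lemma \ref{cut} (whose hypotheses are met by assumption (24D)) to locate a connected $H_\nu$-component linking a hole to the exterior, and excise that pair to peel off a generator $t n' t^{-1}$ with $t \in T_\nu$, $n' \in N_\nu$, while strictly decreasing the type of the diagram. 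Ensuring that the remaining boundary word stays in the form required to iterate the surgery, and that the excised conjugating element really does land in some $T_\nu$ (rather than merely in $G$), is the delicate point where the geodesic and prefix-closed properties (P2) and (P3) of $\{T_\lambda\}$ will need to be exploited in tandem.
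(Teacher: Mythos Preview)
Your reduction from $w\in T^{ex}_\lambda$ to $w''\in T_\lambda$ via the telescoping identity is correct and is essentially the paper's argument compressed into one step: the paper instead inducts on $\mathrm{rank}(w)$, peeling off a single substitution $h_1\mapsto h'_1$ per step (writing $w=_G(w_1n_1w_1^{-1})w'$ with $w_1\in T_{\lambda_1}$ by Lemma~\ref{trunc} and $\mathrm{rank}(w')<\mathrm{rank}(w)$), but the content is identical and either version works.

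Where you diverge is the base case. The paper's proof simply reads: ``If $\mathrm{rank}(w)=0$, then $w\in T_\mu$ and thus $wnw^{-1}\in K$.'' You are right to pause here: when $\mu\neq\lambda_0$, the element $wnw^{-1}$ lies in $N_{\lambda_0}^{w}$, which is a listed generator of $K$ only if $w\in T_{\lambda_0}$, and nothing so far forces $\mu=\lambda_0$.

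The resolution, however, is far simpler than the surgery programme you sketch. Look at how Lemma~\ref{inK} is actually invoked downstream: it appears exactly once, in Case~3 of Lemma~\ref{bbjs}, with $u\in T^{ex}_\lambda$ and $n_1\in N_\lambda$ carrying the \emph{same} index $\lambda$. So only the special case $w\in T^{ex}_{\lambda_0}$ is ever needed. Taking $\lambda=\lambda_0$ from the outset, your telescoping lands in $w''\in T_{\lambda_0}$, and $w''n(w'')^{-1}\in N_{\lambda_0}^{w''}$ is by definition a generator of $K$. The proof is then four lines; no secondary induction, no diagrams.

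The route you propose for the general base case---induction on $\|w\|$, minimal-type diagram in $\mathcal{D}(wnw^{-1})$, Lemma~\ref{cut} to excise a hole---is essentially the proof of Proposition~\ref{generate}, which establishes $K=\ll\mathcal{N}\rr$ directly and thereby implies the general statement of Lemma~\ref{inK} a posteriori. So the machinery is not wrong, just redundant here: it belongs one lemma later.
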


\begin{proof}
Let $\mu$ be an element of $\Lambda$ with $rank(w)=rank_{\mu}(w)$. Thus, $w$ admits a decomposition $w\equiv w_1h_1\cdot\cdot\cdot w_kh_kw_{k+1}$ satisfying Definition \ref{extended T} with $k=rank(w)$. We perform induction on $rank(w)$. If $rank(w)=0$, then $w\in T_{\mu}$ and thus $wnw^{-1}\in K$.

Suppose that, for all $w'\in \bigcup_{\lambda\in\Lambda}T^{ex}_{\lambda}$ with $rank(w')<rank(w)$ and all $n'\in\bigcup_{\lambda\in\Lambda}N_{\lambda}$, we have $w'^{-1}n'w'\in K$. Let $h'_1,...,h'_k$ be as in (b) of Definition \ref{extended T}. Thus, there exists $n_1\in N_{\lambda_1}$ such that $n_1h'_1=h_1$ (note that $N_{\lambda_1}$ is a normal subgroup of $H_{\lambda_1}$). Notice that
$$w=_G (w_1n_1w_1^{-1})(w_1h'_1w_2h_2\cdot\cdot\cdot w_kh_kw_{k+1})$$
and thus
\begin{equation}\label{just use to refer in ink}
wnw^{-1}=_G (w_1n_1w_1^{-1})(w'nw'^{-1})(w_1n_1w_1^{-1})^{-1},
\end{equation}
where $w'\equiv w_1h'_1w_2h_2\cdot\cdot\cdot w_kh_kw_{k+1}$.

By replacing $h_j$ with $h'_j$ for $j=2,...,k$, we can turn $w'$ into a word in $T_{\mu}$. Thus, $w'\in T^{ex}_{\mu}$ and $rank(w')\leqslant k-1 < rank(w)$. It follows from the induction hypothesis that $w'n(w')^{-1}\in K$. By Lemma \ref{trunc}, $w_1\in T_{\lambda_1}$ and thus $w_1n_1w_1^{-1}\in K$. By \eqref{just use to refer in ink}, $wnw^{-1}$ represents a product of elements of $K$.
\end{proof}

For the next two lemmas, recall that $\|w\|$ denotes the length of a word $w$ over $X\sqcup\mathcal{H}$, and that $|g|$ denotes the length of a geodesic word over $X\sqcup \mathcal{H}$ representing an element $g\in G$.

\begin{lemma}\label{bbjs}
Let $\lambda$ be an element of $\Lambda$, $u$ be a word in $T^{ex}_{\lambda}$, $h$ be a letter of $H_{\lambda}\setminus\{1\}$, and $v$ be a word over $X\sqcup\mathcal{H}$ with $\|v\|=\|u\|$. Suppose that every element $m'\in \ll\mathcal{N}\rr$ with $|m'|<2\|u\|+1$ belongs to $K$. If the concatenation $uhv\in \ll\mathcal{N}\rr$, then $uhv\in K$.
\end{lemma}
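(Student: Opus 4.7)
The plan is to use a minimal-type van Kampen diagram for $uhv$ together with Lemma \ref{cut}, peel off one conjugate that lies in $K$ via Lemma \ref{inK}, and invoke the length hypothesis on the remaining cofactor.

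First, reduce to the geodesic case. If $uhv$ is not geodesic then $|uhv| < \|uhv\| = 2\|u\|+1$, so the hypothesis immediately yields $uhv \in K$. Henceforth assume $uhv$ is geodesic. Since $uhv \in \ll\mathcal{N}\rr$, pick $\Delta \in \mathcal{D}(uhv)$ of minimal type (Definition \ref{type of diagrams}). If $\Delta$ has no holes, then $uhv =_G 1 \in K$. Otherwise, condition (24D) gives $\widehat{d}_\nu(1,n) > 4D$ for every nontrivial $n \in N_\nu$ and $\nu \in \Lambda$, so Lemma \ref{cut} produces $\mu \in \Lambda$, a component $c$ of $\partial_{int}\Delta$, and an $H_\mu$-component $p$ of $\partial_{ext}\Delta$ connected to $c$. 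Decompose the boundary label as $uhv \equiv x\, h'\, y$ where $h' \in H_\mu$ labels $p$.

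Second, peel off a $K$-element. Chasing the cut from $\partial_{ext}\Delta$ to $c$ inside $\widetilde{\Delta}$ and using the Cayley-graph edge supplied by the connection, one reads off an element $\alpha \in H_\mu$ and, setting $n'' := \alpha n \alpha^{-1}$ (where $n \in N_\mu$ is the label of $c$, and $n'' \in N_\mu$ since $N_\mu \lhd H_\mu$), obtains a surgered diagram (via Lemma \ref{adding path}, absorbing $c$ into $p$) whose boundary label $m \equiv x\,(n'' h')\,y$ satisfies
\begin{equation*}
uhv =_G \bigl(x\,(n'')^{-1}\,x^{-1}\bigr) \cdot m.
\end{equation*}
To place the first factor in $K$ via Lemma \ref{inK}, I verify $x \in T^{ex}_\mu$ in each case: when $p$ sits inside $u$, Definition \ref{extended T} lets me insert $h'$ as some $h_i$ in a decomposition of $u \in T^{ex}_\lambda$, and Lemma \ref{trunc} then yields $x \in T_\mu \subseteq T^{ex}_\mu$; when $p = h$ one has $\mu = \lambda$ and $x = u \in T^{ex}_\lambda$; when $p \subseteq y$ the same argument applies to the reversed/inverted situation, using $\|u\| = \|v\|$. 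Lemma \ref{inK} then gives $x\,(n'')^{-1}\,x^{-1} \in K$.

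Third, show $|m| < 2\|u\|+1$. The surgered diagram has one fewer hole, hence type strictly less than $\tau(\Delta)$. If $|m| \geq 2\|u\|+1$, a geodesic representative of $m$ has length exactly $2\|u\|+1$, and a minimal-type diagram $\Delta_m$ for it, attached to the small ``pod'' diagram encoding the conjugate $x(n'')^{-1}x^{-1}$, yields a member of $\mathcal{D}(uhv)$. A direct type-count, combining the minimality of $\widehat{d}_\mu$-length of the relevant letter in $T_\lambda$ provided by (P3) with the bound of Lemma \ref{length} applied to the cut $t_c$ of $\Delta$ (forced to be long by (24D) because its endpoint sits on a hole whose $\widehat{d}_\mu$-distance from $1$ exceeds $24D$), produces a diagram of type strictly less than $\tau(\Delta)$, contradicting minimality. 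Hence $|m| < 2\|u\|+1$ and $m \in K$ by hypothesis, so $uhv = (x(n'')^{-1}x^{-1}) \cdot m \in K$.

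The principal obstacle lies in the type comparison of the last step: one must carefully match the single hole and cut inserted by the ``pod'' for $x(n'')^{-1}x^{-1}$ against the hole $c$ and cut $t_c$ removed by the surgery, so that the total type strictly decreases. This is precisely where (P3)'s minimality and (24D)'s lower bound on $\widehat{d}_\mu$ work in tandem, via Lemma \ref{length}, to guarantee $\|t_c\| > \|x\|$ and deliver the contradiction.
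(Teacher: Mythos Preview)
There is a genuine gap in your third step. You try to conclude $m\in K$ by proving $|m|<2\|u\|+1$ outright, but this is simply not true in general: when the new $H_\mu$-letter $n''h'$ is nontrivial, the word $m\equiv x(n''h')y$ has length exactly $2\|u\|+1$ and may well be geodesic. Your ``type-count'' contradiction does not rescue this. The comparison you sketch requires $\|t_c\|>\|x\|$, but nothing forces the cut $t_c$ to be long: the type of $\Delta$ is minimal, so cuts are as short as possible, and Lemma~\ref{length} concerns $\widehat d_\mu$-lengths of isolated components in geodesic polygons of the Cayley graph, not word-lengths of cuts in a diagram. Condition~(24D) bounds $\widehat d_\mu(1,n)$ from below, which says nothing about $\|t_c\|$. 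What the paper does instead is set up an induction on the number of holes of $\Delta$: after the surgery one obtains a diagram $\Delta_2\in\mathcal D(u_4hv)$ with strictly fewer holes, checks (via Lemma~\ref{u4}) that $u_4\in T^{ex}_\lambda$ so that the new word again has the shape required by the lemma, and then invokes the induction hypothesis. You have omitted this induction entirely, and without it the argument does not close.

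There is also a problem in your second step when the $H_\mu$-component $p$ lies inside $v$. You assert that ``the same argument applies to the reversed/inverted situation'' and that $x\in T^{ex}_\mu$, but here $x\equiv u\,h\,v_1$ for some proper prefix $v_1$ of $v$, and $v$ is an arbitrary word; there is no reason for $x$ to lie in any $T^{ex}_\mu$, so Lemma~\ref{inK} is unavailable. The paper handles this case (its Case~2) not via Lemma~\ref{inK} but via the length hypothesis: factoring on the right, the conjugating word is a proper suffix $v_2$ of $v$ with $\|v_2\|<\|v\|=\|u\|$, so $|v_2^{-1}n''v_2|<2\|u\|+1$ and the hypothesis applies directly. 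Lemma~\ref{inK} is used only in Case~3, where $p$ is the middle edge $p_h$ and the conjugator is $u$ itself.
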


\begin{proof}
If $uhv$ is not a geodesic word, the desired result will follow from the assumptions trivially. So let us assume that $uhv$ is geodesic. Consider a diagram $\Delta\in\mathcal{D}(w)$ of minimal type (see Definition \ref{type of diagrams}).

We prove Lemma \ref{bbjs} by an induction on the number of holes in $\Delta$. If $\Delta$ has no holes, then it will be a disk van Kampen diagram over \eqref{pre1} with boundary labeled by $uhv$ and thus $uhv$ represents $1\in K$.

Suppose that $\Delta$ has $k\geqslant 1$ holes. By Lemma \ref{cut}, there exists $\mu\in\Lambda$ and a connected component $c$ of $\partial_{int}\Delta$ such that $c$ is connected to an $H_{\mu}$-component of $\partial_{ext}\Delta$. Let $w$ be the label of $c$. Then $w$ is a word over $H_{\mu}$ representing an element $n\in N_{\mu}$. As $Lab(\partial_{ext}\Delta)\equiv uhv$, we can use Remark \ref{decompose loops on diagrams} to decompose $\partial_{ext}\Delta$ as the concatenation $p_up_hp_v$ of three paths $p_u,p_h$, and $p_v$ with $Lab(p_u)\equiv u, Lab(p_h)\equiv h,Lab(p_v)\equiv v$. Depending on where $c$ is connected to, there are three possible cases.


\textit{Case 1:} $c$ is connected to an $H_{\mu}$-component of $p_u$.

In other words, $u$ can be decomposed as $u\equiv u_1h_1u_2$ with $h_1\in H_{\mu}\setminus\{1\}$, and $p_u$ can be decomposed as a concatenation $p_{u_1}p_{h_1}p_{u_2}$ of three paths $p_{u_1},p_{h_1}$, and $p_{u_2}$ such that $Lab(p_{u_1})\equiv u_1, Lab(p_{h_1})\equiv h_1,Lab(p_{u_2})\equiv u_2$ and $c$ is connected to $p_{h_1}$ (see Remark \ref{decompose paths on diagrams}). By Lemma \ref{adding path}, passing to an equivalent diagram if necessary, we may assume that there exists a path $p_{h_2}$ in $\Delta$ with $Lab(p_{h_2})\equiv h_2\in H_{\mu}$, connecting the common vertex of $p_{h_1}$ and $p_{u_1}$ to a vertex of $c$. Note that the conjugate $n_1=h_2nh_2^{-1}\in N_{\mu}$. Let $h_3$ be the letter from $H_{\mu}$ such that $h_3=_Gn_1h_1$. Then
\begin{equation}\label{used in bbjs}
uhv\equiv u_1h_1u_2hv=_G (u_1n^{-1}_1u^{-1}_1)(u_1h_3u_2hv).
\end{equation}

As $h_1\neq 1$, we have $\|u_1\|\leqslant \|u\|-1$ and thus $\|u_1n^{-1}_1u^{-1}_1\|\leqslant 2\|u_1\|-1<2\|u\|+1$. Note that $u_1n^{-1}_1u^{-1}_1\in \ll\mathcal{N}\rr$. By the induction hypothesis, $u_1n^{-1}_1u^{-1}_1\in K$.

Let $u_4\equiv u_1h_3u_2$. Note that $\|u_4\|\leqslant \|u\|$. As $uhv,u_1n^{-1}_1u^{-1}_1\in \ll\mathcal{N}\rr$, it follows from \eqref{used in bbjs} that $u_4hv\in \ll\mathcal{N}\rr$. If $\|u_4\|<\|u\|$, then $\|u_4hv\|<2\|u\|+1$ and thus $u_4hv\in K$, by assumption. So let us assume that $\|u_4\|=\|u\|$. By Lemma \ref{u4}, $u_4\in T^{ex}_{\lambda}$. Let $\Sigma$ be a disc van Kampen diagram over \eqref{pre1} such that
$$Lab(\partial\Sigma)\equiv h_2wh^{-1}_2h_1h^{-1}_3.$$
Cut $\Delta$ along the path $p_{h_2}$ to produce a diagram $\Delta_1\in\mathcal{D}$ with 
$$Lab(\partial_{ext}\Delta_1)\equiv u_1h_2wh^{-1}_2h_1u_2hv.$$
Glue $\Sigma$ to $\Delta_1$ by identifying the paths with label $h_2wh^{-1}_2h_1$ (perform refinements if the non-essential edges of the two paths do not match) to construct a diagram $\Delta_2\in\mathcal{D}$ with $$Lab(\partial_{ext}\Delta_2)\equiv u_4hv$$
(see Figure \ref{cut1}). Note that the number of holes in $\Delta_2$ is strictly less than that of $\Delta$. By the induction hypothesis, $u_4hv\in K$. By \eqref{used in bbjs}, $uhv$ is a product of elements of $K$.

\textit{Case 2:} $c$ is connected to an $H_{\mu}$-component of $p_v$.

This case is symmetric to Case 1 and the proof is left to the reader.


\textit{Case 3}: $c$ is connected to $p_h$.

In other words, $\mu=\lambda$ and $h\in H_{\lambda}\setminus\{1\}$. By Lemma \ref{adding path} and passing to an equivalent diagram if necessary, we may assume that there exists a path in $\Delta$, labeled by a letter $h_1\in H_{\lambda}$, connecting the common vertex of $p_h$ and $p_u$ to a vertex of $c$. Note that the conjugate $n_1=h_1nh_1^{-1}\in N_{\lambda}$. Let $h_2$ be a letter from $H_{\lambda}$ such that $h_2=_G n_1h$. Consider the equality
\begin{equation}\label{used in bbjs 1}
uhv=_G (un^{-1}_1u^{-1})(uh_2v).
\end{equation}

As $u\in T^{ex}_{\lambda}$, Lemma \ref{inK} implies that $un^{-1}_1u^{-1}\in K$. An analysis similar to the one in Case 1 (with $uh_2v$ in place of $u_4hv$) shows that $uh_2v\in K$. By \eqref{used in bbjs 1}, $uhv$ is a product of elements of $K$.
\end{proof}

\begin{figure}
{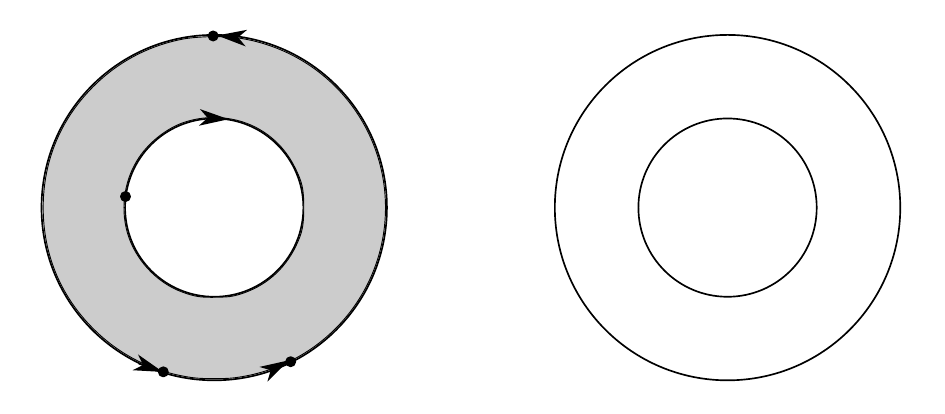}
\caption{An illustration of Case 1 in the proof of Lemma \ref{bbjs}}\label{cut1}
\end{figure}

\begin{definition}
Let $w$ be a word representing an element of $\ll\mathcal{N}\rr$. Define the number $k(w)$ to be the minimal number of holes of a diagram $\Delta\in\mathcal{D}(w)$. The \textit{type} of $w$ is the pair $\tau(w)=(\|w\|,k(w))$. We order the set of types lexicographically (see Definition \ref{type of diagrams}).
\end{definition}

\begin{remark}
If $w$ is a word representing an element of $\ll\mathcal{N}\rr$ and $\Delta$ is a diagram in $\mathcal{D}(w)$ of minimal type, then $\Delta$ necessarily has $k(w)$ holes.
\end{remark}

\begin{proposition}\label{generate}
$\ll\mathcal{N}\rr=K$.
\end{proposition}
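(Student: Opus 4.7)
The inclusion $K \subseteq \ll\mathcal{N}\rr$ is immediate, since every generator $N^t_\lambda$ of $K$ is a conjugate of a subgroup of $\mathcal{N}$ and $\ll\mathcal{N}\rr$ is normal in $G$. For the reverse inclusion, my plan is to prove by induction on the type $\tau(w)=(\|w\|,k(w))$, ordered lexicographically, that every word $w$ representing an element of $\ll\mathcal{N}\rr$ lies in $K$. The base cases $\|w\|=0$ and $k(w)=0$ both force $w=_G 1\in K$ (in the second case because a diagram in $\mathcal{D}(w)$ with no holes is just a disc diagram over \eqref{pre1}). For the inductive step, after replacing $w$ by a geodesic representative of smaller length if necessary, fix a diagram $\Delta\in\mathcal{D}(w)$ of minimal type; it has $k(w)\geq 1$ holes.

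Because the standing assumption (24D) implies $\widehat{d}_\lambda(1,n)>4D$, Lemma \ref{cut} applies and supplies a hole $c$ of $\partial_{int}\Delta$ connected to an $H_\mu$-component $p_h$ of $\partial_{ext}\Delta$. This yields a decomposition $w\equiv uhv$ with $h\in H_\mu$. I then intend to mimic the surgery of Case 3 in the proof of Lemma \ref{bbjs}: using Lemma \ref{adding path}, pass to an equivalent diagram carrying a path $p_{h_1}$ from a chosen endpoint of $p_h$ to a vertex of $c$, labeled by some $h_1\in H_\mu$. Writing $n\in N_\mu$ for the label of $c$, set $n_1=h_1nh_1^{-1}\in N_\mu$ and let $h_2\in H_\mu$ be the letter with $h_2=_G n_1 h$. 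Then $w=_G (un_1^{-1}u^{-1})(uh_2v)$, and the second factor $B:=uh_2v$ bounds a diagram in $\mathcal{D}$ with $k(w)-1$ holes (cut along $p_{h_1}$ and fill with a disc diagram for the trivial word $h_1nh_1^{-1}hh_2^{-1}$, exactly as in Figure \ref{cut1}), so $\tau(B)<\tau(w)$ and $B\in K$ by induction.

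The first factor $A:=un_1^{-1}u^{-1}$ has word length $2\|u\|+1$. By choosing which endpoint of $p_h$ the path $p_{h_1}$ attaches to (the opposite choice yields a symmetric surgery with a factor of length $2\|v\|+1$), I may assume $\|u\|\leq\|v\|$. When $\|u\|<\|v\|$, one has $\|A\|\leq\|w\|-1$, hence $\tau(A)<\tau(w)$, so $A\in K$ by induction and $w=AB\in K$. The main obstacle is the balanced case $\|u\|=\|v\|$, in which $\|A\|=\|w\|$ and induction does not directly cover $A$. Here I plan to invoke Lemma \ref{bbjs} applied to the decomposition $w\equiv uhv$: the inductive hypothesis supplies the assumption that every element of $\ll\mathcal{N}\rr$ of length less than $2\|u\|+1=\|w\|$ lies in $K$. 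The delicate remaining point is verifying the hypothesis $u\in T^{ex}_\mu$; I expect this to follow by exploiting the prefix-closure property (P3) of $\{T_\lambda\}_{\lambda\in\Lambda}$ together with the extension procedure of Definition \ref{extended T}, possibly after readjusting $\Delta$ via Lemma \ref{adding path} so that the $H_\mu$-connection lands at a position making the prefix $u$ completable to a word in $T_\lambda$ for some $\lambda$. This verification is where I anticipate the bulk of the technical work.
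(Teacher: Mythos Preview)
Your overall architecture matches the paper's proof: induction on $\tau(w)$, reduction to geodesic $w$, Lemma \ref{cut} to locate a hole connected to an $H_\mu$-component, and the surgery of Figure \ref{cut1} to peel off one hole and drop the type of the ``long'' factor $uh_2v$. The difficulty you correctly isolate --- the balanced case $\|u\|=\|v\|$ --- is the real crux, but your plan for it does not work.

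The gap is that there is simply no mechanism to force the prefix $u$ of an \emph{arbitrary} geodesic word $w\in\ll\mathcal{N}\rr$ to lie in $T^{ex}_\mu$. Property (P3) is a statement about prefixes of words \emph{already in} some $T_\lambda$; it says nothing about prefixes of $w$. Lemma \ref{adding path} modifies the diagram $\Delta$ but leaves the boundary word $w$, hence the decomposition $w\equiv uhv$, untouched. So neither tool can ``complete'' $u$ to a word in $T^{ex}_\mu$, and the hypothesis of Lemma \ref{bbjs} is genuinely unavailable for this $u$. (A secondary attempt --- bounding $k(un_1^{-1}u^{-1})$ --- also fails: that word has $k=1$, which does not beat $k(w)$ when $k(w)=1$.)

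The paper's fix is to \emph{manufacture} the needed transversal element rather than hope the prefix already is one. Assuming $\|v\|\leqslant(\|w\|-1)/2$, choose $t\in T_\lambda$ with $tH_\lambda\ll\mathcal{N}\rr=v^{-1}H_\lambda\ll\mathcal{N}\rr$, so $th_3v\in\ll\mathcal{N}\rr$ for some $h_3\in H_\lambda$, and factor
\[
w \;=_G\; (uh_2v)\,(v^{-1}h_3^{-1}t^{-1})\,(tn_1t^{-1})\,(th_3v).
\]
Now $tn_1t^{-1}\in K$ by definition of $K$; the factor $uh_2v$ has strictly smaller type by the surgery you already described; and the potentially dangerous factor is $th_3v$, whose left piece $t$ lies in $T_\lambda\subset T^{ex}_\lambda$ \emph{by construction}. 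Property (P2) gives $\|t\|\leqslant\|v\|$, so either $\|th_3v\|<\|w\|$ (induction) or $\|t\|=\|v\|$ and $h_3\neq 1$, in which case Lemma \ref{bbjs} applies directly. This four-factor decomposition, with the deliberately inserted transversal representative, is the idea your proposal is missing.
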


\begin{proof}
Clearly, each of the groups $N^t_{\lambda},t\in T_{\lambda},\lambda\in\Lambda,$ is contained in $\ll\mathcal{N}\rr$ and thus $K \leqslant \ll\mathcal{N}\rr$. Let $w$ be a word over $X\sqcup \mathcal{H}$ such that $w\in\ll\mathcal{N}\rr$. Let us show that $w\in K$ by performing induction on the type of $w$. Note that the base case $\|w\|=k(w)=0$ is trivial.

Suppose that, for every word $w'$ over $X\sqcup\mathcal{H}$ with $w'\in \ll\mathcal{N}\rr$, $\tau(w')<\tau(w)$ implies that $w'\in K$. If $w$ is not a geodesic word, the induction hypothesis will imply $w\in K$. Thus, we may assume that $w$ is geodesic. Consider a diagram $\Delta\in\mathcal{D}(w)$ of minimal type.

By Lemma \ref{cut}, there exist $\lambda\in\Lambda$ and a connected component $c$ of $\partial_{int}\Delta$ connected to an $H_{\lambda}$-component of $\partial_{ext}\Delta$. In other words, $w$ can be decomposed as $uhv$ with $h\in H_{\lambda}\setminus\{1\}$ ($u,v$ are allowed to be empty words), and $\partial_{ext}\Delta$ can be decomposed as a concatenation $p_up_hp_v$ of three paths $p_u,p_h$, and $p_v$ such that $Lab(p_u)=u,Lab(p_h)=h,Lab(p_v)=v$ and $c$ is connected to $p_h$ (see Remark \ref{decompose loops on diagrams}). By Lemma \ref{adding path} and passing to an equivalent diagram if necessary, we may assume that there exists a path $p_{h_1}$ in $\Delta$ with $Lab(p_{h_1})\equiv h_1\in H_{\lambda}$, connecting the common vertex of $p_h$ and $p_u$ to a vertex of $c$.

Note that, as $h\neq 1$, at least one of $\|u\|$ and $\|v\|$ is at most $(\|w\|-1)/2$. Without loss of generality, we may assume that $\|v\|\leqslant (\|w\|-1)/2$. The case $\|u\|\leqslant (\|w\|-1)/2$ can be analyzed in almost the same way (or just by considering $w^{-1}$ and reversing every edge of $\Delta$ if one wishes).

Let $w_1\equiv Lab(c)$. Thus, $w_1\in N_{\lambda}$. Let $h_2$ be a letter from $H_{\lambda}$ such that $h_2=_G hh_1nh^{-1}_1$. There exists $t\in T_{\lambda}$ such that $t$ and $v^{-1}$ are in the same left $H_{\lambda}\ll \mathcal{N} \rr$-coset. In other words, there exists $h_3\in H_{\lambda}$ such that $th_3v\in \ll\mathcal{N}\rr$. Let $n_1$ be a letter in $N_{\lambda}$ such that $n_1=_G h_3h_1n^{-1}h^{-1}_1h^{-1}_3$.

Consider the equality
\begin{equation}\label{factor}
w\equiv uhv=_G (uh_2v)(v^{-1}h^{-1}_3t^{-1})(tn_1t^{-1})(th_3v).
\end{equation}

Note that $uh_2v\in \ll\mathcal{N}\rr$, as all other brackets in \eqref{factor} represents elements of $\ll\mathcal{N}\rr$. As in the proof of Lemma \ref{bbjs}, let $\Sigma$ be a disc van Kampen diagram over \eqref{pre1} with $$Lab(\partial\Sigma)\equiv hh_1w_1h^{-1}_1h^{-1}_2.$$
Cut $\Delta$ along $p_{h_1}$ to produce a diagram $\Delta_1\in\mathcal{D}$ with $Lab(\partial_{ext}\Delta_1)\equiv uhh_1w_1h^{-1}_1v$. Glue $\Delta_1$ to $\Sigma$, identifying the paths labeled by $hh_1w_1h^{-1}_1$ (perform refinements if the non-essential edges of the two paths do not match). Denote the resulting diagram by $\Delta_2$. Clearly, $\Delta_2\in\mathcal{D}$ and $Lab(\partial_{ext}\Delta_2)\equiv uh_2v$. Note that the number of holes in $\Delta_2$ is strictly less than that of $\Delta$, and that $\|uh_2v\|\leqslant \|u\|+\|v\|+1=\|uhv\|$, as $uhv$ is a geodesic word. Thus, $\tau(uh_2v)<\tau(w)$ and the induction hypothesis implies $uh_2v\in K$.

Clearly, $tn_1t^{-1}\in K$. Note also that $th_3v\in K$. Indeed, if either $\|t\|<\|v\|$ or $h_3=1$, then $\|th_3v\|<2\|v\|+1=\|w\|$ and the induction hypothesis implies that $th_3v\in K$. If $\|t\|=\|v\|$ and $h_3\neq 1$, then Lemma \ref{bbjs} implies $th_3v\in K$.

As $v^{-1}h^{-1}_3t^{-1}\equiv (th_3v)^{-1}$, we also have $v^{-1}h^{-1}_3t^{-1}\in K$. By \eqref{factor}, $w$ is a product of elements of $K$.
\end{proof}

The cutting process in the proof of Lemma \ref{generate} is exactly the same as the one for Lemma \ref{bbjs}. See Figure \ref{cut1} for an illustration.

The goal of the rest of this section is to prove the following.

\begin{proposition}\label{free}
$\ll\mathcal{N}\rr=\prod^{\ast}_{\lambda\in\Lambda,t\in T_{\lambda}}N^t_{\lambda}$.
\end{proposition}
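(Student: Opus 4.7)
The plan is to prove Proposition \ref{free} by contradiction via a closed geodesic polygon in $\Gamma(G,X\sqcup\mathcal{H})$: the isolated-component estimate of Lemma \ref{length} together with the deep-filling condition (24D) will conflict with the $\widehat\ell$-length contribution of the $n_i$-edges. Specifically, assume the claim fails. By Proposition \ref{generate}, $\ll\mathcal{N}\rr = K$, so there is a nontrivial reduced relation $g_1 g_2 \cdots g_n = 1$ in $G$, where $g_i = t_i n_i t_i^{-1}$ with $n_i \in N_{\lambda_i}\setminus\{1\}$, $t_i \in T_{\lambda_i}$, and $(\lambda_i, t_i) \neq (\lambda_{i+1}, t_{i+1})$ for $1\leq i < n$. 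Choose such a relation with $n$ minimal. Since cyclic conjugation $g_1\cdots g_n \mapsto g_2\cdots g_n g_1$ preserves the relation, we may further assume it is cyclically reduced, i.e.\ $(\lambda_1, t_1) \neq (\lambda_n, t_n)$, as otherwise combining the matching end-factors yields a strictly shorter reduced relation. Let $u_i$ be the word in $T_{\lambda_i}$ representing $t_i$; by (P2) each $u_i$ is geodesic over $X\sqcup\mathcal{H}$.

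A first observation is that no $u_i$ can end in an $H_{\lambda_i}$-letter: if $u_i \equiv w\sigma$ with $\sigma \in H_{\lambda_i}\setminus\{1\}$, property (P3) yields $w \in T_{\lambda_i}$, yet $w$ and $u_i$ represent the same coset of $H_{\lambda_i}\ll\mathcal{N}\rr$, contradicting (P1). Therefore, in the closed loop $\gamma$ in $\Gamma(G,X\sqcup\mathcal{H})$ labelled by $u_1 n_1 u_1^{-1} \cdots u_n n_n u_n^{-1}$, each $n_i$-edge is a maximal $H_{\lambda_i}$-subpath, hence an $H_{\lambda_i}$-component of the geodesic $3n$-gon $\gamma$.

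The crux is to show every $n_i$-edge is \emph{isolated} in $\gamma$. A failure of isolation would be an $H_{\lambda_i}$-edge of $\Gamma(G,X\sqcup\mathcal{H})$ connecting an endpoint of $n_i$ either to an endpoint of another $n_j$, or to an endpoint of an $H_{\lambda_i}$-letter buried inside some $u_k^{\pm 1}$. In either case, tracing endpoints through $\gamma$ and invoking (P1), (P3), and $H_\lambda\cap\ll\mathcal{N}\rr = N_\lambda$ (Theorem \ref{Dehn filling}(a)), one shows the connection forces two elements of $T_{\lambda_i}$ to represent the same coset of $H_{\lambda_i}\ll\mathcal{N}\rr$ and hence to coincide, after which the intermediate subproduct of the $g_k$'s collapses to a single element of some $N^{t}_{\lambda_i}$. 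Combining this element with an adjacent factor yields a strictly shorter nontrivial reduced relation (the cyclic-reducedness assumption rules out the lone degenerate case $(i,j)=(1,n)$), contradicting the minimality of $n$.

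Finally, with isolation established, Lemma \ref{length} applied to $\gamma$ gives
\[
\sum_{i=1}^n \widehat\ell_{\lambda_i}(n_i) \leq 3Dn,
\]
while (24D) supplies $\widehat\ell_{\lambda_i}(n_i) > 24D$ for each $i$, so $24Dn < 3Dn$, a contradiction. The main obstacle is the isolation step: converting an alleged connecting $H_{\lambda_i}$-edge into a genuinely strictly shorter reduced relation requires careful surgery on the polygon in the spirit of the diagram manipulations in Lemma \ref{bbjs} and the proof of Proposition \ref{generate}.
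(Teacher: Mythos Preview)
Your outline has a genuine gap at the isolation step. You assert that if some $n_i$-edge fails to be isolated in the geodesic $3n$-gon $\gamma$, then the connection produces a strictly shorter reduced relation. This is correct for ``distant'' connections---when $p_{n_i}$ is connected to some $p_{n_j}$ with $j\notin\{i-1,i,i+1\}$, or to an $H_{\lambda_i}$-letter inside some $u_k^{\pm1}$ with $k\notin\{i-1,i,i+1\}$---because then there is a nontrivial intermediate subproduct of the $g_k$'s to collapse, exactly as in the paper's Cases~3--5 of Lemma~\ref{eliminate}. But the argument breaks down when $p_{n_i}$ is connected to an $H_{\lambda_i}$-component of the \emph{adjacent} side $p_{t_{i+1}}$ (or $p_{t_{i-1}^{-1}}$). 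In that situation Lemma~\ref{prefix 1} gives $t_{i+1}\equiv t_i h v$ with $h\in H_{\lambda_i}\setminus\{1\}$; the two transversal elements that your argument produces are $t_i$ and the prefix $u$ of $t_{i+1}$, and they do coincide, but the ``intermediate subproduct of the $g_k$'s'' between them is empty. There is nothing to collapse, the factors $g_i$ and $g_{i+1}$ still carry distinct data $(\lambda_i,t_i)\neq(\lambda_{i+1},t_{i+1})$, and no shorter relation emerges. So the claim that every $n_i$-edge is isolated is simply false in general, and your final inequality $24Dn<3Dn$ is not available.

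The paper does not try to force isolation. Instead it accepts that $p_{n_i}$ may connect to adjacent sides and \emph{merges} those connected $H_{\lambda_i}$-components into a single edge $p_{h_i}$ (Procedure~\ref{pro.} and Construction~\ref{con.}), obtaining a new geodesic polygon $p$ with at most $3k$ sides in which each $p_{h_i}$ is isolated by construction. The cost is that $\widehat\ell_{\lambda_i}(p_{h_i})$ may drop from $>24D$ to only $>12D$ (Lemmas~\ref{prefix 1}--\ref{prefix 2}), and in the doubly connected case there is no lower bound at all; a separate counting argument (Lemma~\ref{disconnect} and the bound $\mathrm{card}(I_1)\leqslant(k-1)/2$) controls how often that happens. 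Applying Lemma~\ref{length} to $p$ then yields $6D(k-1)<3kD$, hence $k<2$. Your sketch would need this entire merging-and-counting apparatus to close the gap.
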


\begin{proof}
Assume, for the contrary, that there exists a word 
\begin{equation}\label{star}
z\equiv \prod_{i=1}^k t_i n_i t_i^{-1}
\end{equation}
representing $1\in G$ such that
\begin{enumerate}
\item[(Z1)] $k\geqslant 2$;
\item[(Z2)] for $i=1,...,k$, there exists $\lambda_i\in\Lambda$ such that $n_i\in N_{\lambda_i}\setminus\{1\}$ and $t_i\in T_{\lambda_i}$;
\item[(Z3)] for $i=1,...,k$, either $\lambda_i\neq\lambda_{i+1}$ or $t_i\not\equiv t_{i+1}$ (subscripts are modulo $k$, i.e., $n_{k+1}=n_1$, $t_0=t_k$, etc.).
\end{enumerate}

Without loss of generality, we may also assume

\begin{enumerate}
\item[(Z4)] $z$ is minimal, i.e., has the minimal $k$ among all other words of the form \eqref{star} representing $1$ in $G$ and satisfying (Z1), (Z2), and (Z3).
\end{enumerate}

The main idea of the proof of Lemma \ref{free} is to show that the existence of such a word $z$ contradicts Lemma \ref{length}. For this purpose, it is convenient to first cyclically permute $z$ and consider the word
$$w\equiv t^{-1}_k(\prod_{i=1}^{k-1} t_i n_i t_i^{-1})t_kn_k.$$

In what follows, subscripts are modulo $k$. Let $p_w$ be the path in $\Gamma(G,X\sqcup \mathcal{H})$ with $Lab(p)\equiv w$ and $p^-=1$. We use $p_{n_i},p_{t^{\pm1}_i}$ to denote subpaths of $p_w$ labeled by $n_i,t^{\pm1}_i$, respectively. More precisely, $p_{n_i}$ (resp. $p_{t_i},p_{t^{-1}_i}$) will denote the path in the Cayley graph $\Gamma(G,X\sqcup\mathcal{H})$ with $Lab(p_{n_i})=n_i$ (resp. $Lab(p_{t_i})=t_i,Lab(p_{t^{-1}_i})=t^{-1}_i$) and $p^-_{n_i}=t^{-1}_k(\prod_{j=1}^{i-1}t_jn_jt^{-1}_j)t_i$ (resp. $p^-_{t_i}=t^{-1}_k(\prod_{j=1}^{i-1}t_jn_jt^{-1}_j),p^-_{t^{-1}_i}=t^{-1}_k(\prod_{j=1}^{i-1}t_jn_jt^{-1}_j)t_in_i$).

Recall that the collection $\{T_{\lambda}\}_{\lambda\in\Lambda}$ satisfies (P1), (P2), and (P3). Note that, for every $\lambda\in\Lambda$ and every word $t\in T_{\lambda}$, the word $t$ does not end with a letter from $H_{\lambda}$, by (P2). It follows that $p_{n_i}$ is an $H_{\lambda_i}$-component of $p_w$ for $i=1,...,k$. Being a cyclic permutation of $z$, the word $w$ represents $1$ in $G$ and thus the terminal vertex of $p_w$ is $1$. Hence, $p_w$ is a geodesic $3k$-gon. As $\widehat{\ell}_{\lambda_i}(p_{n_i})=\widehat{d}_{\lambda_i}(1,n_i)$ for $i=1,...,k$, by Lemma \ref{length} and (24D), there exists some $i\in \{1,...,k\}$ such that $p_{n_i}$ is not an isolated $H_{\lambda_i}$-component of $p_w$.

The rest of the proof is divided into several lemmas. All of them are stated under the assumptions (and using the notations) of Proposition \ref{free}.

\begin{lemma}\label{eliminate}
If $p_{n_i}$ is not an isolated $H_{\lambda_i}$-component of $p_w$ for some $i\in \{1,...,k\}$, then there are only three possibilities:
\begin{enumerate}
\item[(a)] $p_{n_i}$ is connected to an $H_{\lambda_i}$-component of $p_{t_{i+1}}$, but not connected to any $H_{\lambda_i}$-component of $p_{t^{-1}_{i-1}}$.
\item[(b)] $p_{n_i}$ is connected to an $H_{\lambda_i}$-component of $p_{t^{-1}_{i-1}}$, but not connected to any $H_{\lambda_i}$-component of $p_{t_{i+1}}$.
\item[(c)] $p_{n_i}$ is connected to both an $H_{\lambda_i}$-component of $p_{t_{i+1}}$ and an $H_{\lambda_i}$-component of $p_{t^{-1}_{i-1}}$.
\end{enumerate}
\end{lemma}

\begin{proof}
Without loss of generality, let us assume that $p_{n_1}$ is not isolated in $p_w$. There are six cases to consider (see Figure \ref{illustrateeliminate} for an illustration).

\textit{Case 1:} $p_{n_1}$ is connected to an $H_{\lambda_1}$-component of either $p_{t_1}$ or $p_{t_1^{-1}}$. In this case, some terminal segment of $t_1$ represents an element of $H_{\lambda_1}$, which contradicts (P2).

\textit{Case 2:} $p_{n_1}$ is connected to either $p_{n_2}$ or $p_{n_k}$. If $p_{n_1}$ is connected to $p_{n_2}$, then $\lambda_1=\lambda_2$, which in turn implies $t_1,t_2\in T_{\lambda_1}$. The assumption that $p_{n_1}$ is connected to $p_{n_2}$ also implies $t^{-1}_1t_2\in H_{\lambda_1}$. By (P1), $t_1\equiv t_2$, which contradicts (Z3) as $\lambda_1=\lambda_2$. The analysis for the subcase where $p_{n_1}$ is connected to $p_{n_k}$ is similar.

\textit{Case 3:} $p_{n_1}$ is connected to $p_{n_i}$ for some $i\in \{3,...,k-1\}$. In other words, there exists $h\in H_{\lambda_1}$ such that the word
\begin{equation*}
u\equiv t_1^{-1}(\prod_{j=2}^{i-1}t_jn_jt_j^{-1})t_ih
\end{equation*}
represents $1$ in $G$. As $\prod_{j=2}^{i-1}t_jn_jt_j^{-1}\in \ll\mathcal{N}\rr\lhd G$, we have $t_1^{-1}t_i\in H_{\lambda_1}\ll\mathcal{N}\rr$. The assumption that $p_{n_1}$ is connected to $p_{n_i}$ also implies $n_1,n_i\in N_{\lambda_1}$ and thus $t_1,t_i\in T_{\lambda_1}$. By (P1), $t_1\equiv t_i$. Thus, the word
\begin{equation*}
u'\equiv t_1ht_1^{-1}(\prod_{j=2}^{i-1}t_jn_jt_j^{-1})
\end{equation*}
is a cyclic permutation of $u$ and represents $1$ in $G$. It follows that $t_1ht_1^{-1}\in\ll\mathcal{N}\rr$. By Theorem \ref{Dehn filling}, Remark \ref{constant}, and Condition (24D), we have $h\in N_{\lambda_1}$. Then the word $t_1ht_1^{-1}(\prod_{j=2}^{i-1}t_jn_jt_j^{-1})$ represents $1$ in $G$, contradicting (Z4).

\textit{Case 4:} $p_{n_1}$ is connected to an $H_{\lambda_1}$-component of $p_{t_i}$ for some $i\in \{3,...,k\}$. Thus, $t_i$ can be decomposed as $t_i\equiv t'_ih't''_i$ with $h'\in H_{\lambda_1}\setminus\{1\}$ and there exists $h\in H_{\lambda_1}$ such that the word
\begin{equation*}
u\equiv t_1^{-1}(\prod_{j=2}^{i-1}t_jn_jt_j^{-1})t'_ih
\end{equation*}
represents $1$ in $G$. By (P3), $t'_i$ belongs to $T_{\lambda_1}$. Arguing as in Case 3, we conclude that the word $t_1ht_1^{-1}(\prod_{j=2}^{i-1}t_jn_jt_j^{-1})$ represents $1$ in $G$, contradicting (Z4). 


\textit{Case 5:} $p_{n_1}$ is connected to an $H_{\lambda_1}$-component of $p_{t_i^{-1}}$ for some $i\in \{2,...,k-1\}$. This case can be reduced to Case 4 by considering $w^{-1}$.


Thus, the only possibilities left are (a), (b), and (c).
\end{proof}

\begin{figure}
\begin{center}
\resizebox{0.95\linewidth}{!}{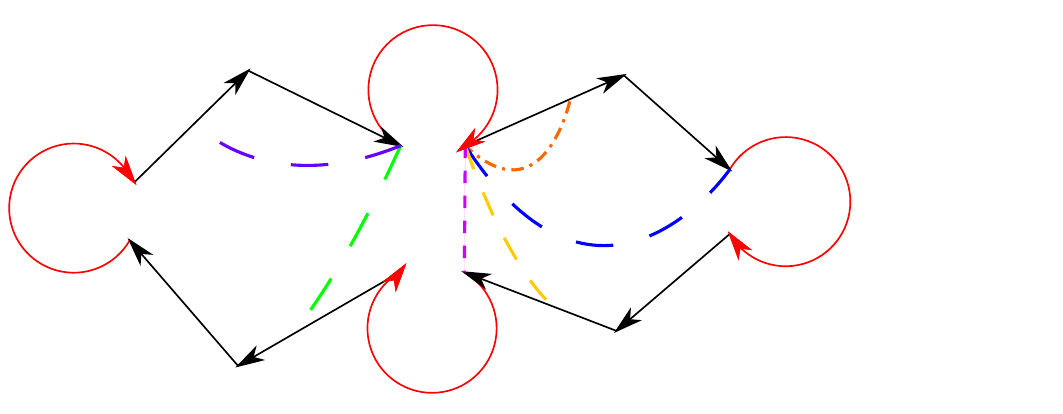}
\end{center}
\caption{Cases 1 through 6 in the proof of Lemma \ref{eliminate}}\label{illustrateeliminate}
\end{figure}

\begin{lemma}\label{prefix 1}
If $p_{n_i}$ is connected to an $H_{\lambda_i}$-component of $p_{t_{i+1}}$, then $t_{i+1}$ can be decomposed as $t_{i+1}\equiv uhv$ with $h\in H_{\lambda_i}\setminus\{1\}$ ($u,v$ are allowed to be empty words), $t_i\equiv u$, and $\widehat{d}_{\lambda_i}(1,n_ih)>12D$.
\end{lemma}

\begin{proof}
By Definition \ref{connect}, $t_{i+1}$ can be decomposed as $t_{i+1}\equiv uhv$ with $h\in H_{\lambda_i}\setminus\{1\}$ such that $p_{n_i}$ is connected to the path $p_h$ in $\Gamma(G,X\sqcup\mathcal{H})$ with $Lab(p_h)\equiv h$ and $p^-_h=t^{-1}_k(\prod_{j=1}^i t_jn_jt^{-1}_j)u$. By (P3), $u\in T_{\lambda_i}$. The assumption that $p_{n_i}$ is connected to $p_h$ also implies $t^{-1}_iu\in H_{\lambda_i}$ and thus $t_i\equiv u$, by (P1). Another consequence of (P3) is $$\widehat{d}_{\lambda_i}(1,h)\leqslant\widehat{d}_{\lambda_i}(1,h(h^{-1}n_ih))=\widehat{d}_{\lambda_i}(1,n_ih).$$
Therefore, the triangle inequality implies $$\widehat{d}_{\lambda_i}(1,n_i)\leqslant \widehat{d}_{\lambda_i}(1,n_ih)+\widehat{d}_{\lambda_i}(1,h^{-1})=\widehat{d}_{\lambda_i}(1,n_ih)+\widehat{d}_{\lambda_i}(1,h)\leqslant 2\widehat{d}_{\lambda_i}(1,n_ih)$$
and thus 
$$\widehat{d}_{\lambda_i}(1,n_ih)\geqslant\widehat{d}_{\lambda_i}(1,n_i)/2>12D,$$
by (24D).
\end{proof}

The next lemma follows from Lemma \ref{prefix 1} by considering $w^{-1}$.

\begin{lemma}\label{prefix 2}
If $p_{n_i}$ is connected to an $H_{\lambda_i}$-component of $p_{t^{-1}_{i-1}}$, then $t_{i-1}$ can be decomposed as $t_{i-1}\equiv uhv$ with $h\in H_{\lambda_i}\setminus\{1\}$ ($u,v$ are allowed to be empty words), $t_i\equiv u$, and $\widehat{d}_{\lambda_i}(1,h^{-1}n_i)>12D$.
\end{lemma}


\begin{lemma}\label{disconnect}
If $p_{n_i}$ is connected to an $H_{\lambda_i}$-component of $p_{t_{i+1}}$, then $p_{n_{i+1}}$ is not connected to any $H_{\lambda_{i+1}}$-component of $p_{t^{-1}_i}$. If $p_{n_i}$ is connected to an $H_{\lambda_i}$-component of $p_{t^{-1}_{i-1}}$, then $p_{n_{i-1}}$ is not connected to any $H_{\lambda_{i-1}}$-component of $p_{t_i}$.
\end{lemma}

\begin{proof}
If $p_{n_i}$ is connected to an $H_{\lambda_i}$-component of $p_{t_{i+1}}$, then $\|t_i\|<\|t_{i+1}\|$ by Lemma \ref{prefix 1}. If, in addition, $p_{n_{i+1}}$ is connected to an $H_{\lambda_{i+1}}$-component of $p_{t^{-1}_i}$, then $\|t_{i+1}\|<\|t_i\|$ by Lemma \ref{prefix 2}, a contradiction.

The second assertion of the Lemma can be proved by considering $w^{-1}$.
\end{proof}

Recall that we assume the existence of a word $z$ satisfying (Z1) through (Z4) and construct $w,p_w$ from $z$. The previous several lemmas reveal some properties of $p_w$ and we are now ready to construct a geodesic polygon $p$ from $p_w$ so that $p$ violates Lemma \ref{length}, and then we can conclude that $z$ does not exist and prove Proposition \ref{free}. The idea is to merge all $H_{\lambda_i}$-components connected to $p_{n_i}$ to form an isolated $H_{\lambda_i}$-component for $i=1,...,k-1$. Of course, one can also merge $p_{n_k}$ with the $H_{\lambda_k}$-components connected to it. We do not perform this merging only because it makes the construction more complicated. Pick elements $h_1,...,h_{k-1}\in \mathcal{H}$ and $g_{1,1},g_{1,2},g_{2,1},g_{2,2},...,g_{k-1,1},g_{k-1,2}\in G$ by the following procedure.
\begin{procedure}\label{pro.}
For $i=1,...,k-1$, perform the following.
\begin{enumerate}
\item[(a)] If $p_{n_i}$ is an isolated $H_{\lambda_i}$-component in $p_w$, let $g_{i,1}\in G$ (resp. $g_{i,2}\in G$) be represented by the word $t^{-1}_k(\prod_{j=1}^{i-1} t_j n_j t_j^{-1})t_i$ (resp. $t^{-1}_k(\prod_{j=1}^{i-1} t_j n_j t_j^{-1})t_in_i$), and let $h_i=n_i$.
\item[(b)] If, in $p_w$, $p_{n_i}$ is connected to an $H_{\lambda_i}$-component of $p_{t_{i+1}}$, but not connected to any $H_{\lambda_i}$-component of $p_{t^{-1}_{i-1}}$, then by Lemma \ref{prefix 1}, $t_{i+1}$ can be decomposed as $t_{i+1}\equiv u_ih'_iv_i$ with $h'_i\in H_{\lambda_i}\setminus\{1\}$, $t_i\equiv u_i$, and $\widehat{d}_{\lambda_i}(1,n_ih'_i)>6D$. Let $h_i$ be a letter from $H_{\lambda_i}$ such that $h_i=_G n_ih'_i$, and let $g_{i,1}\in G$ (resp. $g_{i,2}\in G$) be represented by the word $t^{-1}_k(\prod_{j=1}^{i-1} t_j n_j t_j^{-1})t_i$ (resp. $t^{-1}_k(\prod_{j=1}^{i-1} t_j n_j t_j^{-1})t_ih_i$).
\item[(c)] If in $p_w$, $p_{n_i}$ is connected to an $H_{\lambda_i}$-component of $p_{t^{-1}_{i-1}}$, but not connected to any $H_{\lambda_i}$-component of $t_{i+1}$, then by Lemma \ref{prefix 2}, $t_{i-1}$ can be decomposed as $t_{i-1}\equiv u_ih'_iv_i$ with $h'_i\in H_{\lambda_i}\setminus\{1\}$, $t_i\equiv u_i$, and $\widehat{d}_{\lambda_i}(1,h'^{-1}_in_i)>6D$. Let $h_i$ be a letter from $H_{\lambda_i}$ such that $h_i=_G h'^{-1}_in_i$, and let $g_{i,1}\in G$ (resp. $g_{i,2}\in G$) be represented by the word $t^{-1}_k(\prod_{j=1}^{i-2} t_j n_j t_j^{-1})t_{i-1}n_{i-1}v^{-1}_i$ (resp. $t^{-1}_k(\prod_{j=1}^{i-2} t_j n_j t_j^{-1})t_{i-1}n_{i-1}v^{-1}_ih_i$).
\item[(d)] If in $p_w$, $p_{n_i}$ is connected to both an $H_{\lambda_i}$-component of $p_{t_{i+1}}$ and an $H_{\lambda_i}$-component of $p_{t^{-1}_{i-1}}$, then by Lemmas \ref{prefix 1} and \ref{prefix 2}, $t_{i+1}$ (resp. $t_{i-1}$) can be decomposed as $t_{i+1}\equiv u_ih'_iv_i$ (resp. $t_{i-1}\equiv u'_ih''_iv'_i$) with $h'_i\in H_{\lambda_i}\setminus\{1\}$ (resp. $h''_i\in H_{\lambda_i}\setminus\{1\}$), $t_i\equiv u_i$ (resp. $t_i\equiv u'_i$). Let $h_i$ be a letter from $H_{\lambda_i}$ such that $h_i=_G h''^{-1}_in_ih'_i$, and let $g_{i,1}\in G$ (resp. $g_{i,2}\in G$) be represented by the word $t^{-1}_k(\prod_{j=1}^{i-2} t_j n_j t_j^{-1})t_{i-1}n_{i-1}(v'_i)^{-1}$ (resp. $t^{-1}_k(\prod_{j=1}^{i-2} t_j n_j t_j^{-1})t_{i-1}n_{i-1}(v'_i)^{-1}h_i$).
\end{enumerate}
\end{procedure}

\begin{lemma}\label{order}
$g_{i,1}$ and $g_{i,2}$ are vertices on $p_w$ for $i=1,...,k-1$. Moreover, the order in which $p_w$ visits these vertices is $g_{1,1},g_{1,2},g_{2,1},g_{2,2},...,g_{k-1,1},g_{k-1,2}$.
\end{lemma}

\begin{proof}
The first assertion follows directly from the choices of those vertices. Clearly, the path $p_w$ visits $g_{i,1}$ before visiting $g_{i,2}$ for $i=1,...,k-1$. Thus, the second assertion will be proved once we show that, for all $i,j\in \{1,...,k-1\}$ with $i<j$, the path $p_w$ visits $g_{i,2}$ before visiting $g_{j,1}$.

Suppose, for the contrary, that for some $i,j\in \{1,...,k-1\}$ with $i<j$, the path $p_w$ visits $g_{j,1}$ before visiting $g_{i,2}$. By Lemma \ref{eliminate}, there is only one possibility for this case: $j=i+1$, $p_{n_i}$ is connected to an $H_{\lambda_i}$-component of $p_{t_{i+1}}$, and $p_{n_{i+1}}$ is connected to an $H_{\lambda_{i+1}}$-component of $p_{t^{-1}_i}$. By Lemma \ref{disconnect}, if  $p_{n_i}$ is connected to an $H_{\lambda_i}$-component of $p_{t_{i+1}}$, then $p_{n_{i+1}}$ is not connected to any $H_{\lambda_{i+1}}$-component of $p_{t^{-1}_i}$, a contradiction.
\end{proof}

\begin{lemma}\label{obvious}
For $i=1,...,k-2$, the subpath of $p_w$ from $g_{i,2}$ to $g_{i+1,1}$ consists of at most two geodesic segments.
\end{lemma}

Lemma \ref{obvious} follows immediately from the choices of the vertices $g_{i,1}$ and $g_{i,2}$, $1\leqslant i\leqslant k-1$. We are now ready to construct a geodesic polygon $p$ from $p_w$.

\begin{construction}\label{con.}
For $i=1,...,k-1$, let $p_{h_i}$ the edge of $\Gamma(G,X\sqcup \mathcal{H})$ with $Lab(p_{h_i})=h_i$ and $p^-_{h_i}=g_{i,1}$. Let $p$ be the path in $\Gamma(G,X\sqcup \mathcal{H})$ satisfying: $p^-$ is the identity vertex. $p$ first follows the path of $p_w$ (in the direction of $p_w$) until $p$ visits $g_{1,1}$, and then $p$ travels along $p_{h_1}$ and arrives at $g_{1,2}$. And then $p$ follows the path $p_w$ (in the direction of $p_w$) until $p$ arrives at $g_{2,1}$ (Lemma \ref{order} guarantees that $p$ will arrive at $g_{2,1}$), where $p$ travels along $p_{h_2}$ and then arrives at $g_{2,2}$. The path $p$ continues traveling in this manner until arriving at $g_{k-1,2}$. Finally, $p$ follows the path $p_w$ (in the direction of $p_w$) and comes back to the identity vertex.
\end{construction}

\begin{figure}
{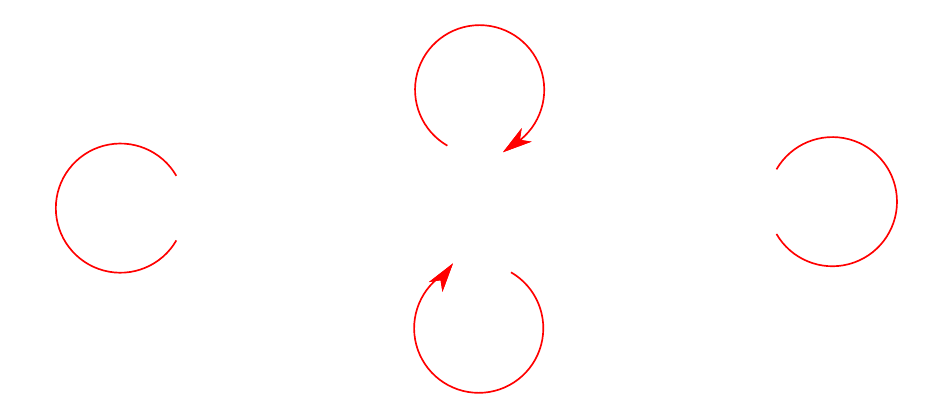}
\caption{The construction of $p$}\label{constructp}
\end{figure}

Figure \ref{constructp} illustrates how to construct the geodesic polygon $p$. In Figure \ref{constructp}, the outside boundary with label $t^{-1}_4t_1n_1t^{-1}_1t_2n_2t^{-1}_2t_3n_3t^{-1}_3t_4n_4$ is the geodesic polygon $p_w$. In the outside boundary, $p_{n_2}$ is an isolated $H_{\lambda_2}$-component, $p_{n_1}$ (resp. $p_{n_4}$) is connected to an $H_{\lambda_1}$-component (resp. $H_{\lambda_4}$-component) of $p_{t_2}$ (resp. $p_{t_1}$), and $p_{n_3}$ is connected to both an $H_{\lambda_3}$-component of $p_{t^{-1}_2}$ and an $H_{\lambda_3}$-component of $p_{t_4}$. By Lemma \ref{prefix 1}, $t^{-1}_1$ cancels with a prefix of $t_2$. After this cancellation, $p_{n_1}$ merges with an $H_{\lambda_1}$-component of $p_{t_2}$ to form $p_{h_1}$. Similarly, $p_{n_3}$ merges with both an $H_{\lambda_3}$-component of $p_{t^{-1}_2}$ and an $H_{\lambda_3}$-component of $p_{t_4}$ to form $p_{h_3}$. The merging process does nothing to $n_4$, although $n_4$ is not an isolated $H_{\lambda_4}$-component. Finally, $p_w$ becomes $p$, the boundary of the shaded region.

\begin{remark}
It follows directly from the above construction that $p_{h_i}$ is an isolated $H_{\lambda_i}$-component of $p$ for $i=1,...,k-1$.
\end{remark}

Note that the subpath of $p_w$ from $1$ to $g_{i,1}$ consists of at most $2$ geodesic segments, and the subpath of $p_w$ from $g_{k-1,2}$ to $1$ consists of at most $3$ geodesic segments. Together with Lemma \ref{obvious}, these observations imply that $p$ is a polygon in $\Gamma(G,X\sqcup \mathcal{H})$ with at most $3k$ geodesic sides.

Consider the following partition of $\{1,...,k-1\}=I_1\sqcup I_2$. A number $1\leqslant i\leqslant k-1$ belongs to $I_1$ if in $p_w$, $p_{n_i}$ is connected to both an $H_{\lambda_i}$-component of $p_{t^{-1}_{i-1}}$ and an $H_{\lambda_i}$-component of $p_{t_{i+1}}$. Otherwise, $i$ belongs to $I_2$.

\begin{lemma}
$card(I_1)\leqslant (k-1)/2$.
\end{lemma}

\begin{proof}
First suppose $card(I_1)>k/2$. Then there exists a number $i$ such that both $i$ and $i+1$ belong to $I_1$, contradicting Lemma \ref{disconnect}. Thus, $card(I_1)\leqslant k/2$.

Suppose $card(I_1)=k/2$. Then $k$ is even and $I_1=\{1,3,...,k-3,k-1\}$. For every even number $i\in\{2,4,...,k-2,k\}$, Lemma \ref{disconnect} implies that $p_{n_i}$ is an isolated $H_{\lambda_i}$-component of $p_w$. Note that $\widehat{\ell}_{\lambda_i}(p_{n_i})=\widehat{d}_{\lambda_i}(1,n_i)>24D$ for $i=1,...,k$, by (24D). Therefore, Lemma \ref{length}, applied to the geodesic $3k$-gon $p_w$, yields
$$\dfrac{24Dk}{2}<\widehat{\ell}_{\lambda_2}(p_{n_2})+\widehat{\ell}_{\lambda_4}(p_{n_4})+\cdot\cdot\cdot+\widehat{\ell}_{\lambda_{k-2}}(p_{n_{k-2}})+\widehat{\ell}_{\lambda_k}(p_{n_k})<3kD,$$
a contradiction.
\end{proof}

Thus, $card(I_2)=k-1-card(I_1)\geqslant (k-1)/2$. For each $i\in I_2$, $p_{h_i}$ is an isolated $H_{\lambda_i}$-component of $p$ with $\widehat{\ell}_{\lambda_i}(p_{h_i})=\widehat{d}_{\lambda_i}(1,h_i)>6D$, by Procedure \ref{pro.} and Construction \ref{con.}. Lemma \ref{length}, applied to the geodesic polygon $p$, yields
\begin{equation}\label{estimate}
6D(k-1)=12D(k-1)/2<\sum_{i\in I_2}\widehat{\ell}_{\lambda_i}(p_{h_i})\leqslant 3kD.
\end{equation}
In other words, $k<2$, contradicting (Z1). Proposition \ref{free} is proved.
\end{proof}

Finally, Theorem \ref{main} follows from Proposition \ref{generate} and Proposition \ref{free}.

\begin{remark}
The proof of Theorem \ref{main} implies that if $\{H_{\lambda}\}_{\lambda\in\Lambda}\hookrightarrow_{wh} (G,X)$ for some $X\subset G$, $N_{\lambda}\lhd H_{\lambda}$ for $\lambda\in\Lambda$, and (24D) holds, then for every collection $\{T_{\lambda}\}_{\lambda\in\Lambda}$ satisfying (P1), (P2), and (P3), we have
$$\ll\mathcal{N}\rr=\prod^{\ast}_{\lambda\in\Lambda,t\in T_{\lambda}}N^t.$$
\end{remark}

\begin{remark}
In fact, one can show that if $\{H_{\lambda}\}_{\lambda\in\Lambda}\hookrightarrow_{wh} (G,X)$ for some $X\subset G$, $N_{\lambda}\lhd H_{\lambda}$ for $\lambda\in\Lambda$, and following condition
\begin{enumerate}
    \item[(4D)] $\widehat{d}_{\lambda}(1,n)>4D$ for all $n\in N_{\lambda}\setminus\{1\}$ and $\lambda\in\Lambda$
\end{enumerate}
holds, then the triple $(G,\{H_{\lambda}\}_{\lambda\in\Lambda},\{N_{\lambda}\}_{\lambda\in\Lambda})$ possesses the Cohen-Lyndon property. For the proof, one needs to merge $p_{n_k}$ with the $H_{\lambda_k}$-components connected to it in the construction of $p$, and sharpen the coarse estimate \eqref{estimate}.
\end{remark}

\section{Relative relation modules}\label{sec.rm}
Let $H$ be a group with a normal subgroup $N$ and $R=H/N$. The \textit{relative relation module} $Rel(H,N)$ \textit{of the exact sequence} $$1\rightarrow N\rightarrow H\rightarrow R\rightarrow 1$$
is the abelianization $\overline{N}=N/[N,N]$ equipped with the $R$-action by conjugation. More precisely, denote by $\overline{n}$ the image of an element $n\in N$ under the quotient map $N\rightarrow \overline{N}$. Then there is an action of $H$ on $\overline{N}$ given by $h\circ \overline{n}=\overline{hnh^{-1}}$ for all $h\in H,\overline{n}\in \overline{N}$. Notice that if $h$ belongs to $N$, then $h\circ \overline{n}=\overline{hnh^{-1}}=\overline{h}\overline{n}\overline{h}^{-1}=\overline{n}$ for all $\overline{n}\in \overline{N}$, as $\overline{h}$ commutes with $\overline{n}$. Hence, the action of $H$ gives rises to an action of $R$, turning $\overline{N}$ into a $\mathbb{Z}R$-module. If $H$ is a free group, then $Rel(H,N)$ is called a \textit{relation module}.

The main goal of this section is to prove Proposition \ref{genmodule}, which, together with Theorem \ref{simmain}, implies Corollary \ref{module}.

\begin{proposition}\label{genmodule}
Suppose that $G$ is a group, that $\{H_{\lambda}\}_{\lambda\in\Lambda}$ is a family of subgroups of $G$, and that $N_{\lambda}$ is a normal subgroup of $H_{\lambda}$ for every $\lambda\in\Lambda$. Let $\mathcal{N}=\bigcup_{\lambda\in\Lambda}N_{\lambda}$, $Q=G/\ll \mathcal{N}\rr$, and $R_{\lambda}=H_{\lambda}/N_{\lambda}$ for $\lambda\in\Lambda$. If $N_{\lambda}\neq\{1\}$ for every $\lambda\in\Lambda$ and the triple $(G,\{H_{\lambda}\}_{\lambda\in\Lambda},\{N_{\lambda}\}_{\lambda\in\Lambda})$ has the Cohen-Lyndon property, then 
\begin{enumerate}
\item[(a)] for every $\lambda\in\Lambda$, the natural map $R_{\lambda}\rightarrow Q$ is injective (i.e., $H_{\lambda}\cap \ll \mathcal{N} \rr = N_{\lambda}$), identifying $R_{\lambda}$ with a subgroup of $Q$;
\item[(b)] $Rel(G,\ll \mathcal{N} \rr)\cong\bigoplus_{\lambda\in\Lambda}Ind_{R_{\lambda}}^Q Rel(H_{\lambda},N_{\lambda})$ as $\mathbb{Z}Q$-modules.
\end{enumerate}
\end{proposition}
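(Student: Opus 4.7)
The plan is to deduce (a) from the malnormality of nontrivial free factors in free products, and to prove (b) by combining the universal property of induced modules with the fact that the abelianization of a free product is the direct sum of the abelianizations of the factors.

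For (a), I would let $\{T_{\lambda}\}_{\lambda\in\Lambda}$ be the transversals witnessing the Cohen-Lyndon property, chosen so that $1\in T_{\lambda}$ for every $\lambda$; this is permissible because $1$ lies in the coset $H_{\lambda}\ll\mathcal{N}\rr$. Then $N_{\lambda}=N^{1}_{\lambda}$ appears as a genuine free factor of $\ll\mathcal{N}\rr=\prod^{\ast}_{\lambda,\,t\in T_{\lambda}}N^{t}_{\lambda}$. For any $w\in H_{\lambda}\cap\ll\mathcal{N}\rr$, normality of $N_{\lambda}$ in $H_{\lambda}$ gives $wN_{\lambda}w^{-1}=N_{\lambda}$. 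A short argument via the free product normal form shows that a nontrivial free factor of a free product is malnormal; combined with the hypothesis $N_{\lambda}\neq\{1\}$, this forces $w\in N_{\lambda}$, establishing $H_{\lambda}\cap\ll\mathcal{N}\rr=N_{\lambda}$ and hence the injection $R_{\lambda}\hookrightarrow Q$.

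For (b), the free product decomposition yields an abelian group isomorphism $Rel(G,\ll\mathcal{N}\rr)=\ll\mathcal{N}\rr/[\ll\mathcal{N}\rr,\ll\mathcal{N}\rr]\cong\bigoplus_{\lambda,\,t\in T_{\lambda}}\overline{N^{t}_{\lambda}}$. For each $\lambda$, I would take the inclusion $N_{\lambda}\hookrightarrow\ll\mathcal{N}\rr$, pass to abelianizations, and obtain a $\mathbb{Z}R_{\lambda}$-linear map $\alpha_{\lambda}\colon\overline{N_{\lambda}}\to Rel(G,\ll\mathcal{N}\rr)$, where the $R_{\lambda}$-action on the target is by the embedding $R_{\lambda}\hookrightarrow Q$ provided by (a). The universal property of induction then extends $\alpha_{\lambda}$ to a $\mathbb{Z}Q$-linear map $\tilde{\alpha}_{\lambda}\colon Ind^{Q}_{R_{\lambda}}Rel(H_{\lambda},N_{\lambda})\to Rel(G,\ll\mathcal{N}\rr)$; summing over $\lambda$ gives the candidate $\phi$, whose action on simple tensors is $\bar{t}\otimes\overline{n}\mapsto\overline{tnt^{-1}}$. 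Because $T_{\lambda}$ simultaneously serves as a transversal of $R_{\lambda}$ in $Q$ (again by (a)), one has $\bigoplus_{\lambda}Ind^{Q}_{R_{\lambda}}\overline{N_{\lambda}}\cong\bigoplus_{\lambda}\bigoplus_{t\in T_{\lambda}}\bar{t}\otimes\overline{N_{\lambda}}$, and on each summand $\phi$ restricts to the isomorphism $\bar{t}\otimes\overline{N_{\lambda}}\to\overline{N^{t}_{\lambda}}$ induced by conjugation by $t$; so $\phi$ is an isomorphism of abelian groups.

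The main obstacle I anticipate is verifying $\mathbb{Z}Q$-equivariance of $\phi$. Given $q=\bar{g}\in Q$ and a simple tensor $\bar{t}\otimes\overline{n}$, one writes $gt=t'hm$ with $t'\in T_{\lambda}$, $h\in H_{\lambda}$ and $m\in\ll\mathcal{N}\rr$, and must check that $\overline{t'hmnm^{-1}h^{-1}(t')^{-1}}=\overline{t'hnh^{-1}(t')^{-1}}$ in $Rel(G,\ll\mathcal{N}\rr)$. This reduces to $(t'h)[m,n](t'h)^{-1}\in[\ll\mathcal{N}\rr,\ll\mathcal{N}\rr]$, which holds because $[\ll\mathcal{N}\rr,\ll\mathcal{N}\rr]$ is characteristic in $\ll\mathcal{N}\rr$ and $\ll\mathcal{N}\rr\lhd G$. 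Once this routine bookkeeping is dispatched, the proof is conceptually straightforward.
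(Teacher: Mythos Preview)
Your proof is correct and follows essentially the same route as the paper: part~(a) via the self-normalizing property of a nontrivial free factor, and part~(b) via the identification of the abelianization of a free product with the direct sum of abelianizations together with the standard description of induced modules. The paper packages~(b) slightly differently---it shows the $Q$-action transitively permutes the summands $\overline{N_\lambda^t}$ with isotropy $R_\lambda$ at $\overline{N_\lambda}$ and then invokes the ``permutation module'' characterization of induction (Brown, III.5.3)---but your direct appeal to Frobenius reciprocity amounts to the same computation (your $gt=t'hm$ check is exactly the paper's verification that the action permutes summands), and your anticipated obstacle is a non-issue: once $\tilde\alpha_\lambda$ is obtained from the universal property it is $\mathbb{Z}Q$-linear by construction.
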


\begin{remark}
If $N_{\lambda_0}=\{1\}$ for some $\lambda_0\in\Lambda$, then we can consider the subset $\Lambda'$ such that $N_{\lambda}\neq \{1\}$ for every $\lambda\in\Lambda'$. Proposition \ref{genmodule} can then be applied to $(G,\{H_{\lambda}\}_{\lambda\in\Lambda'},\{N_{\lambda}\}_{\lambda\in\Lambda'})$.
\end{remark}


Suppose that the assumptions of Proposition \ref{genmodule} are satisfied. Let $T_{\lambda},\lambda\in\Lambda$, be the transversals provided by Definition \ref{CLp}. Fix some $\lambda\in\Lambda$ for the moment. Suppose $h\in H_{\lambda}\cap \ll\mathcal{N}\rr$. Then $h\in N_{\ll\mathcal{N}\rr}(N_{\lambda})$, the normalizer of $N_{\lambda}$ in $\ll\mathcal{N}\rr$. Note that
$$\ll\mathcal{N}\rr=\prod^{\ast}_{\mu\in\Lambda,t\in T_{\mu}}N^t_{\mu}=N_{\lambda}\ast (\prod^{\ast}_{t\in T_{\lambda}\setminus\{1\}}N^t_{\lambda}\ast \prod^{\ast}_{\mu\in\Lambda\setminus\{\lambda\},t\in T_{\mu}}N^t_{\mu})$$
and $N_{\lambda}\neq\{1\}$. Note also the following general fact.

\begin{lemma}
Let $A,B\neq\{1\}$ be groups. Then $N_{A\ast B}(A)=A$.
\end{lemma}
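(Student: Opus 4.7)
The plan is to use the normal-form theorem for free products: every element $g\in A\ast B$ has a unique reduced expression $g=x_1x_2\cdots x_n$ with each $x_i$ lying in $A\setminus\{1\}$ or $B\setminus\{1\}$ and consecutive letters drawn from different factors. The inclusion $A\subseteq N_{A\ast B}(A)$ is clear, so I only need the reverse inclusion. I would argue by contrapositive: if $g\notin A$, then I exhibit some $a\in A\setminus\{1\}$ with $gag^{-1}\notin A$.

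I would split into two cases based on the last letter of the reduced form of $g$. Fix any $a\in A\setminus\{1\}$, which exists because $A\neq\{1\}$. In the first case, $x_n\in B\setminus\{1\}$. Then the concatenation
\[
gag^{-1}=x_1\cdots x_n\,a\,x_n^{-1}\cdots x_1^{-1}
\]
is already a reduced word, since $a\in A$ sits between two letters of $B$, and no letter is trivial. Its reduced length is $2n+1\geq 3$, so it cannot belong to $A$ (elements of $A$ have reduced length $0$ or $1$). In the second case, $x_n\in A$; since $g\notin A$ we must have $n\geq 2$, and then $x_{n-1}\in B$. Because conjugation by $x_n$ is an automorphism of $A$, the element $a':=x_nax_n^{-1}$ lies in $A\setminus\{1\}$, so
\[
gag^{-1}=x_1\cdots x_{n-1}\,a'\,x_{n-1}^{-1}\cdots x_1^{-1}
\]
is reduced of length $2n-1\geq 3$, hence again not in $A$.

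In both cases $g\notin N_{A\ast B}(A)$, which is the desired contradiction, proving $N_{A\ast B}(A)\subseteq A$. I do not anticipate a real obstacle here; the entire argument is a mechanical application of the uniqueness of normal form in a free product. The only small point to keep track of is that $B\neq\{1\}$ is used implicitly to ensure that once $g\notin A$ the letter $x_{n-1}$ in Case 2 (or $x_n$ in Case 1) really does come from a nontrivial factor, so that no accidental cancellation can collapse the reduced word.
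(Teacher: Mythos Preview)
Your proof is correct but follows a different route from the paper's. The paper argues via the Bass--Serre tree for $A\ast B$: if $g\notin A$ and $a\in A\setminus\{1\}$, then $gag^{-1}$ fixes both the vertex $v$ stabilised by $A$ and the distinct vertex $g\cdot v$, hence fixes an edge; since edge stabilisers are trivial, $gag^{-1}=1$, contradicting $a\neq 1$. Your argument instead uses the normal-form theorem directly and is more elementary and self-contained, requiring no geometric machinery. The Bass--Serre argument, while invoking more background, is conceptually cleaner and generalises immediately to amalgamated products and arbitrary graphs of groups (where edge groups need not be trivial), whereas the normal-form computation is tailored to free products.

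One small remark: your closing comment about the role of $B\neq\{1\}$ is slightly off. The letters $x_i$ in a reduced word are nontrivial by definition, so there is no risk of ``accidental cancellation''. The hypothesis $B\neq\{1\}$ merely guarantees that elements $g\notin A$ exist at all; if $B=\{1\}$ the free product collapses to $A$ and there is nothing to prove. It is $A\neq\{1\}$ that does real work, letting you choose $a\in A\setminus\{1\}$.
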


\begin{proof}
Suppose that there exists $a\in A\setminus\{1\}$ and $g\in A\ast B\setminus A$ such that $a^g\in A$. Consider the Bass-Serre tree $Tr$ corresponding to $A\ast B$. The vertex group $A$ fixes a vertex $v$ of $Tr$ and thus $a^g$ fixes $v$. Clearly, the vertex $g\circ v$ is also fixed by $a^g$. As $g\in A\ast B\setminus A$, $g\circ v\neq v$ and thus $a^g$ fixes a non-trivial path between $v$ and $g\circ v$. In particular, $a^g$ fixes an edge of $Tr$ and thus conjugates into the unique edge subgroup $\{1\}$ of $A\ast B$. It follows that $a^g=1$, which is in contradiction with $a\neq 1$.
\end{proof}

Therefore, $N_{\ll\mathcal{N}\rr}(N_{\lambda})=N_{\lambda}$ and $h\in N_{\lambda}$. We conclude:

\begin{lemma}
For every $\lambda\in\Lambda$, $H_{\lambda}\cap \ll \mathcal{N} \rr = N_{\lambda}$.
\end{lemma}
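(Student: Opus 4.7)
The lemma is essentially a one-line consequence of the normalizer computation carried out in the paragraphs immediately preceding its statement, so my plan is to package that calculation into a clean argument. The inclusion $N_\lambda \subseteq H_\lambda \cap \ll\mathcal{N}\rr$ is automatic, since $N_\lambda \subseteq H_\lambda$ and $N_\lambda \subseteq \ll\mathcal{N}\rr$ by definition. For the reverse inclusion, I would take an arbitrary $h \in H_\lambda \cap \ll\mathcal{N}\rr$ and observe that because $N_\lambda$ is normal in $H_\lambda$, the element $h$ normalizes $N_\lambda$; since $h$ also lies in $\ll\mathcal{N}\rr$, we get $h \in N_{\ll\mathcal{N}\rr}(N_\lambda)$. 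The problem thus reduces to identifying this normalizer with $N_\lambda$ itself.

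To identify it, I would invoke the Cohen-Lyndon property: the transversals $T_\mu$ provided by Definition \ref{CLp} give $\ll\mathcal{N}\rr = \prod^{\ast}_{\mu,\,t\in T_\mu} N_\mu^t$. Arranging $1 \in T_\lambda$ (as the representative of the coset $H_\lambda\ll\mathcal{N}\rr$, which contains the identity) singles out $N_\lambda^1 = N_\lambda$ as one of the free factors; collecting all other factors into a single subgroup $K$, the decomposition becomes the binary free product $\ll\mathcal{N}\rr = N_\lambda \ast K$. Since $N_\lambda \neq \{1\}$ by hypothesis, the preceding general normalizer lemma (proved via a Bass-Serre tree argument) applies as long as $K \neq \{1\}$, and yields $N_{\ll\mathcal{N}\rr}(N_\lambda) = N_\lambda$. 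The degenerate case $K = \{1\}$ is immediate, since then $\ll\mathcal{N}\rr = N_\lambda \subseteq H_\lambda$ and the intersection is trivially $N_\lambda$. In either case $h \in N_\lambda$, completing the proof.

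The main, and really only, obstacle is the minor bookkeeping needed to justify that $1$ may be taken as the element of $T_\lambda$ representing $H_\lambda\ll\mathcal{N}\rr$ without disturbing the free product decomposition. If the distinguished element is instead some $t_0 = h_0 n_0$ with $h_0 \in H_\lambda$ and $n_0 \in \ll\mathcal{N}\rr$, one can either conjugate the whole Cohen-Lyndon decomposition by a suitable element of $\ll\mathcal{N}\rr$ (legitimate because $\ll\mathcal{N}\rr$ is normal in $G$) to relocate $N_\lambda$ into the role of the distinguished free factor, or apply the normalizer lemma directly to the conjugate $N_\lambda^{t_0}$ and transport the conclusion via that conjugation. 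Either workaround is routine, after which the argument above goes through verbatim.
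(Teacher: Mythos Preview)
Your proposal is correct and follows the same route as the paper: reduce to computing $N_{\ll\mathcal{N}\rr}(N_\lambda)$ via the Cohen--Lyndon free product decomposition and the normalizer-in-a-free-product lemma. You are in fact slightly more careful than the paper, which tacitly writes $T_\lambda\setminus\{1\}$ without justifying $1\in T_\lambda$ and does not single out the degenerate case where the complementary free factor is trivial; your remarks on both points are accurate and your suggested workaround (transport via conjugation by $t_0$, using normality of $\ll\mathcal{N}\rr$ in $G$) is valid.
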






Let us consider the relative relation modules $Rel(G,\ll\mathcal{N}\rr)$ and $Rel(H_{\lambda},N_{\lambda}),\lambda\in\Lambda$. For every $\lambda\in\Lambda$, let $M_{\lambda}$ be the subgroup of $G$ generated by $N^t_{\lambda},t\in T_{\lambda}$. Note that $M_{\lambda}=\prod^{\ast}_{t\in T_{\lambda}}N^t_{\lambda}$ for every $\lambda\in\Lambda$, as $\ll\mathcal{N}\rr=\prod^{\ast}_{\lambda\in\Lambda,t\in T_{\lambda}}N^t_{\lambda}$. Note also that $\ll\mathcal{N}\rr=\prod^{\ast}_{\lambda\in\Lambda}M_{\lambda}$. 

For every $\lambda\in\Lambda$, the composition of natural maps $M_{\lambda}\hookrightarrow \ll\mathcal{N}\rr \rightarrow \overline{\ll\mathcal{N}\rr}$ maps $M_{\lambda}$ into the abelian group $\overline{\ll\mathcal{N}\rr}$ and thus factors through
$$i_{\lambda}:\overline{M_{\lambda}}\rightarrow \overline{\ll\mathcal{N}\rr}.$$
The homomorphisms $i_{\lambda},\lambda\in\Lambda,$ extend to an abelian group homomorphism
$$i:\bigoplus_{\lambda\in\Lambda}\overline{M_{\lambda}}\rightarrow \overline{\ll\mathcal{N}\rr}.$$
It is well-known that $i$ is an abelian group isomorphism (for example, see \cite[Problem 4 of Exercise 6.2]{robinson2012course}). Thus, we identify $\overline{M_{\lambda}}$ with its image $i_{\lambda}(\overline{M_{\lambda}})$ for every $\lambda\in\Lambda$ and write
$$Rel(G,\ll\mathcal{N}\rr)=\overline{\ll\mathcal{N}\rr}=\bigoplus_{\lambda\in\Lambda}\overline{M_{\lambda}}.$$

Fix $\lambda\in\Lambda$ for the moment. By the same argument as the one above, we write
$$\overline{M_{\lambda}}=\bigoplus_{t\in T_{\lambda}}\overline{N^t_{\lambda}}.$$

\begin{lemma}\label{permute factors}
$\overline{M_{\lambda}}$ is a $\mathbb{Z}Q$-submodule of $Rel(G,\ll\mathcal{N}\rr)=\bigoplus_{\lambda\in\Lambda}\overline{M_{\lambda}}$. The $Q$-action on $\overline{M_{\lambda}}$ transitively permutes the summands $\overline{N^t_{\lambda}},t\in T_{\lambda},$ and its isotropy group of $\overline{N_{\lambda}}$ is $R_{\lambda}$, i.e., an element $q\in Q$ satisfies $q\circ \overline{n}\in \overline{N_{\lambda}}$ for all $\overline{n}\in \overline{N_{\lambda}}$ if and only if $q\in R_{\lambda}$.
\end{lemma}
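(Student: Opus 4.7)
The plan is to verify the three assertions in sequence, with the main work concentrated in the first.

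For $Q$-invariance of $\overline{M_\lambda}$, I would fix $q\in Q$, lift it to $g\in G$, and act on a typical generator $\overline{tnt^{-1}}$ of $\overline{N^t_\lambda}$ with $t\in T_\lambda$, $n\in N_\lambda$. Since $T_\lambda$ is a left transversal of $H_\lambda\ll\mathcal{N}\rr$ in $G$, there is a unique decomposition $gt = t'h\nu$ with $t'\in T_\lambda$, $h\in H_\lambda$, and $\nu\in\ll\mathcal{N}\rr$. Because conjugation by $\ll\mathcal{N}\rr$ acts trivially on its own abelianization, $\overline{\nu n\nu^{-1}}=\overline{n}$, and combining this with the normality of $N_\lambda$ in $H_\lambda$ yields
\begin{equation*}
q\circ\overline{tnt^{-1}} \;=\; \overline{(gt)n(gt)^{-1}} \;=\; \overline{t'h\,\nu n\nu^{-1}\,h^{-1}t'^{-1}} \;=\; \overline{t'(hnh^{-1})t'^{-1}} \;\in\; \overline{N^{t'}_\lambda}\;\subseteq\;\overline{M_\lambda}.
\end{equation*}
This formula proves $Q$-invariance and simultaneously shows that the $Q$-action permutes the summands $\overline{N^t_\lambda}$ according to the permutation of cosets in $G/H_\lambda\ll\mathcal{N}\rr$ induced by left multiplication. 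Transitivity is then immediate: for $t_1,t_2\in T_\lambda$, the element $q=[t_2 t_1^{-1}]\in Q$ carries $\overline{t_1 n t_1^{-1}}$ to $\overline{t_2 n t_2^{-1}}$.

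For the isotropy assertion, I will assume without loss of generality that $1\in T_\lambda$, so $\overline{N^1_\lambda}=\overline{N_\lambda}$. If $q\in R_\lambda$ is represented by $h\in H_\lambda$, then $h\circ\overline{n}=\overline{hnh^{-1}}\in\overline{N_\lambda}$ since $N_\lambda\lhd H_\lambda$, giving $R_\lambda\subseteq\{q:q\circ\overline{N_\lambda}\subseteq\overline{N_\lambda}\}$. Conversely, suppose $q=[g]$ preserves $\overline{N_\lambda}$. The formula above with $t=1$ places $q\circ\overline{N_\lambda}$ inside $\overline{N^{t'}_\lambda}$, where $t'\in T_\lambda$ is the unique element with $g\in t'H_\lambda\ll\mathcal{N}\rr$; since $\overline{n}\mapsto\overline{t'(hnh^{-1})t'^{-1}}$ is an isomorphism $\overline{N_\lambda}\to\overline{N^{t'}_\lambda}$ of abelian groups, the direct sum decomposition $\overline{M_\lambda}=\bigoplus_{t\in T_\lambda}\overline{N^t_\lambda}$ forces $t'=1$. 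Hence $g\in H_\lambda\ll\mathcal{N}\rr$ and $q\in R_\lambda$.

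The chief subtlety lies in the first step: one must recognize that the ``error term'' $\nu\in\ll\mathcal{N}\rr$ disappears in $\overline{\ll\mathcal{N}\rr}$, so that the action of $q$ on a summand is pinned down by the $H_\lambda\ll\mathcal{N}\rr$-coset of $gt$ alone. Once this is in place, transitivity and isotropy reduce to unwinding the direct sum decomposition and using the uniqueness of the transversal representative.
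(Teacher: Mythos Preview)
Your proposal is correct and takes essentially the same route as the paper: write $gt=t'h\nu$ using that $T_\lambda$ is a left transversal of $H_\lambda\ll\mathcal{N}\rr$, use that $\ll\mathcal{N}\rr$ acts trivially on its own abelianization to obtain $g\circ\overline{tnt^{-1}}=\overline{t'(hnh^{-1})t'^{-1}}\in\overline{N^{t'}_\lambda}$, and then read off invariance, transitivity, and the isotropy group from this formula together with the direct-sum decomposition. The only cosmetic differences are that the paper verifies $g\circ\overline{N^{t_0}_\lambda}=\overline{N^{t_1}_\lambda}$ (not merely $\subseteq$) by explicitly exhibiting a preimage, while you invoke the fact that conjugation by $t'h$ induces an isomorphism $\overline{N_\lambda}\to\overline{N^{t'}_\lambda}$; and your phrase ``unique decomposition $gt=t'h\nu$'' is slightly imprecise (only $t'$ is unique, since $H_\lambda\cap\ll\mathcal{N}\rr=N_\lambda$), though this does not affect the argument.
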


\begin{proof}
Fix $t_0\in T_{\lambda}$ and $g\in G$. There exists $t_1\in T_{\lambda},h\in H_{\lambda},$ and $m\in \ll\mathcal{N} \rr$ such that
\begin{equation}\label{find s}
gt_0=t_1hm.
\end{equation}

Consider the summand $\overline{N^{t_0}_{\lambda}}$. For all $n\in N_{\lambda}$,
$$g\circ\overline{t_0nt^{-1}_0}=\overline{gt_0nt^{-1}_0g^{-1}}=\overline{t_1hmnm^{-1}h^{-1}t_1^{-1}}=\overline{t_1hnh^{-1}t_1^{-1}}\in \overline{N^{t_1}_{\lambda}},$$
where the fact that the action of $\ll\mathcal{N}\rr$ acts trivially on $Rel(G,\ll N \rr)$ is used in the second equality. Hence, $g\circ (\overline{N^{t_0}_{\lambda}})\subset \overline{N^{t_1}_{\lambda}}$. As $\overline{M_{\lambda}}=\bigoplus_{t\in T_{\lambda}}\overline{N^t_{\lambda}}$, it follows that $\overline{M_{\lambda}}$ is $G$-invariant and thus $\overline{M_{\lambda}}$ is also $Q$-invariant.

The above paragraph shows that $g$ maps $\overline{N^{t_0}_{\lambda}}$ into $\overline{N^{t_1}_{\lambda}}$. Actually, $g(\overline{N^{t_0}_{\lambda}})=\overline{N^{t_1}_{\lambda}}$. Indeed, given $n\in N_{\lambda}$, we find an element $x$ of $N^{t_0}_{\lambda}$ such that $g\circ \overline{x}=\overline{n^{t_1}}$. Let $x=n^{t_0h^{-1}}$. Note that $n^{h^{-1}}\in N_{\lambda}$, as $N_{\lambda}$ is normal in $H_{\lambda}$. Thus, $x\in N^{t_0}_{\lambda}$. Direct computation shows
$$g\circ\overline{x}=\overline{gxg^{-1}}=\overline{gt_0(h^{-1}nh)t^{-1}_0g^{-1}}=\overline{t_1hm(h^{-1}nh)m^{-1}h^{-1}t^{-1}_1}=\overline{t_1h(h^{-1}nh)h^{-1}t^{-1}_1}=\overline{n^{t_1}},$$
where the fact that the action of $\ll\mathcal{N}\rr$ on $Rel(G,\ll N \rr)$ is trivial is used in the second equality. Hence, $g\circ \overline{x}=\overline{n^{t_1}}$.

As a consequence, $g\circ (\overline{N^{t_0}_{\lambda}})=\overline{N^{t_1}_{\lambda}}$, i.e., the action of $G$ on $\overline{M_{\lambda}}$ permutes the summands $\overline{N^t_{\lambda}},t\in T_{\lambda}$. In fact, this permutation is transitive: Let $t$ be any element of $T_{\lambda}$. We wish to find an element of $G$ which maps $\overline{N^{t_0}_{\lambda}}$ to $\overline{N^t_{\lambda}}$. This can be done by $tt^{-1}_0$:
$$tt^{-1}_0\circ \overline{N^{t_0}_{\lambda}}=\overline{N^t_{\lambda}}.$$

Thus, the action of $G$ on $\overline{M_{\lambda}}$ transitively permutes the summands $\overline{N^t_{\lambda}},t\in T_{\lambda}$. The same is thus true for the action of $Q$ on $\overline{M_{\lambda}}$.

Clearly, for the action of $G$ on $\overline{M_{\lambda}}$, the isotropy group of $\overline{N_{\lambda}}$ contains $H_{\lambda}\ll \mathcal{N} \rr$. Observe that in equation \eqref{find s}, if $t_0=1$ and $g\not\in H_{\lambda}\ll \mathcal{N} \rr$, then $t_1\neq 1$ as $t^{-1}_1g\in H_{\lambda}\ll \mathcal{N} \rr$. It follows that
$$g\circ \overline{N_{\lambda}}=\overline{N^{t_1}_{\lambda}}\neq \overline{N_{\lambda}},$$
i.e., $g$ does not fix $\overline{N_{\lambda}}$ setwise. Therefore, for the action of $G$ on $\overline{M_{\lambda}}$, the isotropy group of $\overline{N_{\lambda}}$ is $H_{\lambda}\ll \mathcal{N} \rr$. As a consequence, for the action of $Q$ on $\overline{M_{\lambda}}$, the isotropy group of $\overline{N_{\lambda}}$ is $R_{\lambda}$.
\end{proof}

Recall that if $\mathcal{O}$ is a ring, $\mathcal{D}$ is a subring of $\mathcal{O}$, and $A$ is a $\mathcal{D}$-module, the \textit{induced module of} $A$ \textit{from} $\mathcal{D}$ \textit{to} $\mathcal{O}$, denoted as $Ind_{\mathcal{D}}^{\mathcal{O}}A$, is the tensor product $\mathcal{O}\bigotimes_{\mathcal{D}}A$. If $\mathcal{O},\mathcal{D}$ are integral group rings, we simplify notations by dropping $\mathbb{Z}$, e.g., we write $Ind^G_H$ instead of $Ind^{\mathbb{Z}G}_{\mathbb{Z}H}$. For $\lambda\in\Lambda$, Lemma \ref{permute factors}, together with the following Proposition \ref{induce}, which is a well-known characterization of induced modules (for example, see \cite[Proposition 5.3 of Chapter III]{brown1982cohomology}), implies that $\overline{M_{\lambda}}\cong Ind_{R_{\lambda}}^QRel(H_{\lambda},N_{\lambda})$ as $\mathbb{Z}Q$-modules.

\begin{proposition}\label{induce}
Let $G$ be a group and $A$ be a $\mathbb{Z}G$-module. Suppose that the underlying abelian group of $A$ is a direct sum $\bigoplus_{i\in I}A_i$ and that the $G$-action transitively permutes the summands. If $H\leqslant G$ is the isotropy group of $A_j$ for some $j\in I$. Then $A_j$ is a $\mathbb{Z}H$-module and $A\cong Ind^G_H A_j$ as $\mathbb{Z}G$-modules.
\end{proposition}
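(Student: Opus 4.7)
The plan is to exhibit an explicit isomorphism $\phi : \mathrm{Ind}^G_H A_j \to A$ sending $g \otimes a$ to $g \cdot a$, and to verify that it is well-defined, $G$-equivariant, and bijective using the transitivity of the $G$-action on the summands.

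First I would check that $A_j$ carries a natural $\mathbb{Z}H$-module structure: by hypothesis $H$ stabilizes $A_j$ setwise, and since $H$ acts on $A$ by abelian group automorphisms, the restricted action on $A_j$ is an action by abelian group automorphisms, making $A_j$ a $\mathbb{Z}H$-module. Next, using the transitivity of the $G$-action on $\{A_i\}_{i\in I}$ and the fact that $H$ is exactly the isotropy subgroup of $A_j$, the map sending a coset $gH$ to $g\cdot A_j$ is a well-defined bijection between $G/H$ and $\{A_i\}_{i\in I}$. Hence I can pick a left transversal $\{t_i\}_{i\in I} \subset G$ of $H$ in $G$ indexed so that $t_i \cdot A_j = A_i$ for every $i\in I$ (taking $t_j = 1$).

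Now I would define $\phi : \mathbb{Z}G \otimes_{\mathbb{Z}H} A_j \to A$ by $\phi(g\otimes a) = g\cdot a$. To see this is well-defined it suffices to observe that for $h\in H$, $g\in G$, and $a\in A_j$, we have $\phi(gh\otimes a) = gh\cdot a = g\cdot(h\cdot a) = \phi(g\otimes h\cdot a)$, so the map balances over $\mathbb{Z}H$; $G$-equivariance is immediate from the definition. For injectivity and surjectivity, I would exploit the $\mathbb{Z}$-module decompositions on both sides. On the target, every $a\in A$ is a finite sum $a = \sum_i a_i$ with $a_i \in A_i = t_i \cdot A_j$, so $a_i = t_i\cdot b_i$ for a unique $b_i \in A_j$. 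On the source, the transversal $\{t_i\}$ gives the $\mathbb{Z}$-module decomposition $\mathrm{Ind}^G_H A_j = \bigoplus_{i\in I} (t_i\otimes A_j)$, with $t_i\otimes A_j \cong A_j$. The map $\phi$ sends the $i$-th summand $t_i\otimes A_j$ isomorphically onto $t_i \cdot A_j = A_i$ (its inverse on each summand is $t_i\cdot b \mapsto t_i\otimes b$, which is well-defined because the action of $t_i$ restricted to $A_j$ is an abelian group isomorphism $A_j\to A_i$). Thus $\phi$ restricts to an isomorphism summand-by-summand, so it is an isomorphism.

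The main obstacle — really the only subtle point — is the uniqueness of the expression $a_i = t_i \cdot b_i$, which requires checking that the map $A_j \to A_i$, $b \mapsto t_i\cdot b$, is injective. This follows because if $t_i\cdot b = 0$ then $b = t_i^{-1}(t_i\cdot b) = 0$, using that $t_i$ acts on $A$ by an abelian group automorphism. Everything else is a routine verification using the tensor product universal property and the direct sum decompositions. Alternatively, one can invoke \cite[Proposition 5.3 of Chapter III]{brown1982cohomology} directly, as the authors do in the excerpt.
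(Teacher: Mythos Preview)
Your proof is correct and is precisely the standard argument for this characterization of induced modules. The paper does not supply its own proof of this proposition; it simply cites \cite[Proposition 5.3 of Chapter III]{brown1982cohomology}, as you yourself note at the end of your proposal.
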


\begin{proof}[Proof of Proposition \ref{genmodule}]
For every $\lambda\in\Lambda$, $\overline{M_{\lambda}}\cong Ind_{R_{\lambda}}^QRel(H_{\lambda},N_{\lambda})$ as $\mathbb{Z}Q$-modules. Thus, $$Rel(G,\ll \mathcal{N} \rr)=\bigoplus_{\lambda\in\Lambda}\overline{M_{\lambda}}\cong \bigoplus_{\lambda\in\Lambda}Ind_{R_{\lambda}}^QRel(H_{\lambda},N_{\lambda})$$ as $\mathbb{Z}Q$-modules.
\end{proof}

\begin{example}
Let $\mathcal{G}$ be a graph of groups, $\pi_1(\mathcal{G})$ be the fundamental group of $\mathcal{G}$, $\{G_v\}_{v\in V\mathcal{G}}$ be the collection of vertex subgroups, and $\{G_e\}_{e\in E\mathcal{G}}$ be the collection of edge subgroups. By \cite[Example 4.12]{dahmani2017hyperbolically}, $\{G_v\}_{v\in V\mathcal{G}}\hookrightarrow_{wh}(\pi_1(\mathcal{G}),X)$ provided that the subset $X\subset \pi_1(\mathcal{G})$ consists of stable letters (i.e., generators corresponding to edges of $\mathcal{G}\setminus T\mathcal{G}$, where $T\mathcal{G}$ is a spanning tree of $\mathcal{G}$), and the corresponding relative metric on a vertex group $G_v$ corresponding to a vertex $v\in V\mathcal{G}$ is bi-Lipschitz equivalent to the word metric with respect to the union of the edge subgroups of $G_v$ corresponding to edges incident to $v$. Thus, we have the following corollary to Theorems \ref{Dehn filling}, \ref{main} and Proposition \ref{genmodule}.
\end{example}

\begin{corollary}\label{graph of groups}
Let $\mathcal{G}$ be a graph of groups, $\pi_1(\mathcal{G})$ be the fundamental group of $\mathcal{G}$, $\{G_v\}_{v\in V\mathcal{G}}$ be the collection of vertex subgroups, and $\{G_e\}_{e\in E\mathcal{G}}$ be the collection of edge subgroups. Suppose that, for every $v\in V\mathcal{G}$, $N_v$ is normal subgroup of $G_v$ with
$$N_v\cap\langle G_e, v\in e \rangle=\emptyset.$$
Then the triple $(G,\{G_v\}_{v\in V\mathcal{G}},\{N_v\}_{v\in V\mathcal{G}})$ has the Cohen-Lyndon property, and $Rel(G,\ll\mathcal{N}\rr)\cong \bigoplus Ind^Q_{Q_v}Rel(G_v,N_v)$ as $\mathbb{Z}Q$-modules, where $\mathcal{N}=\bigcup_{v\in V\mathcal{G}}N_v,Q=G/\ll\mathcal{N}\rr$, and $Q_v=G_v/N_v$ for $v\in V\mathcal{G}$.
\end{corollary}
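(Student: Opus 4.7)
The plan is to show that the Corollary reduces immediately to Theorem \ref{main} and Proposition \ref{genmodule}, with the hypothesis $N_v \cap \langle G_e, v\in e\rangle = \emptyset$ being precisely what is needed to supply the depth requirement. By the example cited from \cite{dahmani2017hyperbolically} and recalled in the paragraph preceding the Corollary, the family $\{G_v\}_{v\in V\mathcal{G}}$ weakly hyperbolically embeds into $(\pi_1(\mathcal{G}),X)$ when $X$ is the collection of stable letters associated to a spanning tree of $\mathcal{G}$, and the relative metric $\widehat{d}_v$ on each vertex group $G_v$ is bi-Lipschitz equivalent to the word metric on $G_v$ with respect to the generating set $\bigcup_{v\in e}G_e$. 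In particular, $\widehat{d}_v(1,n)$ is finite if and only if $n$ belongs to the subgroup $\langle G_e, v\in e\rangle$ of $G_v$.

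Reading the hypothesis as saying that $N_v$ and $\langle G_e, v\in e\rangle$ intersect only in the identity (the literal emptiness of $N_v\cap\langle G_e, v\in e\rangle$ being vacuous, since both contain $1$), we conclude that $\widehat{d}_v(1,n)=+\infty$ for every $v\in V\mathcal{G}$ and every $n\in N_v\setminus\{1\}$. Hence condition (24D) of Section \ref{proof} is trivially satisfied, and the remark immediately following the proof of Proposition \ref{free} yields the Cohen-Lyndon property for $(\pi_1(\mathcal{G}),\{G_v\}_{v\in V\mathcal{G}},\{N_v\}_{v\in V\mathcal{G}})$; equivalently, Theorem \ref{main} applies with no further work.

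With the Cohen-Lyndon property in hand, the module decomposition is an immediate application of Proposition \ref{genmodule}, after one technicality: Proposition \ref{genmodule} requires $N_v\neq\{1\}$ for all $v$. This is handled by the remark following Proposition \ref{genmodule}: restrict to the subset $\Lambda'=\{v\in V\mathcal{G}:N_v\neq\{1\}\}$, apply Proposition \ref{genmodule} to $(\pi_1(\mathcal{G}),\{G_v\}_{v\in\Lambda'},\{N_v\}_{v\in\Lambda'})$, and observe that the omitted summands satisfy $Rel(G_v,N_v)=0$ and so contribute trivially to the direct sum, producing the formula as stated.

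The only conceptual step here is recognizing that the combinatorial condition $N_v\cap\langle G_e, v\in e\rangle=\{1\}$ forces the relative-metric depth condition to hold vacuously; the rest is a direct invocation of the two main results of the paper. There is no real obstacle, since Theorem \ref{main} and Proposition \ref{genmodule} already do all the work, and the bi-Lipschitz equivalence recorded in the cited example is exactly the translation between the hypothesis and the depth condition.
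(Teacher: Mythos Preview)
Your proposal is correct and follows essentially the same approach as the paper, which simply states that the corollary follows from Theorems \ref{Dehn filling}, \ref{main}, and Proposition \ref{genmodule} via the cited example. You have spelled out the key translation step (that the bi-Lipschitz equivalence forces $\widehat{d}_v(1,n)=+\infty$ for $n\in N_v\setminus\{1\}$, so the depth condition holds vacuously) and handled the $N_v=\{1\}$ technicality explicitly, both of which the paper leaves implicit.
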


In particular,

\begin{corollary}\label{amalproduct}
Let $G=A\ast_C B$ be an amalgamated free product. If $N\lhd A$ and $N\cap C=\{1\}$, then $(G,A,N)$ has the Cohen-Lyndon property, and $Rel(G,\ll N \rr)\cong Ind^Q_R Rel(A,N)$ as $\mathbb{Z}Q$-modules, where $Q=G/\ll N \rr$ and $R=A/N$.
\end{corollary}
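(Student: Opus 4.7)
The plan is to realize $G = A \ast_C B$ as the fundamental group of a graph of groups and invoke Corollary~\ref{graph of groups}. First I would let $\mathcal{G}$ be the graph of groups with vertex set $V\mathcal{G} = \{v_A, v_B\}$, a single edge $e$ joining $v_A$ to $v_B$, vertex groups $G_{v_A} = A$ and $G_{v_B} = B$, and edge group $G_e = C$, so that $\pi_1(\mathcal{G}) = A \ast_C B = G$. I would then assign normal subgroups $N_{v_A} = N$ and $N_{v_B} = \{1\}$; with these choices $\mathcal{N} = \bigcup_v N_v = N$ and consequently $\ll \mathcal{N} \rr = \ll N \rr$.

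Next I would check the hypothesis of Corollary~\ref{graph of groups}. For the vertex $v_A$, the only incident edge is $e$, so the subgroup generated by the edge groups incident to $v_A$ is exactly $C$; the required condition $N_{v_A} \cap C = \{1\}$ is precisely the standing assumption $N \cap C = \{1\}$. For the vertex $v_B$, the assignment $N_{v_B} = \{1\}$ makes the corresponding condition vacuous.

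Applying Corollary~\ref{graph of groups} would then supply transversals $T_{v_A} \in LT(A \ll N \rr, G)$ and $T_{v_B} \in LT(B \ll N \rr, G)$ with
\[
\ll N \rr \;=\; \prod_{t \in T_{v_A}}^{\ast} N^t \,\ast\, \prod_{t \in T_{v_B}}^{\ast} \{1\}^t \;=\; \prod_{t \in T_{v_A}}^{\ast} N^t,
\]
which is exactly the Cohen-Lyndon property for the triple $(G, A, N)$. For the module statement, I would either invoke the module conclusion of Corollary~\ref{graph of groups} directly and observe that the summand $Ind^Q_{Q_{v_B}} Rel(B, \{1\})$ vanishes because $Rel(B, \{1\}) = 0$, or equivalently apply Proposition~\ref{genmodule} to the singleton family $(G, \{A\}, \{N\})$ as permitted by the remark following it; in either case one obtains $Rel(G, \ll N \rr) \cong Ind^Q_R Rel(A, N)$ as $\mathbb{Z}Q$-modules.

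There is no substantive obstacle here, as the whole argument is a specialization of Corollary~\ref{graph of groups}. The only point to be careful about is confirming that the trivial choice $N_{v_B} = \{1\}$ is admissible: this is fine because the trivial normal subgroup is vacuously ``sufficiently deep'' in the sense required by Theorem~\ref{main}, and the trivial-factor situation for the module statement is handled cleanly by the remark after Proposition~\ref{genmodule}.
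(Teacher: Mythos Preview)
Your proposal is correct and follows essentially the same route as the paper: the paper states Corollary~\ref{amalproduct} with the words ``In particular,'' immediately after Corollary~\ref{graph of groups}, so it is precisely the specialization you carry out (segment graph with vertex groups $A,B$, edge group $C$, and $N_{v_B}=\{1\}$). Your handling of the trivial normal subgroup---dropping the vanishing free factors for the Cohen--Lyndon property and invoking the remark after Proposition~\ref{genmodule} (or the vanishing of $Rel(B,\{1\})$) for the module statement---is exactly the right bookkeeping.
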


\begin{corollary}\label{HNN}
Let $G=H\ast_t$ be an HNN-extension with associated subgroups $A,B\leqslant H$. If $N\lhd H$ and $N\cap(A\cup B)=\{1\}$, then $(G,H,N)$ has the Cohen-Lyndon property, and $Rel(G,\ll N \rr)\cong Ind^Q_R Rel(H,N)$ as $\mathbb{Z}Q$-modules, where $Q=G/\ll N \rr$ and $R=H/N$.
\end{corollary}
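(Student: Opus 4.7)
The plan is to recognize Corollary~\ref{HNN} as a direct specialization of Corollary~\ref{graph of groups}, and simply verify that all the hypotheses translate correctly under this specialization.

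First I would realize the HNN-extension $G = H\ast_t$ with associated subgroups $A,B\leqslant H$ as the fundamental group $\pi_1(\mathcal{G})$ of the graph of groups $\mathcal{G}$ that has exactly one vertex $v$ and one loop edge $e$, where the vertex group is $G_v=H$, the edge group is $G_e\cong A$, and the two boundary monomorphisms $G_e\to G_v$ send $G_e$ onto $A$ and onto $B$ respectively (with the stable letter $t$ conjugating $A$ to $B$). Under this identification, the set of edge subgroups of $G_v$ coming from edges incident to $v$ is precisely $\{A, B\}$, so the quantity $\langle G_e,\,v\in e\rangle$ appearing in Corollary~\ref{graph of groups} is the union (or subgroup generated by) $A\cup B$ inside $H$.

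Next I would verify that the hypothesis of Corollary~\ref{graph of groups} holds: we are given $N\lhd H=G_v$ with $N\cap (A\cup B)=\{1\}$, which matches the avoidance condition on edge subgroups required there (the condition should be read as trivial intersection of $N_v$ with every edge subgroup incident to $v$). Applying Corollary~\ref{graph of groups} directly with the singleton indexing $V\mathcal{G}=\{v\}$ and $N_v=N$ yields simultaneously (i) the Cohen-Lyndon property of the triple $(G,\{H\},\{N\})$, which is by definition the Cohen-Lyndon property of $(G,H,N)$, and (ii) the $\mathbb{Z}Q$-module isomorphism $Rel(G,\ll N \rr)\cong Ind^Q_R Rel(H,N)$, since the direct sum in the conclusion of Corollary~\ref{graph of groups} collapses to the single summand indexed by $v$, and $Q_v=H/N=R$.

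Since everything is a direct specialization, there is essentially no obstacle; the only point requiring a moment of attention is the dictionary between the HNN presentation and the graph-of-groups picture, specifically that the two associated subgroups $A$ and $B$ correspond to the two ends of the single loop edge at $v$, so that their union is exactly the set of edge subgroups incident to $v$. Once this translation is made explicit, the corollary follows with no further argument.
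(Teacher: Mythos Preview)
Your proposal is correct and matches the paper's approach exactly: the paper presents Corollary~\ref{HNN} immediately after the words ``In particular,'' following Corollary~\ref{graph of groups}, with no further argument, and then remarks that it (together with Corollary~\ref{amalproduct}) can alternatively be deduced from Bass--Serre theory. Your reading of the edge-subgroup condition as trivial intersection with each incident edge group is the intended one, consistent with the parallel condition $N\cap C=\{1\}$ in Corollary~\ref{amalproduct}.
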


Alternatively, Corollary \ref{amalproduct} can be deduced from \cite{karrass1970subgroups} and both of Corollaries \ref{amalproduct},\ref{HNN} can be deduced from the Bass-Serre theory.

\bibliographystyle{alpha}
\bibliography{bin_refs}

\Addresses

\end{document}